\pgfplotsset{
  log ticks with fixed point,
}
\newtheorem{deff}{Definition}[section]
\newtheorem{prop}[deff]{Proposition}
\newtheorem{thm}[deff]{Theorem}
\newtheorem{lem}[deff]{Lemma}
\newtheorem{cor}[deff]{Corollary}
\newtheorem{rmk}[deff]{Remark}
\newcounter{cst}
\newcommand{\ctel}[1]{C_{\refstepcounter{cst}\label{#1}\thecst}}
\newcommand{\cter}[1]{C_{\ref{#1}}}
\def\R{\mathbb{R}}
\def\O{\Omega}
\def\p{\partial}
\def\bs{{\boldsymbol{s}}}
\def\bdt{{\boldsymbol{\dt}}}
\def\div{{\text{div}}}
\def\ov#1{\overline{#1}}
\def\Hh{\mathcal{H}}
\def\grad{\nabla}
\def\d{{\rm d}}
\def\1{\mathbf{1}}
\def\be{\begin{equation}}
\def\ee{\end{equation}}
\def\0{{\bf 0}}
\def\sig{\sigma}
\def\Aa{\mathtt{A}}
\def\Bb{\mathtt{B}}
\def\Dd{\mathtt{D}}
\def\Ee{\mathscr{E}}
\def\Hh{\mathcal{H}}
\def\Jj{\mathfrak{J}}
\def\Rr{\mathtt{R}}
\def\Ss{\mathcal{S}}
\def\Tt{\mathscr{T}}
\def\Eee{\mathfrak{E}}
\def\bp{{\boldsymbol p}}
\def\bu{{\boldsymbol u}}
\def\wt#1{\widetilde{#1}}
\def\fs{\mathfrak{s}}
\def\fp{\mathfrak{p}}
\def\dt{{\Delta t}}
\def\dtn{{\Delta t^n}}
\def\dummys{\varsigma}
\def\dummyp{\pi}
\def\phead{\vartheta}
\def\size{h}
\def\regul{\zeta}
\def\measure{\nu}
\def\scheme{\Hh}
\def\fntest{\varphi}
\def\Energy{\text{\EUR{}}}
\def\kirsqr{\Theta}
\def\kircho{\Phi}
\def\kirmin{\Upsilon}
\def\wh#1{\widehat{#1}}
\def\iint{\int\!\!\!\int}
\pgfplotsset{compat=1.15}
\numberwithin{equation}{section}
\begin{document}

\title{Upstream mobility Finite Volumes for the Richards equation in heterogenous domains}
\author{Sabrina \textsc{Bassetto}\thanks{IFP Energies nouvelles, 1 et 4 avenue de Bois Pr\'eau, 92852 Rueil-Malmaison Cedex, France. \texttt{sabrina.bassetto@ifpen.fr}, \texttt{guillaume.enchery@ifpen.fr}, \texttt{quang-huy.tran@ifpen.fr}}
\and
Cl\'ement \textsc{Canc\`es}\thanks{Inria, Univ. Lille, CNRS, UMR 8524 -- Laboratoire Paul Painlev\'e, 59000 Lille, France. \texttt{clement.cances@inria.fr}}
\and
Guillaume \textsc{Ench\'ery}${}^*$
\and
Quang-Huy \textsc{Tran}${}^*$
}

\date{}
\maketitle

\begin{abstract}
This paper is concerned with the Richards equation in a heterogeneous domain, each subdomain of which is homogeneous and represents a rocktype. Our first contribution is to rigorously prove convergence toward a weak solution of cell-centered finite-volume schemes with upstream mobility and without Kirchhoff's transform. Our second contribution is to numerically demonstrate the relevance of locally refining the grid at the interface between subregions, where discontinuities occur, in order to preserve an acceptable accuracy for the results computed with the schemes under consideration.
\end{abstract}
{\small
\begin{center}
\textbf{Keywords}\medskip\\
Richards' equation, heterogeneous domains, finite-volume schemes, mobility upwinding
\end{center}
\begin{center}
\textbf{Mathematics subject classification}\medskip\\
65M08, 65M12, 76S05
\end{center}
}

\section{Presentation of the continuous model}
The Richards equation \cite{Richards31} is one of the most well-known simplified models for water filtration in unsaturated soils. While it has been extensively studied in the case of a homogeneous domain, the heterogeneous case seems to have received less attention in the literature, at least from the numerical perspective. The purpose of this paper is to investigate a class of discretization scheme for a special instance of heterogeneous domains, namely, those with piecewise-uniform physical properties.

Before stating our objectives in a precise manner, a few prerequisites must be introduced regarding the model in \S\ref{sse:richards}--\S\ref{ssec:cont.stability} and the scheme in \S\ref{ssec:mesh}--\S\ref{ssec:scheme}. The goal of the paper is fully described in \S\ref{sse:goals}, in relation with other works. 
Practical aspects related to the numerical resolution are detailed in \S\ref{sec:numerics} and results on illustrative test cases are shown in \S\ref{sec:results}. 
A summary of our main results is provided in \S\ref{ssec:main}, together with the outline of the paper.

\subsection{Richards' equation in heterogeneous porous media}\label{sse:richards}
Let $\O \subset \R^d$, where $d\in \{2,3\}$, be a connected open polyhedral domain with Lipschitz boundary $\p\O$. A porous medium defined over the region $\O$ is characterized by
\begin{itemize}[noitemsep]
\item[--] the porosity $\phi : \O \rightarrow (0,1]$;
\item[--] the permeability $\lambda : \O \rightarrow \R_+^*$;
\item[--] the mobility function $\eta : [0,1]\times \O \rightarrow \R_+$;
\item[--] the capillary pressure law $\Ss : \R \times \O \rightarrow [0,1]$.
\end{itemize}
The conditions to be satisfied by $\phi$, $\lambda$, $\eta$ and $\Ss$ will be elaborated on later.
In a homogeneous medium, these physical properties are uniform over $\O$, i.e.,
\[
\phi(x) = \phi_0, \qquad \lambda(x) = \lambda_0, \qquad \eta(s,x)= \eta_0(s),
\qquad \Ss(p,x) = \Ss_0(p)
\]
for all $x\in \O$. In a heterogeneous medium, the dependence of $\phi$, $\lambda$, $\eta$ and $\Ss$ on $x$ must naturally be taken into account. The quantity $s$, called saturation, measures the relative volumic presence of water in the medium.  The quantity $p$ is the pressure.

Let $T>0$ be a finite time horizon. We designate by $Q_T = (0,T) \times \O$ the space-time domain of interest. Our task is to find the saturation field $s : Q_T \rightarrow [0,1]$ and the pressure field $p : Q_T \rightarrow \R$ so as to satisfy
\begin{itemize}[noitemsep]
\item the interior equations
\begin{subequations}\label{eq:bigsyst}
\begin{alignat}{2}
\phi(x) \, \p_t s + \div\, F & = 0 & \qquad & \text{in}\; Q_{T}, \label{eq:massbal}\\
F + \lambda(x) \,\eta(s,x) \, \grad (p - \varrho g\cdot x) & = 0 & \qquad & \text{in}\; Q_{T},\label{eq:darcymuskat}\\
s - \Ss(p,x) & = 0 & \qquad & \text{in}\;  Q_{T} ; \label{eq:spcapillar}
\end{alignat}
\item the boundary conditions
\begin{alignat}{2}
F\cdot n(x) & = 0 &\qquad & \text{on} \; (0,T) \times \Gamma^{\rm N}, \label{eq:cont.Neumann}\\
p(t,x) & = p^{\rm D}(x) & \qquad & \text{on}\; (0,T)\times \Gamma^{\rm D}; \label{eq:cont.Dirichlet}
\end{alignat}
\item the initial data
\begin{alignat}{2}
s(0,x) & = s^0(x) & \qquad & \text{in} \; \O. \label{eq:cont.init}
\end{alignat}
\end{subequations}
\end{itemize}

The partial differential equation \eqref{eq:massbal} expresses the water volume balance. The flux $F$ involved in this balance is given by the Darcy-Muskat law \eqref{eq:darcymuskat}, in which $g$ is the gravity vector and $\varrho$ is the known constant density of water, assumed to be incompressible. It is convenient to introduce
\begin{equation}
\psi = - \varrho g\cdot x, \qquad \phead = p+ \psi,
\end{equation}
referred to respectively as gravity potential and hydraulic head. In this way, the Darcy-Muskat law \eqref{eq:darcymuskat} can be rewritten as
\[
F + \lambda(x) \,\eta(s,x) \, \grad (p + \psi)  = F + \lambda(x) \,\eta(s,x) \, \grad \phead = 0 . 
\]
Equation \eqref{eq:spcapillar} connecting the saturation $s$ and the pressure $p$ is the capillary pressure relation. The boundary $\p\O$ is split into two non-overlapping parts, viz., 
\be
\p\O = \Gamma^{\rm N} \cup \Gamma^{\rm D}, \qquad
\Gamma^{\rm N} \cap \Gamma^{\rm D} = \emptyset,
\ee
where $\Gamma^{\rm N}$ is open and $\Gamma^{\rm D}$ is closed, the latter having a positive $(d-1)$-dimensional Hausdorff measure $\measure^{d-1}(\Gamma^{\rm D})>0$. The no-flux Neumann condition \eqref{eq:cont.Neumann} is prescribed on $(0,T)\times\Gamma^{\rm N}$, where $n(x)$ is the outward normal unit vector at $x\in \Gamma^{\rm N}$.
The Dirichlet condition \eqref{eq:cont.Dirichlet} with a known Lipschitz function $p^{\rm D}  \in W^{1,\infty}(\O)$ is imposed on $(0,T)\times\Gamma^{\rm D}$. Note that, in our theoretical development, the function $p^{\rm D}$ is assumed to be defined over the whole domain $\O$, which is stronger than a data $p^{\rm D}\in L^{\infty}(\Gamma^{\rm D})$ given only on the boundary. The assumption that $p^{\rm D}$ does not depend on time can be removed by following the lines of~\cite{CNV}, but we prefer here not to deal with time-dependent boundary data in order to keep the presentation as simple as possible. Finally, the initial data $s^0 \in L^{\infty}(\Omega ; [0,1])$ in \eqref{eq:cont.init} is also a given data.

In this work, we restrict ourselves to a specific type of heterogeneous media, defined as follows. We assume that the domain $\O$ can be partitioned into several connected polyhedral subdomains $\O_i$, $1 \leq i \leq I$. Technically, this means that if $\Gamma_{i,j}$ denotes the interface between $\Omega_i$ and $\Omega_j$ (which can be empty for some particular choices of $\{i,j\}$), then
\be\label{eq:geom}
\O_i \cap \O_j = \emptyset,  \quad \ov \O_i \cap \ov \O_j = \Gamma_{i,j},  \; \text{if}\; i \neq j,
\quad 
\O = \Big(\bigcup_{1\leq i\leq I} \O_i\Big) \cup \Gamma, 
\ee
with $\Gamma = \bigcup_{i\neq j} \Gamma_{i,j}$. Each of these subdomains corresponds to a distinctive rocktype. Inside each $\O_i$, the physical properties are homogeneous. In other words,
\[
\phi(x) = \phi_i, \qquad \lambda(x) = \lambda_i, \qquad \eta(s,x)= \eta_i(s),
\qquad \Ss(p,x) = \Ss_i(p)
\]
for all $x\in \O_i$. Therefore, system \eqref{eq:bigsyst} is associated with
\begin{subequations}\label{eq:subdomdata}
\begin{alignat}{2}
\phi(x) & = \sum_{1\leq i\leq I} \phi_i \, \1_{\O_i}(x), 
 & \qquad \eta(s,x) & = \sum_{1\leq i\leq I} \eta_i(s) \, \1_{\O_i}(x), \label{eq:phieta}\\
\lambda(x) & = \sum_{1\leq i\leq I} \lambda_i \, \1_{\O_i}(x),
 & \qquad \Ss(p,x) & = \sum_{1\leq i\leq I} \Ss_i(p) \, \1_{\O_i}(x), \label{eq:lambdaSs}
\end{alignat}
\end{subequations}
where $\1_{\O_i}$ stands for the characteristic function of $\O_i$. For all $i\in\{1,\ldots, I\}$, we assume that $\phi_i \in (0,1]$ and $\lambda_i >0$. Furthermore, we require that
\begin{subequations}\label{eq:EtaSs}
\be\label{eq:Eta.1}
\eta_i \text{ is increasing on } [0,1], \qquad
\eta_i(0)= 0, \qquad \eta_i(1) = \frac{1}{\mu},
\ee
where $\mu>0$ is the (known) viscosity of water. In addition to the assumption that $\Ss(\cdot,x)$, defined in \eqref{eq:lambdaSs}, is absolutley continuous and nondecreasing, the functions $\Ss_i$ are also subject to some generic requirements commonly verified the models available in the literature: for each $i\in\{1,\ldots , I\}$, there exists $\ov p_i \leq 0$ such that 
\be\label{eq:Ss.1}
\Ss_i \text{ is increasing on } (-\infty, \ov p_i], \qquad 
\lim_{p\to-\infty} \Ss_i(p)= 0, \qquad \Ss_i \equiv 1 \text{ on } [\ov p_i, +\infty).
\ee
\end{subequations}
This allows us to define an inverse $\Ss_i^{-1}: (0,1] \to (-\infty, \ov p_i]$ such that $\Ss_i \circ \Ss_i^{-1}(s) = s$ for all $s \in (0,1]$.
We further assume that for all $i \in \{1, \dots, I\}$
the function $\Ss_i$ is bounded in $L^1(\R_-)$, or equivalently, that 
$\Ss_i^{-1} \in L^1(0,1)$. 
It thus makes sense to consider the capillary energy density functions $\Energy_i: \R \times \O_i \to \R_+$ defined by 
\be\label{eq:defenergy}
\Energy_i(s,x) = \int_{\Ss_i(p^{\rm D}(x))}^{s} \phi_i (\Ss_i^{-1}(\dummys)- p^{\rm D}(x)) \, \d \dummys.
\ee
For all $x \in \O_i$, the function $\Energy_i(\cdot, x)$ is nonnegative, convex since $\Ss_i^{-1}$ is monotone, 
and bounded on $[0,1]$ as a consequence of the integrability of $\Ss_i$. For technical reasons that will 
appear clearly later on, we further assume that
\be\label{eq:hyp.xi}
\sqrt{\eta_i \circ \Ss_i} \in L^1(\R_-), \qquad \forall i \in \{1,\dots, I\}.
\ee

Let $Q_{i,T} = (0,T)\times \Omega_i$ be the space-time subdomains for $1\leq i\leq I$. The interior equations \eqref{eq:massbal}--\eqref{eq:spcapillar} then boil down to

\begin{subequations}\label{eq:locsyst}
\begin{alignat}{2}
\phi_i \, \p_t s + \div\, F & = 0 & \qquad & \text{in}\; Q_{i,T}, \label{eq:cont.mass}\\
F + \lambda_i \,\eta_i \, \grad (p + \psi) & = 0 & \qquad & \text{in}\; Q_{i,T},\label{eq:cont.flux}\\
s - \Ss_i(p) & = 0 & \qquad & \text{in}\;  Q_{i,T} . \label{eq:Ss}
\end{alignat}
\end{subequations}
At the interface $\Gamma_{i,j}$ between $\O_i$ and $\O_j$, $i\neq j$, any solution of \eqref{eq:massbal}--\eqref{eq:spcapillar} satisfies the matching conditions
\begin{subequations}\label{eq:matchcond}
\begin{alignat}{2}
F_i\cdot n_i + F_j\cdot n_j &= 0 & \qquad & \text{on}\; (0,T)\times \Gamma_{i,j}, \label{eq:cont.flux.interf}\\
p_i - p_j & = 0 & \qquad & \text{on}\; (0,T)\times \Gamma_{i,j} . \label{eq:cont.p.interf}
\end{alignat}
\end{subequations}
In the continuity of the normal fluxes \eqref{eq:cont.flux.interf}, which is enforced by the conservation of water volume, $n_i$ denotes the outward normal to $\p\Omega_i$ and  
$F_i\cdot n_i$ stands for the trace of the normal component of $F_{|_{Q_{i,T}}}$ on $(0,T)\times\p\O_i$.
In the continuity of pressure \eqref{eq:cont.p.interf}, which also results from \eqref{eq:massbal}--\eqref{eq:spcapillar}, $p_i$ denotes the trace on $(0,T)\times \p\O_i$ of the pressure $p_{|_{Q_{i,T}}}$ in the $i$-th domain. 

\subsection{Stability features and notion of weak solutions}\label{ssec:cont.stability}
We wish to give a proper sense to the notion of weak solution for problem \eqref{eq:bigsyst}. To achieve this purpose, we need a few mathematical transformations the definition of which crucially relies on a fundamental energy estimate at the continuous level. The calculations below are aimed at highlighting this energy estimate and will be carried out in a formal way, in constrast to those in the fully discrete setting.

Multiplying~\eqref{eq:cont.mass} by $p-p^{\rm D}$, invoking \eqref{eq:defenergy}, 
integrating over $\O_i$ and summing over $i$, we end up with
\be\label{eq:NRG.0}
\frac{\d}{\d t} \, \sum_{i=1}^I\int_{\O_i} \Energy_i(s,x) \, \d x + \sum_{i=1}^I \int_{\O_i}\div\, F (p- p^{\rm D} ) \, \d x = 0.
\ee
We now integrate by parts the second term. Thanks to the matching conditions 
\eqref{eq:matchcond} and the regularity of $p^{\rm D}$, we obtain 
\[
\Aa := \sum_{i=1}^I \int_{\O_i}\div\, F (p-p^{\rm D}) \, \d x = -  \sum_{i=1}^I \int_{\O_i} F \cdot \grad ( p - p^{\rm D} ) \, \d x.
\]
It follows from the flux value \eqref{eq:cont.flux} that
\begin{align*}
\Aa &  =  \sum_{i=1}^I \int_{\O_i} \lambda_i \eta_i(s) 
\grad ( p +\psi ) \cdot \grad ( p - p^{\rm D} ) \, \d x \\
& \, =  \sum_{i=1}^I \int_{\O_i} \lambda_i \eta_i(s)  |\grad p |^2 \, \d x 
-  \sum_{i=1}^I \int_{\O_i} \lambda_i \eta_i(s) \grad \psi \cdot \grad p^{\rm D} \, \d x \\
& \,+ \; \sum_{i=1}^I \int_{\O_i} \lambda_i \eta_i(s)  \grad p \cdot \grad (\psi - p^{\rm D} ) \, \d x.
\end{align*}
Young's inequality, combined with the boundedness of $\grad p^{\rm D}$, $\grad \psi$, $\lambda$ and $\eta$, yields
\[
\mathtt{A} \, \geq \, \frac12 \sum_{i=1}^I \int_{\O_i} \lambda_i \eta_i(s)  |\grad p |^2 \, \d x  - C
\]
for some $C\geq 0$ depending only on $\lambda$, $\eta$, $\psi$, $\mu$,  $\O$ and $p^{\rm D}$.

Let us define the energy $\Eee: [0,T] \to \R_+$ by
\[
\Eee(t) = \sum_{i=1}^I\int_{\O_i} \Energy_i(s(t,x),x) \, \d x, \qquad 0 \leq t \leq T .
\]
Integrating~\eqref{eq:NRG.0} w.r.t. time results in
\be\label{eq:NRG.1}
\Eee(T) + \frac12 \sum_{i=1}^I \iint_{Q_{i,T}} \lambda_i \eta_i(s)  |\grad p |^2 \, \d x \, \d t \leq \Eee(0) + CT.
\ee
Estimate~\eqref{eq:NRG.1} is the core of our analysis. However, it is difficult to use in its present form since $\eta_i(s) = \eta_i(\Ss_i(p))$ vanishes as $p$ tends to $-\infty$, so that the control of $\grad p$ degenerates. To circumvent this difficulty, we resort to the nonlinear functions (customarily referred to as the Kirchhoff transforms) $\kirsqr_i:\R \to \R$, 
$\kircho_i: \R \to \R$, 
and $\kirmin: \R \times \O \to \R$ respectively defined by
\begin{subequations}
\begin{alignat}{2}
\kirsqr_i(p) & = \int_{0}^p \sqrt{\lambda_i \eta_i \circ\Ss_i(\dummyp)} \, \d \dummyp, & \qquad p & \in \R, \label{eq:xi.def} \\
\kircho_i(p) & = \int_{0}^p {\lambda_i \eta_i \circ\Ss_i(\dummyp)} \, \d \dummyp, & \qquad p & \in \R,\\
\kirmin(p) & = \int_0^p  \min_{1\leq i \leq I}
\sqrt{\lambda_i \eta_i \circ \Ss_i(\dummyp)} \, \d \dummyp, & \qquad  p & \in \R, \label{eq:Upsilon}
\end{alignat}
\end{subequations}
the notion of $\kirmin$ being due to \cite{EEM06}.  
Bearing in mind that $\Eee(T) \geq 0$, estimate~\eqref{eq:NRG.1} implies that 
\be\label{eq:NRG.2}
\sum_{i=1}^I \iint_{Q_{i,T}} |\grad \kirsqr_i(p)|^2 \, \d x \, \d t \leq 2 (\Eee(0) + CT) < +\infty.
\ee
As $\kircho_i \circ \kirsqr_i^{-1}$ is Lipschitz continuous, this also gives rise to a $L^2(Q_{i,T})$-estimate on $\grad \kircho_i(p)$. The functions $\sum_i \kirsqr_i(p)\1_{\O_i}$ and $\sum_i \kircho_i(p)\1_{\O_i}$ are 
in general discontinuous across the interfaces $\Gamma_{i,j}$, 
unlike $\kirmin(p)$. Since the functions $\kirmin\circ \kirsqr_i^{-1}$ are Lipschitz continuous, we can readily infer from \eqref{eq:NRG.2} that
\be\label{eq:NRG.Upsilon}
\iint_{Q_T} |\grad \kirmin(p)|^2 \, \d x \leq C
\ee
for some $C$ depending on $T$, $\O$, $\lVert\grad p^{\rm D}\rVert_\infty$, the $\lVert\Ss_i\rVert_{L^1(\R_-)}$'s and
\[
\ov{\lambda} = \lVert\lambda\rVert_{L^{\infty}(\Omega)} = \max_{1\leq i\leq I}\lambda_i, \qquad
\ov{\eta} = \lVert\eta\rVert_{L^{\infty}(\Omega)} = \max_{1\leq i\leq I} \lVert\eta_i\rVert_{L^{\infty}(\Omega)} = \frac{1}{\mu},
\]
the last equality being due to \eqref{eq:Eta.1}.

Moreover, $\kirmin(p) - \kirmin(p^{\rm D})$ vanishes on $(0,T)\times \Gamma^{\rm D}$. 
Poincar\'e's inequality provides a $L^2(Q_T)$-estimate on $\kirmin(p)$ since 
$\Gamma^{\rm D}$ has positive measure and since $\kirmin(p^{\rm D})$ 
is bounded in $\O$. In view of assumption~\eqref{eq:hyp.xi}, 
the functions $\kirsqr_i$ and $\kirmin$ are bounded on $\R_-$. Besides, for $p\geq 0$, 
$\eta_i\circ \Ss_i(p) = 1/\mu$, so that
$
\kirsqr_i(p) = p \sqrt{\lambda_i/\mu} 
$
and $\kirmin(p) = \min_{1\leq i \leq I} p \sqrt{\lambda_i/\mu}$.
It finally comes that 
\be\label{eq:xi_Upsilon}
\kirsqr_i(p) \leq C (1+\kirmin(p)), \qquad \forall p \in \R, \; 1 \leq i \leq I, 
\ee
from which we infer a $L^2(Q_{i,T})$-estimate on $\kirsqr_i(p)$. Putting
\[
V = \big\{ u \in H^1(\O)\; | \; u_{|_{\Gamma^{\rm D}}} = 0\big\}, 
\]
the above estimates suggest the following notion of weak solution for our problem. 

\begin{deff}\label{def:weak}
A measurable function $p:Q_T \to \R$ is said to be a weak solution to the problem~\eqref{eq:cont.mass}--\eqref{eq:Ss} 
if 
\begin{subequations}
\begin{align}
\kirsqr_i(p) & \in L^2((0,T);H^1(\O_i)), \qquad \text{for } \; 1 \leq i \leq I,\\
\qquad \kirmin(p) - \kirmin(p^{\rm D}) & \in L^2((0,T);V)
\end{align}
and if for all $\fntest \in C^\infty_c([0,T)\times (\O\cup \Gamma^{\rm N}))$, there holds
\be\label{eq:weak.mass}
\iint_{Q_{T}} \phi \, \Ss(p,x) \p_t \fntest \, \d x \, \d t +  \int_{\O} 
\phi \, s^0 \fntest(\cdot, 0) \, \d x 
+ \iint_{Q_{T}} F \cdot \grad \fntest \, \d x \, \d t = 0,
\ee
with
\be\label{eq:weak.flux}
F = -  \grad \kircho_i(p) +  \lambda_i \eta_i(\Ss_i(p)) \, \varrho g \qquad \text{in }\; 
Q_{i,T}, \; 1 \leq i \leq I.
\ee
\end{subequations}
\end{deff}
The expression~\eqref{eq:weak.flux} is a reformulation of the original one~\eqref{eq:cont.flux}
in a quasilinear form which is suitable for analysis, even though the physical meaning
of the Kirchhoff transform $\kircho_i(p)$ is unclear. While the formulation~\eqref{eq:weak.mass} should be thought of as a weak form of~\eqref{eq:cont.mass}, \eqref{eq:cont.flux.interf},  \eqref{eq:cont.init}, and \eqref{eq:cont.Neumann}, the condition $\kirmin(p) - \kirmin(p^{\rm D}) \in L^2((0,T);V)$ contains~\eqref{eq:cont.p.interf} and~\eqref{eq:cont.Dirichlet}.

\subsection{Goal and positioning of the paper}\label{sse:goals}
We are now in a position to clearly state the two objectives of this paper.

The first objective is to put forward a rigorous proof that, for problem \eqref{eq:bigsyst} with heterogeneous data \eqref{eq:subdomdata}, cell-centered finite-volume schemes with upstream mobility such as described in \S\ref{ssec:scheme}, do converge towards a weak solution (in the sense of Definition \ref{def:weak}) as the discretization parameters tend to $0$. Such mathematically assessed convergence results are often dedicated to homogeneous cases: see for instance \cite{AWZ96, EGH99,RPK04} for schemes involving the Kirchhoff transforms for Richards' equation, \cite{Ahmed_M2AN} for a upstream mobility CVFE approximation of Richards' equation in anisotropic domains, \cite{CJ86, CE97, CE01} for schemes for two-phase flows involving the Kirchhoff transform, and \cite{EHM03,GRC_HAL} for upstream mobility schemes for two-phase porous media flows. For flows in highly heterogeneous porous media, rigorous mathematical results have been obtained for schemes involving the introduction of additional interface unknowns and Kirchhoff's transforms (see for instance~\cite{EEM06, NoDEA, FVbarriere, BCH13}), or under the non-physical assumption that the mobilities are strictly positive~\cite{EGHGH01, EGHM14_zamm}. It was established very recently in~\cite{BDMQ_HAL} that cell-centered finite-volumes with (hybrid) upwinding also converge for two-phase flows in heterogeneous domains, but with a specific treatment of the interfaces located at the heterogeneities. Here, the novelty lies in the fact that we do not consider any specific treatment of the interface in the design of the scheme. 

The second objective is of more practical nature. Even though our analysis still holds without any specific treatment of the interface, it is well-known that cell-centered upstream mobility finite-volumes can be inaccurate in the presence of heterogeneities. This observation motivated several contributions (see for instance \cite{EEN98, EEM06, HF08, EGHM14_zamm}) where skeletal (i.e., edge or vertex) unknowns where introduced in order to enforce the continuity of the pressures at the interfaces $\Gamma_{i,j}$. By means of extensive numerical simulations in \S\ref{sec:results}, we will show that without local refinement of the grid at the interface, the method still converges, but with a degraded order. Our ultimate motivation is to propose an approach which consists in adding very thin cells on both sides of the interface before using the cell centered scheme under study. Then the scheme appears to behave better, with first-order accuracy. Moreover, one can still make use of the parametrized cut-Newton method proposed in \cite{BCET_FVCA9} to compute the solution to the nonlinear system corresponding to the scheme. This method appears to be very efficient, while it avoids the possibly difficult construction of compatible parametrizations at the interfaces as in~\cite{BKJMP17, BMQ20, BDMQ_HAL}.

\section{Finite-volume discretization}
The scheme we consider in this paper is based on two-point flux approximation (TPFA) finite-volumes. Hence, it is subject to some restrictions on the mesh~\cite{Tipi, GK_Voronoi}. We first review the requirements on the mesh in \S\ref{ssec:mesh}. Next, we construct the upstream mobility finite-volume scheme for Richards' equation in \S\ref{ssec:scheme}. The main mathematical results of the paper, which are the well-posedness of the nonlinear system corresponding to the scheme and the convergence of the scheme, are then summarized in \S\ref{ssec:main}.

\subsection{Admissible discretization of $Q_T$}\label{ssec:mesh}
Let us start by discretizing w.r.t. space.  
\begin{deff}\label{def:meshAdmissible}
An \emph{admissible mesh of $\O$} is a triplet $(\Tt, \Ee, {(x_K)}_{K\in\Tt})$ such that the following conditions are fulfilled:
\begin{enumerate}[noitemsep]
\item[(i)] Each control volume (or cell) $K\in\Tt$ is non-empty, open, polyhedral and convex, with positive $d$-dimensional Lebesgue measure $m_K > 0$. We assume that 
\[
K \cap L = \emptyset \quad \text{if}\; K, L \in \Tt \; \text{with}\; K \neq L, 
\qquad \text{while}\quad \bigcup_{K\in\Tt}\ov K = \ov \O. 
\]
Moreover, we assume that the mesh is adapted to the heterogeneities of $\O$, in the sense that for all $K \in \Tt$, there exists $i\in \{1,\dots, I\}$ such that $K \subset \O_i$.
\item[(ii)] Each face $\sig \in \Ee$ is closed and is contained in a hyperplane of $\R^d$, with positive $(d-1)$-dimensional Hausdorff measure $\measure^{d-1}(\sig)= m_\sig  >0$. We assume that $\measure^{d-1}(\sig \cap \sig') = 0$ for $\sig, \sig' \in \Ee$ unless $\sig' = \sig$. For all $K \in \Tt$, we assume that there exists a subset $\Ee_K$ of $\Ee$ such that $\p K =  \bigcup_{\sig \in \Ee_K} \sig$. Moreover, we suppose that $\bigcup_{K\in\Tt} \Ee_K = \Ee$. Given two distinct control volumes $K,L\in\Tt$, the intersection $\ov K \cap \ov L$ either reduces to a single face $\sig  \in \Ee$ denoted by $K|L$, or its $(d-1)$-dimensional Hausdorff measure is $0$. 
\item[(iii)] The cell-centers $(x_K)_{K\in\Tt}$ are pairwise distinct with $x_K \in K$, and are such that, if $K, L \in \Tt$ share a face $K|L$, then the vector $x_L-x_K$ is orthogonal to $K|L$.
\item[(iv)] For the boundary faces $\sig \subset \p\O$, we assume that either $\sig \subset \Gamma^{\rm D}$ or $\sig \subset \ov \Gamma{}^{\rm N}$. For $\sig \subset \p\O$ with $\sig \in \Ee_K$ for some $K\in \Tt$, we assume additionally that there exists $x_\sig \in \sig$ such that $x_\sig - x_K$ is orthogonal to $\sig$.
\end{enumerate} 
\end{deff}

In our problem, the standard Definition \ref{def:meshAdmissible} must be supplemented by a compatibility property between the mesh and the subdomains. By ``compatbility'' we mean that each cell must lie entirely inside a single subregion. Put another way, 
\be\label{eq:indexK}
\forall K\in\Tt, \quad \exists ! \; i(K)\in \{ 1, \ldots, I\} \;\; | \;\; K\subset \O_{i(K)} .
\ee
This has two consequences. The first one is that, if we define 
\be
\Tt_i = \{K\in \Tt\; | \; K \subset \O_i\}, \qquad 1 \leq i \leq I, 
\ee
then $\Tt = \bigcup_{i=1}^I \Tt_i$. The second one is that the subdomain interfaces $\Gamma_{i,j}$ for $i\neq j$ coincide necessarily with some edges $\sig \in \Ee$. To express this more accurately, let $\Ee_\Gamma = \{\sig \in \Ee\; | \; \sig \subset \Gamma\}$ be the set of the interface edges,  $\Ee_{\rm ext}^{\rm D} =  \{\sig \in \Ee\; | \; \sig \subset \Gamma^{\rm D}\}$ be the set of Dirichlet boundary edges, and 
$\Ee_{\rm ext}^{\rm N} =  \{\sig \in \Ee\; | \; \sig \subset \ov \Gamma{}^{\rm N}\}$ be the set of Neumann boundary edges. Then,
$\Gamma = \bigcup_{\sig \in \Ee_\Gamma} \sig$, while 
$\Gamma^{\rm D}=  \bigcup_{\sig \in \Ee_{\rm ext}^{\rm D}} \sig$ and 
$\ov \Gamma{}^{\rm N}=  \bigcup_{\sig \in \Ee_{\rm ext}^{\rm N}} \sig$.
For later use, it is also convenient to introduce the subset $\Ee_i \subset\Ee$ consisting of those edges that correspond to cells in $\Tt_i$ only, i.e.,
\begin{subequations}
\be\label{eq:EdgesOfI}
\Ee_i = \bigg(\bigcup_{K\in\Tt_i} \Ee_K\bigg)\setminus \Ee_\Gamma, \qquad 1 \leq i \leq I, 
\ee
and the subset $\Ee_{\rm int}$ of the internal edges, i.e., 
\be
\Ee_{\rm int} = \Ee \setminus(\Ee_{\rm ext}^{\rm D} \cup \Ee_{\rm ext}^{\rm N}) = \bigcup_{K,L\in \Tt} \{ \sig = K|L\}.
\ee
\end{subequations}
Note that $\Ee_\Gamma \subset \Ee_{\rm int}$. 

To each edge $\sig\in\Ee$, we associate a distance $d_\sig$ by setting 
\be\label{eq:distance}
d_\sig = \begin{cases}
\, |x_K - x_L| & \text{if}\; \sig = K|L \in \Ee_{\rm int}, \\
\, |x_K - x_\sig| & \text{if}\; \sig \in \Ee_K \cap (\Ee_{\rm ext}^{\rm D} \cup \Ee_{\rm ext}^{\rm N}).
\end{cases}
\ee
We also define $d_{K\sig} = {\rm dist}(x_K,\sig)$ for all $K \in \Tt$ and $\sig \in \Ee_K$. The transmissivity of the edge $\sig\in \Ee$ is defined by
\be\label{eq:transmissive}
a_\sig = \frac{m_\sig}{d_\sig}.
\ee

Throughout the paper, many discrete quantities $\bu$ will be defined either in cells $K\in\Tt$ or on Dirichlet boundary edges $\sig \in \Ee_{\rm ext}^{\rm D}$, i.e. 
$\bu = ( \left(u_K\right)_{K\in\Tt}, \left(u_\sig\right)_{\sig \in \Ee_{\rm ext}^{\rm D}} ) \in \mathbb{X}^{\Tt\cup\Ee_{\rm ext}^{\rm D}}$,
where $\mathbb{X}$ can be either $\R^\ell$, $\ell \geq 1$, or a space of functions. 
Then for all $K\in\Tt$ and $\sig \in \Ee_K$, we define the mirror value $u_{K\sig}$ by 
\be\label{eq:mirror}
u_{K\sig} = \begin{cases}
\, u_L & \text{if}\; \sig = K|L \in \Ee_{\rm int}, \\
\, u_K &\text{if}\; \sig \in \Ee_K \cap \Ee_{\rm ext}^{\rm N}, \\ 
\, u_\sig &\text{if}\; \sig \in \Ee_K \cap \Ee_{\rm ext}^{\rm D}.
\end{cases}
\ee

The diamond cell $\Delta_\sig$ corresponding to the edge $\sig$ is defined as the convex hull of 
$\{x_K, x_{K\sig},\sig\}$ for $K$ such that $\sig \in \Ee_K$, while the half-diamond cell 
$\Delta_{K\sig}$ is defined as the convex hull of $\{x_K,\sig\}$. Denoting by $m_{\Delta_\sig}$ 
the Lebesgue measure of $\Delta_\sig$, the elementary geometrical relation 
$m_{\Delta_\sig} = d\, m_\sig d_\sig$ where $d$ stands for the dimension will be used many times in what follows.

Another notational shorthand is worth introducing now, since it will come in handy in the sequel. Let
\begin{subequations}\label{eq:fsubdom}
\be
f(\cdot,x) = \sum_{1\leq i\leq I} f_i(\cdot) \1_{\O_i}(x)
\ee
be a scalar quantity or a function whose dependence of $x\in\O$ is of the type \eqref{eq:subdomdata}. Then, for $K\in\Tt$, we slightly abuse the notations in writing
\be\label{eq:finK}
f_K(\cdot) := f(\cdot,x_K) = f_{i(K)}(\cdot),
\ee
\end{subequations}
where the index $i(K)$ is defined in \eqref{eq:indexK}. The last equality in the above equation holds by virtue of the compatibility property. For example, we will have not only
$\phi_K= \phi(x_K)$, $\lambda_K= \lambda(x_K)$, $\eta_K(s)= \eta(s,x_K)$, $\Ss_K(p) = \Ss(p,x_K)$ but also $\Energy_K(s)=\Energy(s,x_K)$. Likewise, we shall be writing $f_{K\sig}(\cdot) = f(\cdot,x_{K\sig})$ for the mirror cell without any ambiguity: if $\sig\in\Ee_{\rm int}\cup \Ee_{\rm ext}^{\rm N}$, then $x_{K\sig}$ is a cell-center;  if $\sig\in\Ee_{\rm ext}^{\rm D}$, then $x_{K\sig}$ lies on the boundary but does not belong to an interface between subdomains.

The size $\size_\Tt$ and the regularity $\regul_\Tt$ of the mesh are respectively defined by 
\be\label{eq:diamreg}
\size_\Tt = \max_{K\in\Tt} \, {\rm diam}(K), 
\qquad 
\regul_\Tt = \min_{K\in\Tt} \; {\bigg( \frac1{\operatorname{Card}\, \Ee_K}}\,  \min_{\sig \in \Ee_K} \frac{d_{K\sig}}{{\rm diam}(K)} {\bigg)}.
\ee

The time discretization is given by $\left(t^n\right)_{0\leq 1 \leq N}$ with 
$0 = t^0 < t^1 < \dots < t^N = T$. We denote by $\dtn = t^n - t^{n-1}$ for all $n \in \{1,\dots, N\}$ and 
by $\bdt = \left(\dtn\right)_{1\leq n \leq N}$.

\subsection{Upstream mobility TPFA Finite Volume scheme}\label{ssec:scheme}
Given a discrete saturation profile $(s_K^{n-1})_{K\in\Tt} \in [0,1]^\Tt$ at time $t^{n-1}$, $n\in \{1,\dots, N\}$, 
we seek for a discrete pressure profile $(p_K^{n})_{K\in\Tt} \in \R^\Tt$ at time $t^n$ solution to 
the following nonlinear system of equations. 
Taking advantage of the notational shorthand \eqref{eq:finK}, we define
\be\label{eq:scheme.capi}
s_K^n = \Ss_K(p_K^n), \qquad K\in\Tt, \; n\geq 1.
\ee 
The volume balance~\eqref{eq:cont.mass} is then discretized into
\be\label{eq:scheme.mass}
 m_K \phi_K \frac{s_K^n - s_K^{n-1}}{\dt^n} + \sum_{\sig \in \Ee_K} m_\sig F_{K\sig}^n = 0, \qquad K \in \Tt, \; n \geq 1,
\ee
using the approximation
\begin{subequations}
\be\label{eq:scheme.flux}
F_{K\sig}^n = \frac{1}{d_\sig}\lambda_\sig \eta_\sig^n (\phead_K^n  - \phead_{K\sig}^n ), \quad \sig \in \Ee_K, \; K \in \Tt, \; n\geq 1, 
\ee
for the flux \eqref{eq:darcymuskat}, with
\be
\phead_K^n = p_K^n + \psi_K, \qquad \phead_{K\sig}^n = p_{K\sig}^n + \psi_{K\sig} ,
\ee
where the mirror values $p_{K\sig}^n$ and $\psi_{K\sig}$ are given by \eqref{eq:mirror}.
In the numerical flux \eqref{eq:scheme.flux}, the edge permeabilities $(\lambda_\sig)_{\sig \in \Ee}$ are set to
\[
\lambda_\sig = \begin{cases}
\, \displaystyle\frac{\lambda_K \lambda_L d_{\sig}}{\lambda_K d_{L,\sigma}+ \lambda_L d_{K,\sigma}} & \text{if}\; \sig = K|L \in \Ee_{\rm int}, \\
\, \lambda_K & \text{if}\; \sig \in \Ee_{K} \cap \Ee_{\rm ext},  
 \end{cases}
\]
while the edge mobilities are upwinded according to
\be\label{eq:scheme.upwind}
\eta_\sig^n = \begin{cases}
\, \eta_K(s_K^n) & \text{if}\; \phead_K^n > \phead_{K\sig}^n , \\
\frac{1}{2}(\eta_K(s_K^n)+\eta_{K\sig}(s_{K\sig}^n)) &  \text{if}\; \phead_K^n = \phead_{K\sig}^n ,\\
\, \eta_{K\sig}(s_{K\sig}^n) & \text{if}\; \phead_K^n < \phead_{K\sig}^n.
\end{cases}
\ee
\end{subequations}
In practice, the definition of $\eta_\sig^n$ when $\phead_K^n = \phead_{K\sig}^n$ has no influence on the scheme. We choose here to give a symmetric definition that does not depend on the orientation of the edge $\sig$ in order to avoid ambiguities.

The boundary condition $p^{\rm D}$ is discretized into 
\be\label{eq:scheme.pD}
\begin{cases}
\, p_K^{\rm D} =  \frac{1}{m_K} \int_K p^{\rm D}(x) \, \d x & \text{for}\; K \in \Tt, \\
\, p_\sig^{\rm D} = \;\, \frac{1}{m_\sig} \int_\sig p^{\rm D}(x) \, \d \measure^{d-1}(x)&\text{for}\; \sig \in  \Ee_{\rm ext}^{\rm D},  
\end{cases}
\ee
whereas the initial condition is discretized into 
\be\label{eq:scheme.init}
s_K^0 = \frac1{m_K} \int_K s^0(x) \, \d x, \qquad \text{for}\; K \in \Tt. 
\ee
The Dirichlet boundary condition is encoded in the fluxes~\eqref{eq:scheme.flux} by setting 
\be\label{eq:p_Dir}
p_\sig^n = p^{\rm D}_\sig, \qquad \forall \sig \in \Ee_{\rm ext}^{\rm D}, \; n \geq 1.
\ee
Bearing in mind the definition~\eqref{eq:mirror} of the mirror values for $\sig \in \Ee_{\rm ext}^{\rm N}$, 
the no-flux boundary condition across $\sig\in\Ee_{\rm ext}^{\rm N}$ is automatically encoded, i.e., $F_{K\sig}^n = 0$ for all $\sig \in \Ee_{K}\cap \Ee_{\rm ext}^{\rm N}$, $K\in\Tt$ and $n\geq 1$.

In what follows, we denote by $\bp^n = \left(p_K^n\right)_{K\in\Tt}$ for $1 \leq n \leq N$, 
and  by $\bs^n = \left(s_K^n\right)_{K\in\Tt}$ for $0 \leq n \leq N$.
Besides, we set $\bp^{\rm D} = ((p_K^{\rm D})_{K\in\Tt}, (p_\sig^{\rm D})_{\sig\in\Ee^{\rm D}} )$.

\subsection{Main results and organization of the paper}\label{ssec:main}
The theoretical part of this paper includes two main results. The first one, which emerges from the analysis at fixed grid, states that the schemes admits a unique solution $(\bp^n)_{1 \leq n \leq N}$. 
\begin{thm}\label{thm:1}
For all $n\in \{1, \dots, N\}$, there exists a unique solution $\bp^n$ to the scheme~\eqref{eq:scheme.capi}--\eqref{eq:scheme.upwind}. 
\end{thm}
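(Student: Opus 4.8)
The plan is to fix $n\in\{1,\dots,N\}$, regard the scheme \eqref{eq:scheme.capi}--\eqref{eq:scheme.upwind} as a square nonlinear system $\mathcal{F}_n(\bp^n)=\boldsymbol{0}$ for the unknown $\bp^n\in\R^\Tt$ (the Dirichlet values being frozen to $p^{\rm D}_\sigma$ through \eqref{eq:p_Dir} and the Neumann fluxes vanishing automatically), to check that $\mathcal{F}_n$ is continuous, and then to derive existence from a topological degree argument and uniqueness from the monotonicity built into the upstream weighting. Throughout I would work with the hydraulic heads $\phead^n_K=p^n_K+\psi_K$, since they are the quantities compared in \eqref{eq:scheme.upwind}. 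The two structural facts to isolate first are: (a) $p\mapsto\Ss_K(p)$ is nondecreasing, by \eqref{eq:Ss.1}--\eqref{eq:lambdaSs}; (b) for every $K\in\Tt$ and $\sigma\in\Ee_K$, the numerical flux $F^n_{K\sigma}$ of \eqref{eq:scheme.flux}--\eqref{eq:scheme.upwind}, seen as a function of the pair $(p_K,p_{K\sigma})$, is nondecreasing in $p_K$ and nonincreasing in the mirror value $p_{K\sigma}$ — the classical consequence of mobility upwinding together with the fact that $\eta_i\circ\Ss_i\geq 0$ is nondecreasing (\eqref{eq:Eta.1}) — the monotonicity being moreover strict as long as the two pressures stay finite, since the relevant mobility is then positive. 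Hence $\mathcal{F}_n$ is diagonally isotone and off-diagonally antitone.

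For existence I would establish an a priori $L^\infty$ bound on any solution $\bp^n$ — more precisely, on the solutions of whatever homotopy is used in the degree argument, uniformly in the homotopy parameter — by a discrete maximum principle. At a cell $K_0$ maximizing $\phead^n$, all edges are upwinded towards $K_0$, so $\sum_\sigma m_\sigma F^n_{K_0\sigma}\geq 0$; since \eqref{eq:scheme.mass} forces this sum to equal $m_{K_0}\phi_{K_0}(s^{n-1}_{K_0}-s^n_{K_0})/\dtn$ (bounded), and since a large $\phead^n_{K_0}$ would mean $s^n_{K_0}=1$, hence $\eta_{K_0}(s^n_{K_0})=1/\mu$, a large head gap with at least one neighbour would be forced; propagating this along a path through the (connected) mesh down to a Dirichlet edge gives an upper bound $\max_K\phead^n_K\le C$; the discrete counterpart of the energy estimate \eqref{eq:NRG.1}, tested with $(p^n_K-p^{\rm D}_K)_K$, combined with the discrete Poincaré inequality on $V$ and the linear growth of $\kirmin$ at $+\infty$, gives the same conclusion. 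The lower bound is the genuinely delicate point: because $\kirsqr_i$ and $\kirmin$ are bounded on $\R_-$ (assumption \eqref{eq:hyp.xi}), the energy estimate does not prevent $\phead^n_K\to-\infty$, so one argues separately by a ``no dry-out'' principle — at a cell minimizing $\phead^n$ every edge is upwinded towards its neighbour, so an arbitrarily negative $\phead^n_{K_0}$ would make the incoming fluxes arbitrarily negative, again contradicting the bounded right-hand side of \eqref{eq:scheme.mass}; iterating towards a Dirichlet edge, where the mobility is pinned to a positive value by $p^{\rm D}_\sigma$ (recall $\Ss_K(p^{\rm D}_\sigma)>0$), yields a finite lower bound on $\phead^n_K$. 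I expect this lower bound to be the main obstacle. With these uniform bounds in hand, $\mathcal{F}_n$ is homotopic, along a homotopy that preserves the off-diagonal antitone structure, to an affine map of nonzero topological degree; hence a solution exists.

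For uniqueness, let $\bp,\bq$ be two solutions and set $w_K=p_K-q_K$. Summing the difference of the equations \eqref{eq:scheme.mass} over the set $\{K\in\Tt:\,w_K>0\}$ and reorganizing the flux contributions edge by edge, the interior edges with both endpoints in the set cancel, the Neumann fluxes vanish, and every remaining flux term is nonnegative: for an interior edge $\sigma=K|L$ with $K$ in the set and $L$ outside it one has $p_K>q_K$ and $p_L\le q_L$, so the monotonicity of $F^n_{K\sigma}$ in its two arguments makes $F^n_{K\sigma}(\bp)-F^n_{K\sigma}(\bq)\ge 0$; for a Dirichlet edge only the first argument moves, upward. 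The $\Ss_K$-terms being nonnegative as well, everything must vanish; using the symmetric set $\{K:\,w_K<0\}$ too, one gets $\Ss_K(p_K)=\Ss_K(q_K)$ for every $K\in\Tt$, hence $p_K=q_K$ on each cell where $\Ss_K$ is locally strictly increasing, i.e. outside the saturated cells.

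It remains to treat the saturated cells. There the mobility is frozen at $1/\mu$, so $F^n_{K\sigma}$ is an affine function of $(\phead^n_K,\phead^n_{K\sigma})$ and \eqref{eq:scheme.mass} reduces to a linear TPFA elliptic system; examining a cell achieving $\max_K w_K$ and using the strict monotonicity (affinity) of these fluxes, the connectedness of the mesh, and $\measure^{d-1}(\Gamma^{\rm D})>0$ to reach either a Dirichlet edge or a non-saturated cell on which $w$ already vanishes, a discrete maximum principle forces $\max_K w_K\le 0$; the symmetric argument gives $w\ge 0$, whence $\bp=\bq$. I would flag the a priori lower bound of the existence part as the principal difficulty, together with the fact that — as the above shows — neither the energy estimate nor the bare monotone structure suffices on the saturated region, where one genuinely has to combine the sign test with a maximum-principle argument.
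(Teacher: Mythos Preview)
Your high-level plan---topological degree for existence, monotonicity plus a discrete maximum principle for uniqueness---is exactly the paper's, and you are right that the pressure lower bound is the crux. Two steps, however, do not go through as written.

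For the lower bound, the claim that at a minimizer $K_0$ of $\phead^n$ the incoming fluxes ``become arbitrarily negative'' fails: the upwind mobility on each $\sigma\in\Ee_{K_0}$ is $\eta_{K_0\sigma}(s_{K_0\sigma}^n)$, which can itself be arbitrarily small if the neighbours' pressures are also very negative; nothing in the mass balance at $K_0$ prevents the whole mesh from drifting to $-\infty$ simultaneously. The paper's remedy is to first sharpen the energy estimate~\eqref{eq:dissip} by replacing $\eta_\sigma^n$ with $\ov\eta_\sigma^n=\max_{\Jj_\sigma^n}\widecheck\eta_\sigma$, where $\widecheck\eta_\sigma=\min(\eta_K\circ\Ss_K,\eta_{K\sigma}\circ\Ss_{K\sigma})$ is nondecreasing and strictly positive on $\R$ (Lemma~\ref{lem:dissip.max}); since one endpoint of $\Jj_\sigma^n$ is always the known value, the bound $a_\sigma\ov\eta_\sigma^n(p_K^n-p_{K\sigma}^n)^2\le C$ yields, on any edge with one endpoint already bounded below, a finite lower bound on the other. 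Propagation therefore runs \emph{outward from the Dirichlet edges} (where $p_\sigma^{\rm D}$ is prescribed and $\widecheck\eta_\sigma(p_\sigma^{\rm D})>0$ is a fixed number), cell by cell through the connected mesh---not inward from an uncontrolled minimizer.

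For uniqueness on the saturated region, the assertion that ``the mobility is frozen at $1/\mu$, so $F_{K\sigma}^n$ is affine'' is wrong on an edge $K|L$ with $K$ saturated and $L$ unsaturated: there $\eta_\sigma^n=\eta_L(s_L^n)<1/\mu$ whenever $\phead_K^n<\phead_L^n$ and $\eta_\sigma^n=1/\mu$ otherwise, so the flux is only piecewise affine in $\phead_K^n$ and the upwind direction may differ between the two solutions. What your maximum-principle step actually needs is \emph{strict monotonicity} of every flux in each of its arguments, which amounts to strict positivity of all cell mobilities---precisely the pressure lower bound above. The paper does not separate saturated from unsaturated cells: once $s_K^n=\wt s_K^n$ for all $K$, it writes each flux as $\eta_K(s_K^n)(\phead_K^n-\phead_{K\sigma}^n)^+-\eta_{K\sigma}(s_{K\sigma}^n)(\phead_{K\sigma}^n-\phead_K^n)^+$, observes that at a cell maximizing $\phead_K^n-\wt\phead_K^n$ both brackets in the resulting difference are nonnegative, and uses the positivity of \emph{all} mobilities (Lemma~\ref{lem:Boundedness}) to force them to vanish; this makes $\phead^n-\wt\phead{}^n$ constant over $\Tt$, hence zero by the Dirichlet condition.
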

With Theorem~\ref{thm:1} at hand, we define the approximate pressure $p_{\Tt, \dt}$ by
\begin{subequations}
\be\label{eq:approx_sol}
p_{\Tt,\bdt}(t,x) = p_K^n \qquad \text{for}\; (t,x) \in (t^{n-1}, t^n] \times K. 
\ee
We also define the approximate saturation as
\be\label{eq:approx_sat}
s_{\Tt, \bdt} = \Ss(p_{\Tt, \bdt}, x) .
\ee 
\end{subequations}

The second main result guarantees the convergence towards a weak solution of the sequence of approximate solutions as the mesh size and the time steps tend to $0$. Let $(\Tt_m, \Ee_m, \left(x_K\right)_{K\in\Tt_m})_{m\geq 1}$ be 
a sequence of admissible discretizations of the domain $\O$ in the sense of Definition~\ref{def:meshAdmissible} such that 
\be\label{eq:mesh.m}
\size_{\Tt_m} \underset{m\to\infty}\longrightarrow 0, \qquad \sup_{m\geq 1} \; \regul_{\Tt_m}  =: \regul < +\infty, 
\ee
where the size $\size_{\Tt_m}$ and the regularity $\regul_{\Tt_m}$ are defined in \eqref{eq:diamreg}.
Let $(\bdt_m)_{m\geq 1}$ be time discretizations of $(0,T)$ such that 
\be\label{eq:dt.m}
\lim_{m\to \infty} \; \max_{1 \leq n \leq N_m} \dt^n_{m}  = 0.
\ee

\begin{thm}\label{thm:2}
There exists a weak solution $p: Q_T \to \R$ in the sense of Definition~\ref{def:weak} such that, up to a subsequence, 
\begin{subequations}
\begin{alignat}{2}
s_{\Tt_m, \bdt_m} & \underset{m\to\infty} \longrightarrow \Ss(p,x) & \qquad & \text{a.e. in}\; Q_T, 
\\
\kirmin(p_{\Tt_m, \bdt_m}) & \underset{m\to\infty} \longrightarrow \kirmin(p) & \qquad & \text{weakly in}\; L^2(Q_T). 
\end{alignat}
\end{subequations}
\end{thm}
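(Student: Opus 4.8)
The plan is to follow the now-classical compactness strategy for finite-volume schemes applied to degenerate parabolic problems, adapted to the heterogeneous setting. The proof divides into four stages: (i) derive a discrete counterpart of the energy estimate \eqref{eq:NRG.1}, (ii) extract from it the uniform a priori bounds on the approximate solutions needed for compactness, (iii) pass to the limit using a discrete Aubin--Simon type argument and the Minty/monotonicity trick, and (iv) identify the limit as a weak solution in the sense of Definition~\ref{def:weak}.

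\textbf{Step 1: discrete energy estimate.} I would multiply the scheme \eqref{eq:scheme.mass} by $\dt^n(p_K^n - p_K^{\rm D})$, sum over $K\in\Tt$ and over $n$, and mimic the formal computation of \S\ref{ssec:cont.stability}. The accumulation term is handled by convexity of $\Energy_i$: the inequality $\Energy_K(s_K^n) - \Energy_K(s_K^{n-1}) \leq \phi_K(\Ss_K^{-1}(s_K^n) - p_K^{\rm D})(s_K^n - s_K^{n-1})$ telescopes in $n$ and produces $\sum_K m_K \Energy_K(s_K^{N})$ on the left. For the flux term, a discrete integration by parts (Abel summation over edges, using the mirror-value convention \eqref{eq:mirror} and the fact that $F_{K\sig}^n = 0$ on Neumann edges) together with the upwind choice \eqref{eq:scheme.upwind} yields a nonnegative dissipation term of the form $\sum_n \dt^n \sum_\sig a_\sig \lambda_\sig \eta_\sig^n (\phead_K^n - \phead_{K\sig}^n)(p_K^n - p_{K\sig}^n - (p_K^{\rm D}-p_{K\sig}^{\rm D}))$; splitting $p - p^{\rm D} = (p+\psi) - (p^{\rm D}+\psi)$ and using Young's inequality on the cross terms (exactly as in the bound on $\Aa$) one absorbs half of the principal part and is left with a term controlling $\sum_n \dt^n \sum_\sig a_\sig \lambda_\sig \eta_\sig^n (\phead_K^n-\phead_{K\sig}^n)^2$. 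The key algebraic point, specific to upstream mobility without Kirchhoff transform, is that $\eta_\sig^n(\phead_K^n-\phead_{K\sig}^n)^2 \geq (\kirsqr_K(p_K^n) - \kirsqr_{K\sig}(p_{K\sig}^n))^2$ up to harmless lower-order terms coming from the gravity potential $\psi$ --- this is where the definitions \eqref{eq:xi.def} and the monotone-upwinding structure combine. Hence one obtains the discrete analogue of \eqref{eq:NRG.2}: a uniform bound on the discrete $L^2(0,T;H^1(\O_i))$ seminorm of $\kirsqr_i(p_{\Tt,\bdt})$, and (via the Lipschitz functions $\kirmin\circ\kirsqr_i^{-1}$ and a discrete Poincar\'e inequality using $\measure^{d-1}(\Gamma^{\rm D})>0$) a uniform bound on $\kirmin(p_{\Tt,\bdt})$ in the discrete $L^2(0,T;V)$ norm.

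\textbf{Step 2--3: compactness and passage to the limit.} From the discrete $H^1$-type bound on $\kirmin(p_{\Tt_m,\bdt_m})$ I would apply the discrete Rellich theorem of \cite{Tipi} (or the Kolmogorov-type criterion) to get strong $L^2(Q_T)$ compactness in space, and use the scheme \eqref{eq:scheme.mass} to bound the time-translates of $\phi\,\Ss(p_{\Tt,\bdt},x)$, giving via a discrete Aubin--Simon argument strong $L^2$ (hence a.e., up to subsequence) convergence of $s_{\Tt_m,\bdt_m} = \Ss(p_{\Tt_m,\bdt_m},x)$ to some limit. The delicate point is that $p$ itself need not converge --- only the Kirchhoff transforms do; one recovers a limit $p$ by inverting $\kirmin$ where it is strictly increasing and by a careful argument on the (measure-zero in practice, but a priori possibly not) degenerate set where $\eta_i\circ\Ss_i$ vanishes, using that $s = \Ss_i(p)$ is what ultimately matters. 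Once $\kirmin(p_{\Tt_m,\bdt_m}) \rightharpoonup \kirmin(p)$ weakly in $L^2(Q_T)$ and $s_{\Tt_m,\bdt_m}\to \Ss(p,x)$ a.e., I would identify the flux: the discrete fluxes $F_{K\sig}^n$ reconstructed as a vector field converge weakly to $-\grad\kircho_i(p) + \lambda_i\eta_i(\Ss_i(p))\varrho g$ in each $Q_{i,T}$ --- here one uses that $\kircho_i\circ\kirsqr_i^{-1}$ is Lipschitz to upgrade the weak convergence of $\grad\kirsqr_i(p_{\Tt,\bdt})$ into weak convergence of $\grad\kircho_i(p_{\Tt,\bdt})$, together with a.e. convergence of the mobility $\eta_i(\Ss_i(p_{\Tt,\bdt}))$. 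Multiplying the scheme by (cell averages of) a test function $\fntest\in C_c^\infty([0,T)\times(\O\cup\Gamma^{\rm N}))$, summing by parts in space and time, and using consistency of the TPFA fluxes on the admissible mesh (condition (iii) of Definition~\ref{def:meshAdmissible}) together with the regularity bound \eqref{eq:mesh.m}, every term converges to its continuous counterpart in \eqref{eq:weak.mass}; the initial term is handled by \eqref{eq:scheme.init} and $L^1$ convergence of $s_{\Tt,\bdt}^0$ to $s^0$.

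\textbf{Main obstacle.} The hardest part is the identification of the limit flux and the handling of the upstream mobility across the heterogeneity interfaces $\Gamma_{i,j}$ \emph{without} any interface unknown. On an interface edge $\sig = K|L$ with $K\in\Tt_i$, $L\in\Tt_j$, the pressure traces from the two sides do not coincide at the discrete level, and the upwinded mobility $\eta_\sig^n$ is built from $\eta_K = \eta_i$ or $\eta_{L} = \eta_j$ depending on the sign of $\phead_K^n-\phead_L^n$; one must show that in the limit this produces exactly the matching conditions \eqref{eq:matchcond} encoded in $\kirmin(p)-\kirmin(p^{\rm D})\in L^2(0,T;V)$, i.e. that the continuity of $\kirmin(p)$ (the only globally continuous Kirchhoff transform, by \cite{EEM06}) survives the limit while the discontinuities of $\kirsqr_i(p)$, $\kircho_i(p)$ are confined to the interfaces. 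This requires carefully controlling the interface contribution in the discrete energy dissipation --- showing the jump $[\kirsqr_i(p_K^n)]$ across $\Gamma_{i,j}$ is controlled by $\sqrt{d_\sig}$ times a bounded quantity --- and is the technical heart of the paper; the bound \eqref{eq:hyp.xi} guaranteeing boundedness of $\kirsqr_i$ and $\kirmin$ on $\R_-$, and the comparison \eqref{eq:xi_Upsilon}, are precisely what make this interface analysis go through.
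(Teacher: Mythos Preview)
Your overall architecture matches the paper's (energy estimate $\Rightarrow$ uniform bounds on $\kirsqr_i$, $\kirmin$ $\Rightarrow$ compactness via~\cite{ACM17} $\Rightarrow$ identification), but there is a genuine gap at the point you call ``the key algebraic point'' in Step~1, and a related imprecision in the flux identification.

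\textbf{The dissipation inequality does not follow as you state it.} You claim that upstream mobility gives $\eta_\sig^n(\phead_K^n-\phead_{K\sig}^n)^2 \geq (\kirsqr_K(p_K^n)-\kirsqr_{K\sig}(p_{K\sig}^n))^2$ up to gravity-potential remainders. This is false in general: the upwind direction is chosen according to the sign of $\phead_K^n-\phead_{K\sig}^n$, \emph{not} of $p_K^n-p_{K\sig}^n$, and the two signs can differ. When they do, $\eta_\sig^n$ is the \emph{smaller} of the two endpoint mobilities, whereas the bound on $(\kirsqr_i(p_K^n)-\kirsqr_i(p_{K\sig}^n))^2$ requires the larger. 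The paper resolves this in Lemma~\ref{lem:dissip.max}: one first bounds $\eta_\sig^n$ below by $\widecheck\eta_\sig(p) := \min\{\eta_K\circ\Ss_K(p),\,\eta_{K\sig}\circ\Ss_{K\sig}(p)\}$ evaluated at the upwind pressure, then exploits the monotonicity of $\widecheck\eta_\sig$ to rewrite the upwind choice in terms of the sign of $(p_K^n-p_{K\sig}^n)(\phead_K^n-\phead_{K\sig}^n)$ (the equivalence \eqref{eq:laurel}--\eqref{eq:hardy}). When this product is negative the whole summand is negative, so one may replace $\min_{\Jj_\sig^n}\widecheck\eta_\sig$ by $\max_{\Jj_\sig^n}\widecheck\eta_\sig =: \ov\eta_\sig^n$ at the cost of a further lower bound, and \emph{then} a Young inequality peels off the gravity part. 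The introduction of $\widecheck\eta_\sig$ (a minimum over the two rocktypes) is also what makes the estimate go through on interface edges $\sig\in\Ee_\Gamma$, where $\kirsqr_K\neq\kirsqr_{K\sig}$ and your inequality is not even well-posed; it is precisely this construction that yields the bound on $\kirmin$ in Corollary~\ref{cor:xi_Upsilon}.

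\textbf{Flux identification.} Your phrase ``$\kircho_i\circ\kirsqr_i^{-1}$ Lipschitz upgrades weak convergence of $\grad\kirsqr_i$ to weak convergence of $\grad\kircho_i$'' is not a valid argument: Lipschitz composition does not commute with weak limits of gradients. What the paper actually does (Proposition~\ref{prop:identify}) is introduce the auxiliary quantity $\wt\Bb_m^1$ with summand $\sqrt{\lambda_i\eta_\sig^n}\,(\kirsqr_i(p_K^n)-\kirsqr_i(p_{K\sig}^n))(\fntest_K^{n-1}-\fntest_{K\sig}^{n-1})$, pass to the limit in $\wt\Bb_m^1$ using the product of strong convergence of $\sqrt{\eta_\sig^n}$ (from Lemma~\ref{lem:eta_E}) with weak convergence of the discrete gradient of $\kirsqr_i$, and then show $|\Bb_m^1-\wt\Bb_m^1|\to 0$ by splitting into an interface part $\Rr_{\Gamma,m}$ (controlled by $\size_{\Tt_m}\,\measure^{d-1}(\Gamma)$ thanks to the $(d-1)$-dimensional nature of $\Gamma$) and an interior part $\Rr_{i,m}$ (controlled by showing $\eta_\sig^n$ and the mean-value mobility $\wt\eta_\sig^n$ have the same $L^1$ limit). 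Your ``Main obstacle'' paragraph correctly senses that the interface contribution is where the difficulty lies, but the concrete mechanism is this splitting, not a jump estimate of order $\sqrt{d_\sig}$ on $\kirsqr_i$.
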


The rest of this paper is outlined as follows. Section \S\ref{sec:fixed} is devoted to the numerical analysis at fixed grid. This encompasses the existence and uniqueness result stated in Theorem~\ref{thm:1} as well as a priori estimates that will help proving Theorem~\ref{thm:2}. The convergence of the scheme, which is taken up in \S\ref{sec:convergence}, 
relies on compactness arguments, which require a priori estimates that 
are uniform w.r.t. the grid. These estimates are mainly adaptations to the discrete setting of their continuous counterparts that arised in the stability analysis sketched out in \S\ref{ssec:cont.stability}. These estimates are shown in \S\ref{ssec:compact} 
to provide some compactness on the sequence of approximate solutions. In \S\ref{ssec:identify}, we show that these compactness properties together with the a priori estimates are sufficient to identify any limit of an approximate solution as a weak solution to the problem. 

In \S\ref{sec:numerics}, we provide some details about the practical numerical resolution by laying emphasis on the switch of variable for selecting the primary unknown and on the mesh refinement at an interface in order to better enforce pressure continuity. Finally, in \S\ref{sec:results}, numerical experiments on two configurations (drying and filling cases) for two capillary pressure models (Brooks-Corey and van Genuchten-Mualem) testify to the relevance of the local refinement strategy as a simple technique to preserve accuracy.

\begin{rmk}
Theorem~\ref{thm:2} only states the convergence of the scheme up to a subsequence. 
In the case where the weak solution is unique, then the whole sequence of approximate solutions 
would converge towards this solution. As far as we know, uniqueness of the weak solutions to Richards' equation 
is in general an open problem for heterogeneous media where $x \mapsto \Ss(p,x)$ is discontinuous. 
Uniqueness results are however available in the one-dimensional setting for a slightly more restrictive notion 
of solutions, cf. \cite{FVbarriere}, or under additional assumptions on the nonlinearities $\eta_i, \Ss_i$, cf. \cite{NoDEA}.
\end{rmk}

\section{Analysis at fixed grid}\label{sec:fixed}

\subsection{Some uniform a priori estimates}\label{ssec:apriori}
In this section, our aim is to derive a priori estimates on the solutions to the scheme~\eqref{eq:scheme.capi}--\eqref{eq:scheme.init}.
These estimates will be at the core of the existence proof of a solution to the scheme. 
They will also play a key role in proving the convergence of the scheme. 

The main estimate on which our analysis relies is a discrete counterpart of~\eqref{eq:NRG.1}. We recall that $a_\sig$ is the transmissivity introduced in \eqref{eq:transmissive}.

\begin{prop}\label{prop:dissip}
There exist two constants $\ctel{c:main}$, $\ctel{c:dissip.2}$ depending only on $\lambda$, $\mu$, $p^{\rm D}$, $\psi$, $\regul$, $\Omega$, $T$, $\phi$, and 
$\lVert\Ss_i\rVert_{L^1(\R_-)}$ such that 
\begin{subequations}\label{eq:dissconts}
\begin{align}
\sum_{n=1}^N \dtn \sum_{\sig \in \Ee} a_\sig \lambda_\sig \eta_\sig^n (p_K^n - p_{K\sig}^n)^2 & \leq \cter{c:main},  \label{eq:dissip}\\
\sum_{n=1}^N \dtn \sum_{\sig \in \Ee} a_\sig \lambda_\sig \eta_\sig^n (\phead_K^n - \phead_{K\sig}^n )^2 & \leq \cter{c:dissip.2}. \label{eq:Flux.L2}
\end{align}
\end{subequations}
\end{prop}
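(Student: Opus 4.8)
The plan is to mimic the formal continuous computation from \S\ref{ssec:cont.stability} at the discrete level, using the discrete energy $\Energy_K$ as a Lyapunov functional. First I would test the scheme~\eqref{eq:scheme.mass} by multiplying the equation for cell $K$ at time $t^n$ by $\dtn(p_K^n - p_K^{\rm D})$ and summing over $K\in\Tt$ and $n\in\{1,\dots,N\}$. The accumulation term, after the discrete chain-rule / convexity inequality
\[
\Energy_K(s_K^n) - \Energy_K(s_K^{n-1}) \leq \phi_K(\Ss_K^{-1}(s_K^n) - p_K^{\rm D})(s_K^n - s_K^{n-1}) = \phi_K(p_K^n - p_K^{\rm D})(s_K^n - s_K^{n-1}),
\]
telescopes in $n$ and is bounded below by $-\sum_{K}m_K\Energy_K(s_K^0)$, which is finite and grid-independent thanks to the boundedness of $\Energy_i$ on $[0,1]$. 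The flux term, after discrete integration by parts (reorganizing the sum over cells into a sum over edges, using that $F_{K\sig}^n = -F_{L\sig}^n$ on interior edges and $F_{K\sig}^n=0$ on Neumann edges, and $p_\sig^n = p_\sigma^{\rm D}$ on Dirichlet edges), becomes
\[
\Bb := \sum_{n=1}^N \dtn \sum_{\sig\in\Ee} a_\sig \lambda_\sig \eta_\sig^n (\phead_K^n - \phead_{K\sig}^n)(p_K^n - p_{K\sig}^n - (p_{K\sig}^{\rm D}-p_K^{\rm D})\text{-type correction}),
\]
so that combining these two ingredients gives $\Bb \leq \cter{c:main}'$ for a grid-independent constant.

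Next I would extract~\eqref{eq:dissip} from $\Bb$. Writing $\phead_K^n - \phead_{K\sig}^n = (p_K^n - p_{K\sig}^n) + (\psi_K - \psi_{K\sig})$ and $p_K^n - p_{K\sig}^n = (p_K^n - p_{K\sig}^n - \delta_{K\sig}^{\rm D}) + \delta_{K\sig}^{\rm D}$ with $\delta_{K\sig}^{\rm D} = p_K^{\rm D} - p_{K\sig}^{\rm D}$, the bilinear edge sum expands into the ``good'' quadratic term $\sum_{n}\dtn\sum_\sig a_\sig\lambda_\sig\eta_\sig^n(p_K^n-p_{K\sig}^n)^2$ plus cross terms. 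Each cross term is handled by Young's inequality: the factors $|\psi_K-\psi_{K\sig}|$ and $|\delta_{K\sig}^{\rm D}|$ are bounded by $\lVert\grad\psi\rVert_\infty d_\sig$ and $\lVert\grad p^{\rm D}\rVert_\infty d_\sig$ respectively (using the orthogonality $x_L - x_K \perp K|L$ and the regularity $d_{K\sig}\geq \regul\,\mathrm{diam}(K)$ to control $d_\sig$ by mesh quantities), and $\eta_\sig^n \leq 1/\mu$; after Young, the ``bad half'' of the good term is absorbed on the left and the leftover is bounded by $\sum_n\dtn\sum_\sig a_\sig d_\sig^2 \cdot C \leq C\,T\sum_\sig m_\sig d_\sig \leq C\,T\,d\,m_\Omega$ via the geometric identity $m_{\Delta_\sig}=d\,m_\sig d_\sig$ and $\sum_\sig m_{\Delta_\sig}\lesssim m_\Omega$. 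This yields~\eqref{eq:dissip}. Then~\eqref{eq:Flux.L2} follows immediately from~\eqref{eq:dissip} by the same elementary inequality $(\phead_K^n-\phead_{K\sig}^n)^2 \leq 2(p_K^n-p_{K\sig}^n)^2 + 2(\psi_K-\psi_{K\sig})^2$ together with the already-established bound $\sum_n\dtn\sum_\sig a_\sig\lambda_\sig\eta_\sig^n(\psi_K-\psi_{K\sig})^2 \leq C$.

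The main obstacle, I expect, is the careful bookkeeping in the discrete integration by parts when Dirichlet edges and the reference pressure $p^{\rm D}$ are present: one must track the $p_\sig^{\rm D}$ contributions so that they land as the correction term inside $\Bb$ rather than as an uncontrolled boundary residual, and verify that the resulting cross terms are genuinely of order $d_\sig^2$. A secondary technical point is justifying the convexity inequality for $\Energy_K$ with the correct sign and checking that the telescoping lower bound $-\sum_K m_K\Energy_K(s_K^0)$ is uniformly bounded — this is where the hypothesis $\Ss_i^{-1}\in L^1(0,1)$ (equivalently $\Ss_i\in L^1(\R_-)$) is used, exactly as at the continuous level. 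Everything else is a routine translation of the continuous estimate~\eqref{eq:NRG.1} to the TPFA finite-volume framework.
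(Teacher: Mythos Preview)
Your proposal is correct and follows essentially the same route as the paper: test~\eqref{eq:scheme.mass} against $\dtn(p_K^n-p_K^{\rm D})$, control the accumulation term via the convexity of $\Energy_K$, reorganize the flux term as a sum over edges, split off the good quadratic $\sum a_\sig\lambda_\sig\eta_\sig^n(p_K^n-p_{K\sig}^n)^2$, and absorb the cross terms with Young's inequality. The only cosmetic difference is that the paper invokes \cite[Lemma~9.4]{eymardEtAl2000finiteVolumeMethod} to bound $\sum_\sig a_\sig(\psi_K-\psi_{K\sig})^2$ and $\sum_\sig a_\sig(p_K^{\rm D}-p_{K\sig}^{\rm D})^2$, whereas you do it by hand via $|\psi_K-\psi_{K\sig}|\leq Cd_\sig$ and the diamond-cell identity; note that for the $p^{\rm D}$ term the discrete values are cell averages rather than point values, so the bound $|p_K^{\rm D}-p_{K\sig}^{\rm D}|\leq \lVert\grad p^{\rm D}\rVert_\infty d_\sig$ needs a (routine) extra line, which is precisely what the cited lemma packages.
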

In \eqref{eq:dissconts}, the relationship between $\sig$ and $K$ is to be understood as follows. For an inner edge $\sig \in \Ee_{\rm int}$, although it can be written as $\sig = K|L$ or $L|K$, only one of these contributes to the sum. For a boundary edge $\sig \in \Ee_{\rm ext}$, there is only one cell $K$ such that $\sig\in\Ee_K$, so there is no ambiguity in the sum.
\begin{proof}
Multiplying \eqref{eq:scheme.mass} by $\Delta t^n (p_K^n -p_K^{\rm D})$, summing over $K \in \Tt$ and $n\in\{1,\dots, N\}$, and carrying out discrete integration by parts yield
\be\label{eq:dissip.A+B}
\Aa + \Bb =0,
\ee
where we have set
\begin{subequations}
\begin{align} 
\Aa & =  \sum_{n=1}^N \sum_{K\in \Tt} m_K \phi_K (s_K^n- s_K^{n-1})(p_K^n -p_K^{\rm D}), \label{eq:uni.estim.A}\\
\Bb & = \sum_{n=1}^N \Delta t^n \sum_{\sig\in \Ee}
a_\sigma \lambda_\sigma \eta_{\sigma}^n (\phead_K^n - \phead_{K\sigma}^n )
(p_K^n-p_K^{\rm D} -p_{K\sigma}^n + p_{K\sigma}^{\rm D}). \label{eq:uni.estim.B}
\end{align}
\end{subequations}
The discrete energy density function $\Energy_K: [0,1] \to \R_+$, defined by means of the notation \eqref{eq:fsubdom} from the functions $f_i=\Energy_i$ introduced in \eqref{eq:defenergy}, is convex by construction. Consequently,
\[
\Energy_K(s_K^{n-1}) - \Energy_K(s_K^n) \geq \Energy'_K(s_K^n) (s_K^{n-1} - s_K^n) = 
\phi_K (p_K^n -p_K^{\rm D}) (s_K^{n-1} - s_K^n) .
\]
Therefore, the quantity $\Aa$ of \eqref{eq:uni.estim.A} can be bounded below by 
\begin{multline}
\qquad\qquad \Aa \, \geq \, \sum_{n=1}^N \sum_{K\in \Tt} m_K  (\Energy_K(s_K^n)- \Energy_K(s_K^{n-1})) \\
  = \sum_{K\in \Tt} m_K (\Energy_K(s_K^N)- \Energy_K(s_K^{0})) \, \geq \, - C_{\Aa}, \qquad\qquad \label{eq:dissp.A}
\end{multline}
the last inequality being a consequence of the boundedness of $\Energy_K$ on $[0,1]$.

Writing $\phead = p + \psi$ and expanding each summand of \eqref{eq:uni.estim.B}, we can split $\Bb$ into
$$\Bb= \Bb_1+ \Bb_2+ \Bb_3,$$ 
with 
\begin{align*}
\Bb_1 & = \phantom{-} \sum_{n=1}^N  \Delta t^n  \sum_{\sigma\in \Ee} a_\sigma \lambda_\sigma \eta_\sigma^n (p_K^n - p_{K\sigma}^n)^2,\\
\Bb_2 & =  \phantom{-} \sum_{n=1}^N  \Delta t^n  \sum_{\sigma\in \Ee} a_\sigma \lambda_\sigma \eta_\sigma^n (p_K^n - p_{K\sigma}^n)(\psi_K - \psi_{K\sigma}- p_K^{\rm D} + p_{K\sigma}^{\rm D} ),\\
\Bb_3 & = -\sum_{n=1}^N  \Delta t^n  \sum_{\sigma\in \Ee} a_\sigma \lambda_\sigma \eta_\sigma^n (\psi_K - \psi_{K\sig})
(p_K^{\rm D} - p_{K\sigma}^{\rm D}).
\end{align*}
It follows from \cite[Lemma 9.4]{eymardEtAl2000finiteVolumeMethod}
and from the boundedness of $\eta$ that there exists a constant $C$ depending only on $\lambda$, $\mu$, $\regul_{\Tt}$ and $\O$ such that
\begin{subequations}
\begin{align}
\sum_{\sigma\in \Ee} a_\sigma \lambda_\sigma \eta_\sigma^n (p_K^{\rm D} - p_{K\sigma}^{\rm D})^2 & \leq C  \,
\lVert\nabla p^{\rm D}\rVert^2_{L^2(\Omega)^d}, \\
\sum_{\sigma\in \Ee} a_\sigma \lambda_\sigma \eta_\sigma^n (\psi_K - \psi_{K\sigma})^2 
& \leq C \, \lVert\nabla \psi\rVert^2_{L^\infty(\Omega)^d} .
\end{align}
\end{subequations}
Thanks to these estimates and to the Cauchy-Schwarz inequality, we have
\[
\Bb_3\geq -C T \,\lVert\nabla p^{\rm D}\rVert_{L^2(\Omega)^d} \, \lVert\nabla \psi\rVert_{L^\infty(\Omega)^d} .
\]
On the other hand, Young's inequality provides
$$\Bb_2\geq -\frac{1}{2} \Bb_1 - C T \, \big( \lVert\nabla p^{\rm D}\rVert^2_{L^2(\Omega)^d} + \lVert\nabla \psi\rVert^2_{L^\infty(\Omega)^d} \big).$$
Hence, 
\be\label{eq:dissip.B}
\Bb \, \geq \,  \frac{1}{2}\sum_{n=1}^N  \Delta t^n  \sum_{\sigma\in \Ee} a_\sigma  
\lambda_\sigma \eta_\sigma^n (p_K^n - p_{K\sigma}^n)^2 - C_{\Bb}, 
\ee
by setting $C_{\Bb} = C T \,(\lVert\nabla p^{\rm D}\rVert^2_{L^2(\Omega)^d} + \lVert\nabla \psi\rVert^2_{L^\infty(\Omega)^d} + \lVert\nabla p^{\rm D}\rVert_{L^2(\Omega)^d} \, \lVert\nabla \psi\rVert_{L^\infty(\Omega)^d} )$.
Inserting \eqref{eq:dissp.A} and \eqref{eq:dissip.B} into \eqref{eq:dissip.A+B}, we recover \eqref{eq:dissip} with $\cter{c:main} = 2(C_{\Aa} + C_{\Bb})$.
From \eqref{eq:dissip}, we can deduce \eqref{eq:Flux.L2} by elementary manipulations.
\end{proof}

So far, we have not used the upwind choice~\eqref{eq:scheme.upwind} for the mobilities $\eta_\sig^n$. This will be done in the next lemma, where we derive a more useful variant of estimate \eqref{eq:dissip}, in which $\eta_\sig^n$ is replaced by $\ov{\eta}_\sig^n$ defined below. In a homogeneous medium, $\ov{\eta}_\sig^n \geq \eta_\sig^n$ so that the new estimate \eqref{eq:dissip.max} seems to be stronger than \eqref{eq:dissip}. 

We begin by introducing the functions $\widecheck{\eta}_\sig: \R \to (0, 1/\mu]$ defined for $\sig \in \Ee$ by 
\begin{subequations}
\be
\widecheck{\eta}_\sig(p) = 
\min\big\{\eta_K \circ \Ss_K(p), \eta_{K\sig} \circ \Ss_{K\sig}(p) \big\},
\qquad \forall p \in \R.
\ee
By virtue of assumptions \eqref{eq:EtaSs}, each argument of the minimum function is nondecreasing and positive function of $p\in\R$. As a result, $\widecheck{\eta}_\sig$ is also a nondecreasing and positive function of $p\in\R$. Note that $\widecheck{\eta}_\sig = \eta_i \circ \Ss_i$ for all $\sig \in \Ee_i$, while for interface edges $\sigma \subset \Gamma_{i,j}$, the mere inequality $\widecheck{\eta}_\sig \leq \eta_i \circ \Ss_i$ holds. Next, we consider the intervals
\be
\Jj_{\sig}^n = [  p_K^n \bot p_{K\sig}^n , \, p_K^n \top p_{K\sig}^n], \qquad \text{for } \, \sig \in \Ee_K, \; K \in \Tt, \; 1 \leq n \leq N,
\ee
with the notations $a\bot b = \min(a,b)$ and $a\top b = \max(a,b)$. At last, we set
\be\label{eq:ov_eta_sig}
\ov{\eta}_\sig^n = \max_{p \in 
\Jj_\sig^n} \widecheck{\eta}_\sig(p), \qquad \text{for } \, \sig \in \Ee, \; 1 \leq n \leq N. 
\ee
\end{subequations}
\begin{lem}\label{lem:dissip.max}
There exists a constant $\ctel{c:dissip.max}$ depending on the same data as $\cter{c:main}$ such that 
\be\label{eq:dissip.max}
\sum_{n=1}^N \dt^n \sum_{\sig \in\Ee} a_\sig \ov\eta_\sig^n \left(p_K^n - p_{K\sig}^n\right)^2 \leq \cter{c:dissip.max} .
\ee
\end{lem}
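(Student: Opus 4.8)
The plan is to split the edge sum in \eqref{eq:dissip.max} into the faces where the upwind direction agrees with the sign of the pressure jump --- on which estimate \eqref{eq:dissip} already suffices --- and the remaining ``wrong-upwind'' faces, on which the pressure jump turns out to be controlled by the (mesh-small) gravity jump. Fix $n\in\{1,\dots,N\}$ and, for $\sig\in\Ee_K$, write $a=p_K^n-p_{K\sig}^n$ and $b=\psi_K-\psi_{K\sig}$, so that $\phead_K^n-\phead_{K\sig}^n=a+b$; since $\psi$ is affine, $|b|\le\lVert\grad\psi\rVert_\infty\,d_\sig$, and $b=0$ (hence $a=0$ too) on Neumann faces, where the summand vanishes by \eqref{eq:mirror}. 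Discarding the faces with $a=0$, let $\Ee_n^\star$ be the set of $\sig$ with $a\ne0$ and $a(a+b)\ge0$.

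On $\Ee_n^\star$, the cell carrying the larger pressure is also the upwind cell, which lets the upwinded mobility control $\ov\eta_\sig^n$. Assume without loss of generality $p_K^n>p_{K\sig}^n$; by monotonicity of $\widecheck{\eta}_\sig$ and its definition as a minimum, $\ov\eta_\sig^n=\widecheck{\eta}_\sig(p_K^n)\le\eta_K(\Ss_K(p_K^n))=\eta_K(s_K^n)$. The condition $a(a+b)\ge0$ with $a>0$ forces $\phead_K^n\ge\phead_{K\sig}^n$, so by \eqref{eq:scheme.upwind} either $\eta_\sig^n=\eta_K(s_K^n)\ge\ov\eta_\sig^n$, or $\eta_\sig^n=\tfrac12(\eta_K(s_K^n)+\eta_{K\sig}(s_{K\sig}^n))\ge\tfrac12\eta_K(s_K^n)\ge\tfrac12\ov\eta_\sig^n$; in both cases $\ov\eta_\sig^n\le2\eta_\sig^n$. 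Combining this with the lower bound $\lambda_\sig\ge\underline\lambda:=\min_{1\le i\le I}\lambda_i>0$ --- which follows from $d_{K\sig}+d_{L\sig}=d_\sig$ and $1/\lambda_\sig=d_{L\sig}/(\lambda_L d_\sig)+d_{K\sig}/(\lambda_K d_\sig)\le1/\underline\lambda$ for $\sig=K|L$, and trivially for boundary faces --- yields $a_\sig\ov\eta_\sig^n(p_K^n-p_{K\sig}^n)^2\le\tfrac{2}{\underline\lambda}\,a_\sig\lambda_\sig\eta_\sig^n(p_K^n-p_{K\sig}^n)^2$. Summing over $\sig\in\Ee_n^\star$, then over $n$ with the weights $\dtn$, and completing to a sum over all of $\Ee$ (the reinstated summands being nonnegative), estimate \eqref{eq:dissip} bounds this portion by $\tfrac{2}{\underline\lambda}\cter{c:main}$.

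On the complementary faces $a(a+b)<0$ with $a\ne0$, so $a^2<-ab\le|a|\,|b|$, hence $|a|<|b|$, i.e. $(p_K^n-p_{K\sig}^n)^2<(\psi_K-\psi_{K\sig})^2\le\lVert\grad\psi\rVert_\infty^2\,d_\sig^2$. Since $\ov\eta_\sig^n\le1/\mu$, we get $a_\sig\ov\eta_\sig^n(p_K^n-p_{K\sig}^n)^2=\tfrac{m_\sig}{d_\sig}\,\ov\eta_\sig^n(p_K^n-p_{K\sig}^n)^2\le\tfrac{1}{\mu}\lVert\grad\psi\rVert_\infty^2\,m_\sig d_\sig$. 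Summing over these faces and using $\sum_{\sig\in\Ee}m_\sig d_\sig\le C\,m_\Om$ (a consequence of the geometric identity relating $m_{\Delta_\sig}$ to $m_\sig d_\sig$, together with $\sum_{\sig\in\Ee}m_{\Delta_\sig}\le m_\Om$, the half-diamonds partitioning each cell), then over $n$ with $\sum_{n}\dtn=T$, produces a bound $\le C\,m_\Om\,T\,\lVert\grad\psi\rVert_\infty^2/\mu$. Adding the two contributions yields \eqref{eq:dissip.max} with an explicit $\cter{c:dissip.max}$ depending only on the data allowed for $\cter{c:main}$ (in particular on $\underline\lambda$).

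I do not expect a serious obstacle once the dichotomy is found; the key insight is precisely that on $\Ee_n^\star$ the upwind mobility dominates a fixed multiple of $\ov\eta_\sig^n$, while the complementary faces are exactly those on which the $O(d_\sig)$ gravity jump swamps the pressure jump, so that the blow-up of $a_\sig=m_\sig/d_\sig$ is absorbed by $\sum_\sig m_\sig d_\sig\le C m_\Om$. The only bookkeeping point to watch is the boundary: Neumann faces contribute nothing, while Dirichlet faces behave like interior faces since $\psi$ is globally affine and $x_\sig\in\sig$, so $|b|\le\lVert\grad\psi\rVert_\infty\,d_\sig$ still holds there.
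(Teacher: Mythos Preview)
Your proof is correct and rests on the same dichotomy as the paper --- splitting the edges according to the sign of $(p_K^n-p_{K\sig}^n)(\phead_K^n-\phead_{K\sig}^n)$ --- but the way you exploit each case is genuinely different. The paper does not compare $\ov\eta_\sig^n$ to $\eta_\sig^n$ directly: instead it minorizes $\eta_\sig^n$ by $\widecheck\eta_\sig$ evaluated at the upwind pressure, converts the quadratic term $\lambda_\sig\widecheck\eta_\sig(\cdot)(p_K^n-p_{K\sig}^n)^2$ into a mixed product $\lambda_\sig\widecheck\eta_\sig(\cdot)(p_K^n-p_{K\sig}^n)(\phead_K^n-\phead_{K\sig}^n)$ at the price of a bounded correction, observes that upwinding with respect to $\phead$ coincides with taking $\max_{\Jj_\sig^n}\widecheck\eta_\sig$ or $\min_{\Jj_\sig^n}\widecheck\eta_\sig$ depending on the sign of the product, replaces $\min$ by $\max$ on the ``bad'' edges (which only lowers a negative term), and finishes by expanding $\phead=p+\psi$ and applying Young's inequality. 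Your route is more elementary: on the good edges you exhibit the clean pointwise bound $\ov\eta_\sig^n\le 2\eta_\sig^n$ and invoke \eqref{eq:dissip} together with $\lambda_\sig\ge\underline\lambda$; on the bad edges you bound the pressure jump by the $O(d_\sig)$ gravity jump and absorb $a_\sig d_\sig^2=m_\sig d_\sig$ via $\sum_\sig m_\sig d_\sig=d\,m_\O$. This avoids the mixed-product manipulation and Young's inequality entirely; the paper's argument, in return, is slightly more systematic and does not require isolating the explicit lower bound on $\lambda_\sig$.
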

\begin{proof}
We partition the set $\Ee$ of edges into three subsets, namely,
\[
\Ee^{n}_{+} = \big\{\sig \, |\; \phead_K^n > \phead_{K\sig}^n \big\},\quad
\Ee^{n}_{-} = \big\{\sig \, |\; \phead_K^n < \phead_{K\sig}^n \big\},\quad
\Ee^{n}_{0} = \big\{\sig \, |\; \phead_K^n = \phead_{K\sig}^n \big\}.
\]
Invoking $\widecheck{\eta}_\sig = \min(\eta_K \circ \Ss_K, \,\eta_{K\sig}\circ \Ss_{K\sig})$, we can minorize the left-hand side of~\eqref{eq:dissip} to obtain
\begin{multline*}
\sum_{n = 1}^N \dt^n \Big[ 
\sum_{\sig \in \Ee^{n}_{+}} a_\sig \lambda_\sig \widecheck{\eta}_\sig(p_K^n) (p_K^n - p_{K\sig}^n)^2  
+ \sum_{\sig \in \Ee^{n}_{-}} a_\sig \lambda_\sig \widecheck{\eta}_\sig(p_{K\sig}^n) (p_K^n - p_{K\sig}^n)^2 \\
+ \sum_{\sig \in \Ee^{n}_{0}} a_\sig \lambda_\sig \textstyle\frac{1}{2} 
(\widecheck{\eta}_\sig(p_K^n) + \widecheck{\eta}_\sig(p_{K\sig}^n)) (p_K^n - p_{K\sig}^n)^2 
\Big] \leq \cter{c:main}.
\end{multline*}
Starting from this inequality and using the boundedness of $\eta_i$ and $\psi$, we can readily show that there exists a constant $C$ depending on the same data as $\cter{c:main}$
such that 
\begin{multline*}
\Dd_1 := \sum_{n = 1}^N \dt^n \Big[ 
\sum_{\sig \in \Ee^{n}_{+}} a_\sig \lambda_\sig \widecheck{\eta}_\sig(p_K^n) (p_K^n - p_{K\sig}^n) (\phead_K^n - \phead_{K\sig}^n)  \\
+ \sum_{\sig \in \Ee^{n}_{-}} a_\sig \lambda_\sig \widecheck{\eta}_\sig(p_{K\sig}^n) (p_K^n - p_{K\sig}^n) (\phead_K^n - \phead_{K\sig}^n )\Big] 
\leq C,
\end{multline*}
in which the sum over $\Ee_0^n$ was omitted because all of its summands vanish.
Simlarly to what was pointed out in equation 2.9 in~\cite{Ahmed_M2AN}, we notice that since $\eta_\sigma$ is nondecreasing w.r.t. $p$, it is straightforward to check that the definition
\be \label{eq:laurel}
\widecheck{\eta}_\sig^n := \begin{cases}
\; \widecheck{\eta}_\sig(p_K^n) & \text{ if } \phead^n_K > \phead^n_{K\sig},\\
\; \textstyle\frac{1}{2} (\widecheck{\eta}_\sig(p_K^n) + \widecheck{\eta}_\sig(p_{K\sig}^n))
& \text{ if } \phead^n_K = \phead^n_{K\sig},\\ 
\; \widecheck{\eta}_\sig(p_{K\sig}^n) & \text{ if } \phead^n_K < \phead^n_{K\sig}
\end{cases}
\ee 
exactly amounts to
\be \label{eq:hardy}
\widecheck{\eta}_\sig^n = \begin{cases}
\; \max_{p\in \Jj_\sig^n}\widecheck{\eta}_\sig(p) & \text{ if } (p^n_K - p^n_{K\sig})(\phead^n_K - \phead^n_{K\sig}) > 0,\\
\; \textstyle\frac{1}{2} (\widecheck{\eta}_\sig(p_K^n) + \widecheck{\eta}_\sig(p_{K\sig}^n))
& \text{ if } (p^n_K - p^n_{K\sig})(\phead^n_K - \phead^n_{K\sig}) = 0,\\ 
\; \min_{p\in \Jj_\sig^n}\widecheck{\eta}_\sig(p) & \text{ if } (p^n_K - p^n_{K\sig})(\phead^n_K - \phead^n_{K\sig}) < 0.
\end{cases}
\ee 
Taking advantage of this equivalence, we can transform $\Dd_1$ into
\begin{multline}
\Dd_1 = \sum_{n = 1}^N \dt^n \Big[ 
\sum_{\sig \in \Ee^{n}_{>}} a_\sig \lambda_\sig \max_{\Jj_\sig^n}\widecheck{\eta}_\sig (p_K^n - p_{K\sig}^n) (\phead_K^n - \phead_{K\sig}^n)  \\
+ \sum_{\sig \in \Ee^{n}_{<}} a_\sig \lambda_\sig \min_{\Jj_\sig^n}\widecheck{\eta}_\sig (p_K^n - p_{K\sig}^n) (\phead_K^n - \phead_{K\sig}^n )
\Big] \leq C ,
\end{multline}
where $\Ee^n_{>} = \{ \sigma \,|\, (p^n_K - p^n_{K\sig})(\phead^n_K - \phead^n_{K\sig}) > 0 \}$ and  $\Ee^n_{<} = \{ \sigma \,|\, (p^n_K - p^n_{K\sig})(\phead^n_K - \phead^n_{K\sig}) < 0 \}$. The second sum over $\Ee^n_{<}$ contains only negative summands and can be further minorized if $\min_{\Jj^n_\sig}\widecheck{\eta}_\sig$ is replaced by $\max_{\Jj^n_\sig}\widecheck{\eta}_\sig$. In other words,
\[
\Dd_2 := \sum_{n = 1}^N \dt^n  \sum_{\sig\in\Ee} a_\sig \lambda_\sig  \ov \eta_\sig^n (p_K^n - p_{K\sig}^n)
(\phead_K^n - \phead_{K\sig}^n ) \leq \Dd_1 \leq C. 
\]
Writing $\phead = p+\psi$, expanding each summand of $\Dd_2$ and applying Young's inequality, we end up with
\[
\frac{1}{2}\sum_{n = 1}^N \dt^n  \sum_{\sig\in\Ee} a_\sig \lambda_\sig  \ov \eta_\sig^n
\, [ (p_K^n - p_{K\sig}^n)^2 - (\psi_K^n - \psi_{K\sig}^n)^2 ]
\leq \Dd_2 \leq C .
\]
Estimate \eqref{eq:dissip.max} finally follows from
the boundedness of $\eta$, $1/\lambda$ and $\psi$. 
\end{proof}

The above lemma has several important consequences for the analysis. 
Let us start with discrete counterparts to estimations~\eqref{eq:NRG.2} and \eqref{eq:NRG.Upsilon}.

\begin{cor}\label{cor:xi_Upsilon}
Let $\cter{c:dissip.max}$ be the constant in Lemma~\ref{lem:dissip.max}. Then,
\begin{subequations}\label{eq:dissip.xiUpsilon}
\begin{align}
\sum_{n=1}^N \dtn \sum_{i=1}^I \sum_{\sig \in \Ee_i} a_\sig (\kirsqr_i(p_K^n) - \kirsqr_i(p_{K\sigma}^n))^2 
& \leq \cter{c:dissip.max}, \label{eq:dissip.xi}\\
\sum_{n=1}^N \dtn \sum_{\sig \in \Ee} a_\sig (\kirmin(p_K^n) - \kirmin(p_{K\sigma}^n))^2 
& \leq \cter{c:dissip.max}. \label{eq:dissip.Upsilon}
\end{align}
\end{subequations}
Moreover, there exists two constants $\ctel{c:L2.Upsilon}$,  $\ctel{c:L2.xi}$ 
depending on the same data as $\cter{c:main}$ and additionnally on  
$\lVert\sqrt{\eta_i \circ \Ss_i}\rVert_{L^1(\R_-)}$, $1\leq i\leq I$, such that 
\begin{subequations}\label{eq:L2.Upsilonxi}
\begin{align}
\sum_{n=1}^N \dt^n \sum_{K\in\Tt} m_K |\kirmin(p_K^n)|^2  \leq \cter{c:L2.Upsilon},\label{eq:L2.Upsilon} \\
\sum_{n=1}^N \dt^n \sum_{i=1}^I \sum_{K\in\Tt_i} m_K |\kirsqr_i(p_K^n)|^2  \leq \cter{c:L2.xi}.\label{eq:L2.xi}
\end{align}
\end{subequations}
\end{cor}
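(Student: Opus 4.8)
The plan is to deduce the four estimates from Lemma~\ref{lem:dissip.max}, which plays the role of a discrete analogue of \eqref{eq:NRG.1}, in the same way that \eqref{eq:NRG.2}--\eqref{eq:xi_Upsilon} were obtained from \eqref{eq:NRG.1} at the continuous level, together with the elementary properties of $\kirsqr_i$ and $\kirmin$ collected in \S\ref{ssec:cont.stability}. For the discrete gradient bounds \eqref{eq:dissip.xi}--\eqref{eq:dissip.Upsilon} I would use the fundamental theorem of calculus and the Cauchy--Schwarz inequality. Fix $\sig\in\Ee_i$ and the cell $K$ with $\sig\in\Ee_K$; since $\sig\notin\Ee_\Gamma$ one has $\lambda_\sig=\lambda_i$ and $\widecheck\eta_\sig=\eta_i\circ\Ss_i$, hence $\ov\eta_\sig^n=\max_{\Jj_\sig^n}\eta_i\circ\Ss_i$. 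As $\kirsqr_i$ is nondecreasing, $\lvert\kirsqr_i(p_K^n)-\kirsqr_i(p_{K\sig}^n)\rvert=\int_{\Jj_\sig^n}\sqrt{\lambda_i\,\eta_i\circ\Ss_i(\dummyp)}\,\d\dummyp$, and Cauchy--Schwarz gives $(\kirsqr_i(p_K^n)-\kirsqr_i(p_{K\sig}^n))^2\le\lambda_\sig\,\ov\eta_\sig^n\,(p_K^n-p_{K\sig}^n)^2$. Summing over $\sig\in\Ee_i$, over $i$ (the $\Ee_i$ being disjoint subsets of $\Ee$), and over $n$ with the weights $\dtn$, and bounding $\lambda_\sig\le\ov\lambda$, reduces \eqref{eq:dissip.xi} to \eqref{eq:dissip.max}. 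The same computation applies to every $\sig\in\Ee$ for $\kirmin$: if $i,j$ are the two rocktypes adjacent to $\sig$, then $\widecheck\eta_\sig(\dummyp)$ is one of $\eta_i\circ\Ss_i(\dummyp)$, $\eta_j\circ\Ss_j(\dummyp)$, so that $\min_{1\le\ell\le I}\sqrt{\lambda_\ell\,\eta_\ell\circ\Ss_\ell(\dummyp)}\le\sqrt{\ov\lambda\,\widecheck\eta_\sig(\dummyp)}\le\sqrt{\ov\lambda\,\ov\eta_\sig^n}$ on $\Jj_\sig^n$, whence $(\kirmin(p_K^n)-\kirmin(p_{K\sig}^n))^2\le\ov\lambda\,\ov\eta_\sig^n(p_K^n-p_{K\sig}^n)^2$ and \eqref{eq:dissip.Upsilon} follows from Lemma~\ref{lem:dissip.max}. (The spurious factor $\ov\lambda$ is harmless: the proof of Lemma~\ref{lem:dissip.max} actually delivers the bound for $\sum_n\dtn\sum_\sig a_\sig\lambda_\sig\ov\eta_\sig^n(p_K^n-p_{K\sig}^n)^2$, so one may retain the very constant $\cter{c:dissip.max}$.)

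For the $L^2$ bound \eqref{eq:L2.Upsilon} I would invoke a discrete Poincaré inequality, applicable since $\measure^{d-1}(\Gamma^{\rm D})>0$. Consider the discrete function with cell values $\kirmin(p_K^n)-\kirmin(p_K^{\rm D})$ and value $0$ on each $\sig\in\Ee_{\rm ext}^{\rm D}$ (recall $p_\sig^n=p_\sig^{\rm D}$), which thus has zero trace on $\Gamma^{\rm D}$. Using $(a+b)^2\le2a^2+2b^2$ edge by edge, its discrete $H^1$-seminorm is bounded by twice the left-hand side of \eqref{eq:dissip.Upsilon} plus twice the discrete $H^1$-seminorm of the interpolate $(\kirmin(p_K^{\rm D}))_K$, $(\kirmin(p_\sig^{\rm D}))_\sig$ of $\kirmin(p^{\rm D})$; since $\kirmin$ is Lipschitz (with constant $\le\sqrt{\ov\lambda/\mu}$), \cite[Lemma~9.4]{eymardEtAl2000finiteVolumeMethod} controls the latter by $C\,(\ov\lambda/\mu)\,\lVert\grad p^{\rm D}\rVert_{L^2(\O)^d}^2$, exactly as in the proof of Proposition~\ref{prop:dissip}. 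The discrete Poincaré inequality then bounds $\sum_n\dtn\sum_K m_K(\kirmin(p_K^n)-\kirmin(p_K^{\rm D}))^2$; adding back $\sum_n\dtn\sum_K m_K\kirmin(p_K^{\rm D})^2\le T\lvert\O\rvert\,\lVert\kirmin(p^{\rm D})\rVert_{L^\infty(\O)}^2$ yields \eqref{eq:L2.Upsilon}. Note that $\kirmin(p^{\rm D})$ is bounded on $\O$ because $p^{\rm D}\in L^\infty(\O)$, $\kirmin$ is affine on $\R_+$, and $\kirmin$ is bounded on $\R_-$ by assumption~\eqref{eq:hyp.xi} — this is where the extra dependence on the $\lVert\sqrt{\eta_i\circ\Ss_i}\rVert_{L^1(\R_-)}$'s enters the constant $\cter{c:L2.Upsilon}$.

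The last bound \eqref{eq:L2.xi} does not use \eqref{eq:dissip.xi} at all: it comes from \eqref{eq:L2.Upsilon} together with the pointwise comparison \eqref{eq:xi_Upsilon}, which I would make quantitative in the form $\lvert\kirsqr_i(p)\rvert\le\sqrt{\ov\lambda/\lambda_{\min}}\,\lvert\kirmin(p)\rvert+\lVert\sqrt{\lambda_i\,\eta_i\circ\Ss_i}\rVert_{L^1(\R_-)}$, valid for all $p\in\R$ with $\lambda_{\min}=\min_{1\le i\le I}\lambda_i$: indeed $\kirsqr_i(p)=\sqrt{\lambda_i/\lambda_{\min}}\,\kirmin(p)\ge0$ for $p\ge0$ (both being the same positive multiple of $p$), while $\lvert\kirsqr_i(p)\rvert\le\lVert\sqrt{\lambda_i\,\eta_i\circ\Ss_i}\rVert_{L^1(\R_-)}$ for $p\le0$. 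Squaring, summing over $K\in\Tt_i$, $i$ and $n$, and inserting \eqref{eq:L2.Upsilon} together with $\sum_n\dtn\sum_{K\in\Tt_i}m_K\le T\lvert\O\rvert$ gives \eqref{eq:L2.xi}.

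The only genuinely delicate step is the discrete Poincaré argument: one must set up the auxiliary discrete function so that the mixed Neumann/Dirichlet boundary is treated correctly — zero trace on $\Gamma^{\rm D}$, no constraint on $\Gamma^{\rm N}$ — and must control, uniformly in the mesh, the extra seminorm generated by $\kirmin(p^{\rm D})$; everything else is routine bookkeeping with the monotonicity, boundedness and asymptotic behaviour of $\kirsqr_i$ and $\kirmin$ already recorded in \S\ref{ssec:cont.stability}.
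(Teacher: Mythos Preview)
Your argument is correct and follows essentially the same route as the paper: the pointwise Lipschitz-type bound $(\kirsqr_i(p_K^n)-\kirsqr_i(p_{K\sig}^n))^2\le\lambda_\sig\,\ov\eta_\sig^n(p_K^n-p_{K\sig}^n)^2$ (and its analogue for $\kirmin$) reduces \eqref{eq:dissip.xi}--\eqref{eq:dissip.Upsilon} to Lemma~\ref{lem:dissip.max}, the discrete Poincar\'e inequality with the $\kirmin(p^{\rm D})$ offset gives \eqref{eq:L2.Upsilon}, and the pointwise comparison \eqref{eq:xi_Upsilon} yields \eqref{eq:L2.xi}. Your remark that the proof of Lemma~\ref{lem:dissip.max} in fact controls $\sum_n\dtn\sum_\sig a_\sig\lambda_\sig\ov\eta_\sig^n(p_K^n-p_{K\sig}^n)^2$, so that no spurious $\ov\lambda$ appears in the constant, is a welcome clarification.
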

\begin{proof}
Consider those edges $\sig \in \Ee_i$ ---defined in \eqref{eq:EdgesOfI}--- corresponding to some fixed $i\in\{1,\ldots,I\}$, for which $\widecheck{\eta}_\sig = \eta_i \circ \Ss_i = |\kirsqr'_i|^2$ and $\ov{\eta}^n_\sig = \max_{\Jj^n_\sig} |\kirsqr'_i|^2$ due to \eqref{eq:xi.def}. By summing the elementary inequality
\[
(\kirsqr_i(p_K^n) - \kirsqr_i(p_{K\sig}^n))^2 \leq  \ov{\eta}^n_{\sig} \; 
(p_K^n - p_{K\sig}^n)^2,  
\]
over $\sig\in\Ee_i$, $i\in\{1,\ldots,I\}$ and $n\in\{1,\ldots,N\}$ using appropriate weights, we get
\[
\sum_{n=1}^N \dtn \sum_{i=1}^I \sum_{\sig \in \Ee_i} a_\sig (\kirsqr_i(p_K^n) - \kirsqr_i(p_{K\sigma}^n))^2
\leq 
\sum_{n=1}^N \dtn \sum_{i=1}^I \sum_{\sig \in \Ee_i} a_\sig \ov{\eta}^n_{\sig} \; 
(p_K^n - p_{K\sig}^n)^2 ,
\]
whose right-hand side is obviously less than $\cter{c:dissip.max}$, thanks to~\eqref{eq:dissip.max}. This proves \eqref{eq:dissip.xi}.

Similarly, the respective definitions of $\ov{\eta}_\sig^n$ and $\kirmin$ have been tailored so that $\max_{\Jj^n_\sig} |\kirmin'|^2 \leq \ov{\eta}^n_\sig$ for all $\sig\in\Ee$. As a consequence,
\[
(\kirmin(p_K^n) - \kirmin(p_{K\sig}^n))^2
\leq 
\ov{\eta}_\sig^n (p_K^n  - p_{K\sig}^n )^2.
\]
Summing these inequalities over $\sig\in\Ee$ and $n\in \{1,\ldots,N\}$ with appropriate weights and invoking \eqref{eq:dissip.max}, we prove~\eqref{eq:dissip.Upsilon}.
 
The argument for \eqref{eq:L2.Upsilon} is subtler. Starting from the basic inequality
\begin{multline*}
\qquad (\kirmin(p_K^n)  - \kirmin(p_K^{\rm D})- \kirmin(p_{K\sig}^n) 
+ \kirmin(p_{K\sig}^{\rm D}))^2\\
\leq 
2 (\kirmin(p_K^n) - \kirmin(p_{K\sig}^n))^2 + 2 (\kirmin(p_K^{\rm D}) - \kirmin(p_{K\sig}^{\rm D}))^2 , \qquad
\end{multline*}
we apply the discrete Poincar\'e inequality of~\cite[Lemma 9.1]{eymardEtAl2000finiteVolumeMethod} ---which is legitimate since $\Gamma^{\rm D}$ has positive measure--- followed by \cite[Lemma 9.4]{eymardEtAl2000finiteVolumeMethod} to obtain
\[
\sum_{n=1}^N \dt^n \sum_{K\in\Tt} m_K ( \kirmin(p_K^n)  - \kirmin(p_K^{\rm D}) )^2 \\
\leq 2 C_{{\rm P},\Tt} \big( \cter{c:dissip.max} + C_\regul T \lVert \kirmin'\rVert_\infty \lVert \grad p^{\rm D} \rVert^2 \big) ,
\]
where $C_{{\rm P},\Tt}$ denotes the discrete Poincar\'e constant, and $C_\regul$ is the quantity appearing in \cite[Lemma 9.4]{eymardEtAl2000finiteVolumeMethod} and only depends on $\regul_{\Tt}$. $\cter{c:L2.Upsilon} = 4 C_{{\rm P},\Tt} \big( \cter{c:dissip.max} + C_\regul T \lVert \kirmin'\rVert_\infty \lVert \grad p^{\rm D} \rVert^2 \big) 
+ 2 m_\O T \lVert\kirmin(p^{\rm D})\rVert_\infty^2.$

The last estimate \eqref{eq:L2.xi} results from the comparison~\eqref{eq:xi_Upsilon} of the nonlinearities $\kirsqr_i$ and $\kirmin$.
 \end{proof}

The purpose of the next lemma is to work out a weak estimate on the discrete counterpart of $\p_t s$, which will lead to compactness properties in \S\ref{ssec:compact}. For $\fntest \in C^\infty_c( Q_T)$, let 
\[
\fntest_K^n = \frac1{m_K} \int_K \fntest(t^n,x) \, \d x, \qquad \forall K \in \Tt, \; 1 \leq n \leq N. 
\]
\begin{lem}\label{lem:boundS}
There exists a constant $\ctel{c:s2}$ depending on the same data as $\cter{c:main}$ such that
\be\label{boundS}
\sum_{n=1}^N \sum_{K\in\Tt} m_K \phi_K (s_K^n -s_K^{n-1}) \fntest_K^n  \leq \cter{c:s2} \lVert \nabla \fntest \rVert_{L^\infty(Q_T)^d}, \quad \forall\fntest\in C^\infty_c( Q_T).
\ee
\end{lem}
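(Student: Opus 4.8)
The plan is to test the discrete mass balance \eqref{eq:scheme.mass} against $\Delta t^n\,\fntest_K^n$, sum over $K\in\Tt$ and $n\in\{1,\dots,N\}$, and perform a discrete integration by parts on the flux term, so that
\[
\sum_{n=1}^N \sum_{K\in\Tt} m_K \phi_K (s_K^n -s_K^{n-1}) \fntest_K^n
= \sum_{n=1}^N \dt^n \sum_{\sig\in\Ee} m_\sig F_{K\sig}^n (\fntest_K^n - \fntest_{K\sig}^n),
\]
using that $\fntest$ has compact support in $Q_T$ (so boundary edges and the terminal time contribute nothing, and no $\fntest^{\rm D}$ correction is needed). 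Inserting the definition \eqref{eq:scheme.flux} of $F_{K\sig}^n$, the right-hand side becomes $\sum_n \dt^n \sum_\sig a_\sig \lambda_\sig \eta_\sig^n (\phead_K^n - \phead_{K\sig}^n)(\fntest_K^n - \fntest_{K\sig}^n)$.

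Next I would bound this by Cauchy--Schwarz on the edge sum, splitting the weight $a_\sig\lambda_\sig\eta_\sig^n$ as $\sqrt{a_\sig\lambda_\sig\eta_\sig^n}\cdot\sqrt{a_\sig\lambda_\sig\eta_\sig^n}$:
\[
\Big|\sum_{n=1}^N \dt^n \sum_{\sig\in\Ee} a_\sig \lambda_\sig \eta_\sig^n (\phead_K^n - \phead_{K\sig}^n)(\fntest_K^n - \fntest_{K\sig}^n)\Big|
\leq \Big(\sum_{n,\sig} \dt^n a_\sig \lambda_\sig \eta_\sig^n (\phead_K^n - \phead_{K\sig}^n)^2\Big)^{1/2}
\Big(\sum_{n,\sig} \dt^n a_\sig \lambda_\sig \eta_\sig^n (\fntest_K^n - \fntest_{K\sig}^n)^2\Big)^{1/2}.
\]
The first factor is $\leq \sqrt{\cter{c:dissip.2}}$ by estimate \eqref{eq:Flux.L2} of Proposition~\ref{prop:dissip}. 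For the second factor, since $\fntest$ is Lipschitz one has $|\fntest_K^n - \fntest_{K\sig}^n| \leq d_\sig \lVert\nabla\fntest\rVert_{L^\infty(Q_T)^d}$ (for interface/interior edges; for the boundary edges the term is zero by compact support), so $a_\sig (\fntest_K^n - \fntest_{K\sig}^n)^2 \leq m_\sig d_\sig \lVert\nabla\fntest\rVert_\infty^2$, and $\sum_{\sig} m_\sig d_\sig \leq \tfrac1d \sum_\sig m_{\Delta_\sig} \lesssim m_\O$ using the geometric relation $m_{\Delta_\sig}=d\,m_\sig d_\sig$ (each diamond being counted a bounded number of times). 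Combined with $\lambda_\sig\eta_\sig^n \leq \ov\lambda/\mu$ and $\sum_n\dt^n = T$, the second factor is $\leq C(\ov\lambda,\mu,\O,T)\lVert\nabla\fntest\rVert_{L^\infty(Q_T)^d}$. Setting $\cter{c:s2} = \sqrt{\cter{c:dissip.2}}\cdot C(\ov\lambda,\mu,\O,T)$ yields \eqref{boundS}.

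The only mildly delicate points — and hence the "obstacle", though it is a minor one — are the bookkeeping of which cell plays the role of $K$ for each edge in the one-sided edge sums (the convention spelled out after Proposition~\ref{prop:dissip}), and verifying that the diamond cells $\Delta_\sig$ tile $\O$ with bounded overlap so that $\sum_{\sig\in\Ee} m_\sig d_\sig$ is controlled by $m_\O$ up to a mesh-regularity factor; alternatively one rewrites $\sum_\sig m_\sig d_\sig$ as $\sum_{K}\sum_{\sig\in\Ee_K} m_\sig d_{K\sig} = d\sum_K m_K$ directly via $m_{\Delta_{K\sig}}=m_\sig d_{K\sig}/\,\cdot$, avoiding regularity altogether. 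Everything else is a routine application of Cauchy--Schwarz and the already-established estimate \eqref{eq:Flux.L2}.
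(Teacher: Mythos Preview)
Your proposal is correct and follows essentially the same route as the paper: multiply the discrete mass balance by $\dt^n\fntest_K^n$, sum, perform the discrete integration by parts, apply Cauchy--Schwarz, and combine estimate~\eqref{eq:Flux.L2} with the bound $\sum_n\dt^n\sum_\sig a_\sig(\fntest_K^n-\fntest_{K\sig}^n)^2\le C\lVert\nabla\fntest\rVert_\infty^2$ (which the paper simply cites to~\cite[\S4.4]{ACM17} rather than re-deriving as you do). The small slips --- a missing minus sign after the summation by parts and the factor $d$ versus $1/d$ in the diamond volume relation $m_{\Delta_\sig}=\tfrac1d m_\sig d_\sig$ --- are inconsequential for the argument.
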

\begin{proof}
Multiplying \eqref{eq:scheme.mass} by $\dtn ~\fntest_K^n$, summing over $K\in\Tt$ and $n\in\{1, \cdots, N\}$ and carrying out discrete integration by parts, we end up with  
\[
\Aa:= \! \sum_{n=1}^N \! \sum_{K\in\Tt} m_K \phi_K (s_K^n -s_K^{n-1})\fntest_K^n 
= - \! \sum_{n=1}^N \dtn \! \sum_{\sig\in\Ee} a_\sigma \lambda_\sigma \eta_\sigma^n 
(\phead_K^n - \phead_{K\sig}^n)(\fntest_K^n-\fntest_{K\sig}^n).
\]
Applying the Cauchy-Schwarz inequality and using~\eqref{eq:Flux.L2}, we get
\be
\Aa^2 \leq \cter{c:dissip.2} \frac{\max_i \lambda_i}\mu 
 \sum_n \dtn \sum_{\sig\in\Ee} a_\sigma (\fntest_K^n-\fntest_{K\sig}^n)^2.
\ee
The conclusion \eqref{boundS} is then reached by means of the property (see \cite[Section 4.4]{ACM17})
\[
\sum_{n=1}^N \dtn \sum_{\sig\in\Ee} a_\sigma (\fntest_K^n-\fntest_{K\sig}^n)^2 \leq C \lVert \grad \fntest \rVert_{L^\infty(Q_T)^d}^2
\]
for some $C$ depending only on $\O$, $T$ and the mesh regularity $\regul_{\Tt}$.
\end{proof}

\subsection{Existence of a solution to the scheme}
The statements of the previous section are all uniform w.r.t. the mesh and are meant to help us passing to the limit in the next section. In contrast, the next lemma provides a bound on the pressure that depends on the mesh size and on the time-step. This property is needed in the process of ensuring the existence of a solution to the numerical scheme. 

\begin{lem}\label{lem:Boundedness}
There exist two constants $\ctel{c:pb}$, $\ctel{c:pb2}$ depending on $\Tt$, $\dt^n$ as well as on the data of the continuous model 
$\lambda$, $\mu$, $p^{\rm D}$, $\psi$, $\regul$, $\Omega$, $T$, $\phi$,
$\lVert \Ss_i \rVert_{L^1(\R_-)}$ and $\lVert \sqrt{\eta_i \circ \Ss_i} \rVert_{L^1(\R_-)}$, $1\leq i\leq I$, such that 
\be\label{eq:Boundedness}
-\cter{c:pb} \leq p_K^n \leq \cter{c:pb2},
\quad \forall K \in \Tt,
\; n \in \{1, \dots, N\}.
\ee
\end{lem}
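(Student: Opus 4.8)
The plan is to establish the two-sided $L^\infty$ bound on the discrete pressure at a \emph{fixed} mesh and time grid by exploiting the a priori energy-type estimates already proved, together with the coercivity features of the Kirchhoff transforms. The starting point is that, on a fixed mesh with finitely many cells and finitely many time steps, the left-hand sides of \eqref{eq:L2.Upsilon} and \eqref{eq:dissip.Upsilon} are finite sums of nonnegative terms, each weighted by a positive constant ($m_K$, $a_\sig$, $\dtn$) that depends only on the mesh and time step. Hence each individual term is bounded: for every $K$ and $n$,
\[
m_K \dt^n |\kirmin(p_K^n)|^2 \leq \cter{c:L2.Upsilon}, \qquad
\dt^n a_\sig (\kirmin(p_K^n) - \kirmin(p_{K\sig}^n))^2 \leq \cter{c:dissip.max},
\]
which yields a bound $|\kirmin(p_K^n)| \leq C(\Tt,\bdt)$ uniform over $K\in\Tt$ and $n$. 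Since $\kirmin$ is a strictly increasing continuous function (its derivative $\min_i \sqrt{\lambda_i \eta_i\circ\Ss_i}$ is positive everywhere), this gives a two-sided bound on $\kirmin(p_K^n)$.

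The remaining work is to transfer this bound on $\kirmin(p_K^n)$ into a bound on $p_K^n$ itself, and here one must distinguish the behavior of $\kirmin$ at $+\infty$ and at $-\infty$. For the upper bound: for $p\geq 0$ one has $\kirmin(p) = p \min_i \sqrt{\lambda_i/\mu}$, which is linear and unbounded, so $\kirmin(p_K^n) \leq C$ directly forces $p_K^n \leq \cter{c:pb2}$ with $\cter{c:pb2} = \max\{0,\, C/\min_i\sqrt{\lambda_i/\mu}\}$. For the lower bound the situation is the opposite: by assumption \eqref{eq:hyp.xi}, $\kirmin$ is \emph{bounded} on $\R_-$, so controlling $\kirmin(p_K^n)$ from below does not by itself prevent $p_K^n \to -\infty$. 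This is the main obstacle, and it must be overcome by a different mechanism. The natural route is to use the scheme equation \eqref{eq:scheme.mass} together with the saturation relation \eqref{eq:scheme.capi}: if $p_K^n$ were extremely negative, then $s_K^n = \Ss_K(p_K^n)$ would be close to $0$, and one must derive a contradiction with the mass balance. More precisely, I would argue by considering, at a fixed time level $n$, the cell $K_\star$ realizing $\min_K p_K^n$; at that cell all the incoming flux differences $\phead_{K_\star}^n - \phead_{K_\star\sig}^n$ have a controlled sign (up to the bounded gravity potential $\psi$), and one can bound $\sum_\sig m_\sig F_{K_\star\sig}^n$ from below, hence bound $s_{K_\star}^n - s_{K_\star}^{n-1}$, hence prevent $s_{K_\star}^n$ from being too small — using that $s_K^{n-1}\in[0,1]$ and that the transmissivities and mobilities, while possibly small, are strictly positive on a fixed mesh once one knows the neighboring pressures are not all $-\infty$.

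A cleaner alternative, which I would in fact prefer to carry out, is an \emph{a posteriori} argument combined with the fact that the scheme equations plus the strictly increasing, surjective (onto a half-line) nature of each $\Ss_K$ confine the solution: one shows that any solution $\bp^n$ of \eqref{eq:scheme.capi}--\eqref{eq:scheme.upwind} automatically satisfies the energy estimate of Proposition~\ref{prop:dissip} and Corollary~\ref{cor:xi_Upsilon}, hence $\kirmin(p_K^n)$ is bounded above by $C(\Tt,\bdt)$, giving the upper bound on $p_K^n$ as above; and then one uses a discrete maximum-principle-flavored estimate on \eqref{eq:scheme.mass} at the minimizing cell, exploiting that the pressures are already bounded above everywhere, so the mobilities $\eta_\sig^n$ appearing at $K_\star$ are bounded below by a positive constant depending on the (now known) finite upper bound and on the mesh, to conclude that $p_K^n \geq -\cter{c:pb}$. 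I expect the bookkeeping at the minimizing cell — carefully handling Dirichlet, Neumann and interface edges, and the interplay between $\phead$ and $p$ through the bounded $\psi$ — to be the most delicate part, but it is entirely elementary once the structure is set up; everything else follows from the already-established estimates and the monotonicity of the nonlinearities.
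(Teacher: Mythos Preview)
Your treatment of the \emph{upper} bound is correct and coincides with the paper's: from \eqref{eq:L2.Upsilon} each term $\dt^n m_K |\kirmin(p_K^n)|^2$ is bounded, and since $\kirmin(p)=p\sqrt{\min_i\lambda_i/\mu}$ for $p\geq 0$ this immediately caps $p_K^n$ from above.

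The \emph{lower} bound, however, contains a genuine gap. In your ``cleaner alternative'' you write that, once the pressures are bounded above, ``the mobilities $\eta_\sig^n$ appearing at $K_\star$ are bounded below by a positive constant depending on the (now known) finite upper bound''. This is the wrong direction: $\eta_i\circ\Ss_i$ is nondecreasing and vanishes as $p\to-\infty$, so an upper bound on $p$ gives an \emph{upper} bound on the mobility, not a lower one. At the minimizing cell $K_\star$ the upwind mobility on each edge is either $\eta_{K_\star}(s_{K_\star}^n)$, which is tiny if $p_{K_\star}^n$ is very negative, or $\eta_{K_\star\sig}(s_{K_\star\sig}^n)$, which is tiny if the neighbour is also very negative. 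Nothing rules this out: all cells could simultaneously carry large negative pressures while still satisfying the $\kirmin$ bounds (since $\kirmin$ is bounded on $\R_-$) and while keeping all fluxes and the mass balance small. Your first sketch, via the mass balance at $K_\star$, runs into the same wall: with all mobilities near zero, the fluxes are near zero and \eqref{eq:scheme.mass} merely says $s_{K_\star}^n\approx s_{K_\star}^{n-1}$, which is no contradiction. What is missing from both variants is the one piece of data that anchors the pressure away from $-\infty$: the Dirichlet boundary.

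The paper's argument is different and uses this anchor explicitly. It propagates a lower bound inward from $\Gamma^{\rm D}$ via the estimate \eqref{eq:dissip.max}, whose weight is $\ov\eta_\sig^n=\max_{\Jj_\sig^n}\widecheck\eta_\sig=\widecheck\eta_\sig(p_K^n\top p_{K\sig}^n)$ (monotonicity of $\widecheck\eta_\sig$). The point is that $\ov\eta_\sig^n$ is controlled by the \emph{larger} of the two endpoint pressures. So if one already knows $p_{K\sig}^n\geq\pi$ for some finite $\pi$ (initially $\pi=\inf p^{\rm D}$ at a Dirichlet edge), then $\ov\eta_\sig^n\geq\widecheck\eta_\sig(\pi)>0$, and \eqref{eq:dissip.max} yields
\[
(p_K^n-p_{K\sig}^n)^2\;\leq\;\frac{\cter{c:dissip.max}}{\dt^n\,a_\sig\,\widecheck\eta_\sig(\pi)},
\]
hence a finite lower bound on $p_K^n$. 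Iterating across edges of the connected mesh carries the bound to every cell. This step-by-step mechanism---bounding the mobility from below using the \emph{known} side of the edge---is precisely what your minimizing-cell argument lacks.
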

\begin{proof}
From~\eqref{eq:L2.Upsilon} and from $\kirmin(p) = p \sqrt{\min_i{\lambda_i}/\mu}$ for $p\geq 0$, we deduce that
\[
p_K^n \leq \sqrt{\frac{\mu \cter{c:L2.Upsilon}}{\dt^n m_K \min_i \lambda_i}}, 
\qquad \forall K\in\Tt, \; 1 \leq n \leq N .
\]
Hence, the upper-bound $\cter{c:pb2}$ is found by maximizing the right-hand side over $K \in \Tt$ and $n \in \{1,\dots, N\}$.

To show that $p_K^n$ is bounded from below, we employ a strategy that was developed in~\cite{cancesGuichard2016EntropyScheme} and extended to the case of Richards' equation in~\cite[Lemma 3.10]{Ahmed_M2AN}. From~\eqref{eq:scheme.pD}, \eqref{eq:p_Dir} and the boundedness of $p^{\rm D}$, it is easy to see that
\[
p_\sig^n \geq \inf_{x \in \p\O} p^{\rm D}(x), \quad \forall \sig \in \Ee_{\rm ext}^{\rm D}.
\]
Estimate~\eqref{eq:dissip.max} then shows that for all $K\in\Tt$ such that 
$\Ee_K \cap  \Ee_{\rm ext}^{\rm D} \neq \emptyset$, we have
\[
p_K^n \geq p_\sig^n - \sqrt{\frac{\cter{c:dissip.max}}{\dt^n a_\sig \widecheck{\eta}_\sig (p_\sig^n)}}=: \pi_K^n, \quad 
\forall \sig \in \Ee_K \cap  \Ee_{\rm ext}^{\rm D}.
\]
The quantity $\pi_K^n$ is well-defined, since $\widecheck{\eta}_\sig(p^n_\sig) > 0$ for $p^n_\sig > -\infty$, and does not depend on time, as $p^{\rm D}$ does not either. Furthermore, if $p_K^n$ is bounded from below by some $\pi_K$, then the pressure in 
all its neighboring cells $L\in\Tt$ such that $\sig = K|L \in \Ee_K$ is bounded from below by 
\[
p_L^n \geq \pi_K^n -  \sqrt{\frac{\cter{c:dissip.max}}{\dt^n a_\sig \widecheck{\eta}_\sig (\pi_K^n)}} =: \pi_L^n. 
\]
Again, $\pi_L^n$ is well-defined owing to $\widecheck{\eta}_\sig(\pi_K^n) > 0$. Since the mesh is finite and since the domain is connected, only a finite number of edge-crossings is required to create a path from a Dirichlet boundary edge $\sig \in \Ee_{\rm ext}^{\rm D}$ to any prescribed cell $K \in \Tt$. Hence, the lower bound $\cter{c:pb}$ is found by minimizing $\pi^n_K$ over $K\in\Tt$ and $n\in\{1, \ldots,N\}$. 
\end{proof}

Lemma~\ref{lem:Boundedness} is a crucial step in the proof of the existence of a solution 
$\bp^n = (p_K^n)_{K\in\Tt}$ to the scheme~\eqref{eq:scheme.capi}--\eqref{eq:p_Dir}.
\begin{prop}\label{prop:existence}
Given $\bs^{n-1} = (s_K^{n-1})_{K\in\Tt} \in [0,1]^\Tt$, there exists a solution $\bp^n \in \R^\Tt$ to the scheme~\eqref{eq:scheme.capi}--\eqref{eq:p_Dir}.
\end{prop}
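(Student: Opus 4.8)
The plan is to prove existence of a solution $\bp^n$ to the nonlinear system by a topological degree (or Brouwer fixed-point) argument, exploiting the a priori bounds already established. First I would fix $n$ and regard the previous saturation $\bs^{n-1} \in [0,1]^\Tt$ as given data. The unknown is $\bp^n \in \R^\Tt$, and the scheme \eqref{eq:scheme.capi}--\eqref{eq:p_Dir} defines a map $\Ff: \R^\Tt \to \R^\Tt$ whose $K$-th component is the residual
\[
\Ff_K(\bp^n) = m_K \phi_K \frac{\Ss_K(p_K^n) - s_K^{n-1}}{\dt^n} + \sum_{\sig \in \Ee_K} m_\sig F_{K\sig}^n,
\]
with $F_{K\sig}^n$ given by \eqref{eq:scheme.flux}--\eqref{eq:scheme.upwind}. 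A solution to the scheme is exactly a zero of $\Ff$. The key point is that $\Ff$ is continuous: $\Ss_K$ is absolutely continuous hence continuous, and while the upwinding \eqref{eq:scheme.upwind} involves a case distinction, the symmetric choice at $\phead_K^n = \phead_{K\sig}^n$ makes $\eta_\sig^n$ depend continuously on $\bp^n$ (the two branches agree at the switching hyperplane), so each $F_{K\sig}^n$ is continuous in $\bp^n$.

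Next I would invoke Lemma~\ref{lem:Boundedness}: any solution $\bp^n$ of $\Ff(\bp^n) = \0$ satisfies the mesh-dependent bound $-\cter{c:pb} \leq p_K^n \leq \cter{c:pb2}$ for all $K \in \Tt$. This confines all possible zeros to the compact box $B = \prod_{K\in\Tt}[-\cter{c:pb}, \cter{c:pb2}] \subset \R^\Tt$. The standard technique is then to introduce a homotopy interpolating between $\Ff$ and a map with an obvious, unique zero. A convenient choice is $H(\bp, \tau) = \tau \Ff(\bp) + (1-\tau)(\bp - \bp^\star)$ for $\tau \in [0,1]$, where $\bp^\star$ is any fixed point in the interior of $B$ --- for instance $\bp^\star = \0$ after enlarging the box if needed, or one can keep $\Ss_K$ in the parabolic term and linearise the flux. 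One must check that $H(\cdot, \tau)$ has no zero on $\p B$ for any $\tau \in [0,1]$: for $\tau \in (0,1]$ this follows because a zero of $H(\cdot,\tau)$ satisfies a scheme of the same structure (with the capillary term scaled and a linear perturbation), to which the very same argument as in Lemma~\ref{lem:Boundedness} --- propagation of the lower bound from Dirichlet edges through the connected mesh together with the energy-type estimate \eqref{eq:dissip.max}, whose derivation is insensitive to such zeroth-order perturbations --- applies uniformly in $\tau$, giving the same a priori box; for $\tau = 0$ the unique zero $\bp^\star$ lies in the interior. Hence the topological degree $\deg(H(\cdot,\tau), \mathring B, \0)$ is well-defined and constant in $\tau$, and equals $\deg(\Id - \bp^\star, \mathring B, \0) = \pm 1 \neq 0$ at $\tau = 0$. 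Therefore $\deg(\Ff, \mathring B, \0) \neq 0$, which forces the existence of $\bp^n \in \mathring B$ with $\Ff(\bp^n) = \0$.

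The main obstacle is arranging the homotopy so that the a priori estimate of Lemma~\ref{lem:Boundedness} survives uniformly along it: one needs the energy estimate \eqref{eq:dissip.max} and the lower-bound propagation argument to remain valid for $H(\cdot,\tau) = \0$, which requires choosing the homotopy to preserve the monotone, upwinded, conservative structure of the flux (a naive linear interpolation of the whole system could destroy this). A clean way around this, which I would actually carry out, is to homotope only the accumulation term and the source, say $H(\bp,\tau)$ with $K$-th component $m_K\phi_K (\Ss_K(p_K^n) - \tau s_K^{n-1})/\dt^n + \sum_\sig m_\sig F_{K\sig}^n$ and, for the reference problem at $\tau$ near $0$, note that $F_{K\sig}^n$ still vanishes on Neumann edges and the Dirichlet data is untouched, so the Lemma~\ref{lem:Boundedness} argument goes through verbatim with $\bs^{n-1}$ replaced by $\tau \bs^{n-1} \in [0,1]^\Tt$. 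One final routine verification is that $\Ff$ is proper on $B$ and that the degree is computed relative to a domain containing all zeros; both are immediate from the box bound. Alternatively, a slightly softer route avoiding degree theory is to truncate $\Ss_K$ and the problem to the box, apply Brouwer to the resulting continuous self-map $\bp \mapsto \bp - \Ff(\bp)$ suitably rescaled, and then remove the truncation using Lemma~\ref{lem:Boundedness} to see the fixed point lies strictly inside the box; either way the crux is the uniform a priori bound, already in hand.
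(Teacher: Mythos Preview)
Your overall strategy---topological degree plus the a~priori bounds of Lemma~\ref{lem:Boundedness}---is exactly the paper's approach, but neither of your two proposed homotopies actually closes the argument. For the first homotopy $H(\bp,\tau)=\tau\Ff(\bp)+(1-\tau)(\bp-\bp^\star)$, a zero satisfies $\Ff(\bp)=-\tfrac{1-\tau}{\tau}(\bp-\bp^\star)$, and when you run the energy estimate (multiply by $p_K-p_K^{\rm D}$ and sum) the extra term carries the coefficient $\tfrac{1-\tau}{\tau}$, which blows up as $\tau\to 0$; the bound is therefore not uniform on $[0,1]$ and you cannot exclude zeros on $\partial B$ near $\tau=0$. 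For the second homotopy, replacing $s_K^{n-1}$ by $\tau s_K^{n-1}$ does preserve the structure and the bound, but at $\tau=0$ you land on the \emph{same} nonlinear, degenerate scheme with initial datum $\bs^{n-1}=\0$. You give no argument that this endpoint problem has nonzero degree, and there is no obvious one: it is neither linear nor manifestly uniquely solvable.

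The paper's homotopy fixes both issues simultaneously. It replaces $\eta_i$ by $\eta_i^{(\gamma)}(s)=(1-\gamma)/\mu+\gamma\,\eta_i(s)$ and multiplies the accumulation term by $\gamma$. At $\gamma=0$ the mobility is the constant $1/\mu$, the accumulation term disappears, and the system reduces to a linear discrete elliptic problem with Dirichlet data---invertible, hence of degree $\pm 1$. For $\gamma\in(0,1]$ the scheme retains its upwinded, conservative structure, so the estimates of Proposition~\ref{prop:dissip} and Lemma~\ref{lem:dissip.max} go through verbatim (indeed more easily, since $\eta_i^{(\gamma)}$ is bounded below by $(1-\gamma)/\mu$), and the propagation argument of Lemma~\ref{lem:Boundedness} then yields a box that is uniform in $\gamma$. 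That is the missing ingredient: you need the endpoint to be a genuinely simpler problem, and the way to get there while keeping the energy structure is to deform the mobility, not the data.

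One minor correction: your continuity argument for $\eta_\sig^n$ is not quite right. The two upwind branches $\eta_K(s_K^n)$ and $\eta_{K\sig}(s_{K\sig}^n)$ do \emph{not} agree on the switching hyperplane $\{\phead_K^n=\phead_{K\sig}^n\}$ when $K$ and $K\sig$ lie in different rocktypes, so $\eta_\sig^n$ itself is discontinuous. What is continuous is the flux $F_{K\sig}^n$, because the factor $(\phead_K^n-\phead_{K\sig}^n)$ vanishes precisely where $\eta_\sig^n$ jumps. This is enough for $\Ff$ to be continuous, which is all you need.
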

The proof relies on a standard topological degree argument and is omitted here. However, we make the homotopy explicit for readers' convenience. Let $\gamma \in [0,1]$ be the homotopy parameter. We define the nondecreasing functions $\eta_i^{(\gamma)}:[0,1] \to \R_+$ by setting $\eta_i^{(\gamma)}(s) = (1-\gamma)/\mu + \gamma \eta_i(s)$ for $s \in [0,1]$, and we seek a solution $\bp^{(\gamma)} = {(p_K^{(\gamma)})}_{K\in\Tt}$ to the problem
\begin{subequations}\label{eq:homotopy}
\be
\gamma  m_K \phi_K \frac{\Ss_K(p_K^{(\gamma)}) - s_K^{n-1}}{\dt_n} + 
\sum_{\sig \in \Ee_K} m_\sig F_{K\sig}^{(\gamma)} = 0, \qquad K \in \Tt, \; \gamma \in [0,1], 
\ee
where the fluxes $F_{K\sig}^{(\gamma)}$ are defined by
\be
F_{K\sig}^{(\gamma)}= \frac{1}{d_\sig}\lambda_\sig \eta_\sig^{(\gamma)} 
\big( \phead_K^{(\gamma)} - \phead_{K\sig}^{(\gamma)}  \big), \qquad \sig \in \Ee_K, \; K \in \Tt, \;\gamma \in [0,1]
\ee
with $\phead^{(\gamma)} = p^{(\gamma)} + \psi$ and using the upwind mobilities 
\be
\eta_\sig^{(\gamma)} = \begin{cases}
\, \eta_K^{(\gamma)}(\Ss_K(p_K^{(\gamma)})) & \text{if }\; \phead_K^{(\gamma)} > \phead_{K\sig}^{(\gamma)} , \\
\, \frac{1}{2}(\eta_K^{(\gamma)}(\Ss_K(p_K^{(\gamma)})) + \eta_{K\sig}^{(\gamma)}(\Ss_{K\sig}(p_K^{(\gamma)})))  &  
\text{if }\; \phead_K^{(\gamma)} = \phead_{K\sig}^{(\gamma)} ,\\
\, \eta_{K\sig}^{(\gamma)}(\Ss_{K\sig}(p_K^{(\gamma)})) & \text{if }\; \phead_K^{(\gamma)} < \phead_{K\sig}^{(\gamma)} .
\end{cases}
\ee
\end{subequations}
At the Dirichlet boundary edges, we still set $p_\sig^{(\gamma)} = p_\sig^{\rm D}$. 
For $\gamma = 0$, the system is linear and invertible, 
while for $\gamma = 1$, system \eqref{eq:homotopy} coincides with the original system \eqref{eq:scheme.capi}--\eqref{eq:p_Dir}. A priori estimates on $\bp^{(\gamma)}$ that are uniform w.r.t. $\gamma \in [0,1]$ (but not uniform w.r.t. $\Tt$ nor $\dt^n$) can be derived on the basis of what was exposed previously, so that one can unfold Leray-Schauder's machinery~\cite{LS34, Dei85} to prove the existence of (at least) one solution to the scheme.

\subsection{Uniqueness of the discrete solution}\label{ssec:uniqueness}
To complete the proof of Theorem~\ref{thm:1}, it remains to show that the solution to the scheme is unique. This is the purpose of the following proposition.
\begin{prop}\label{prop:uniqueness}
Given $\bs^{n-1} = (s_K^{n-1})_{K\in\Tt} \in [0,1]^\Tt$, the solution $\bp^n \in \R^\Tt$ to the scheme~\eqref{eq:scheme.capi}--\eqref{eq:p_Dir} is unique.
\end{prop}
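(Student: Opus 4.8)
\textbf{Proof strategy for Proposition~\ref{prop:uniqueness}.}
The plan is to argue by contradiction, or equivalently to show that the difference of two solutions vanishes by testing the equations against a suitable quantity that makes a monotone, coercive structure appear. Suppose $\bp^n$ and $\wt\bp{}^n$ are two solutions of \eqref{eq:scheme.capi}--\eqref{eq:p_Dir} associated with the same data $\bs^{n-1}$, and write $\wt s_K^n = \Ss_K(\wt p_K^n)$. Subtracting the two instances of the mass balance \eqref{eq:scheme.mass}, multiplying by $\dt^n(p_K^n - \wt p_K^n)$, summing over $K\in\Tt$, and performing the discrete integration by parts, one obtains an identity of the form $\Aa + \Bb = 0$, where
\[
\Aa = \sum_{K\in\Tt} m_K \phi_K (\Ss_K(p_K^n) - \Ss_K(\wt p_K^n))(p_K^n - \wt p_K^n)
\]
and $\Bb$ gathers the flux differences tested against $(p_K^n - \wt p_K^n) - (p_{K\sig}^n - \wt p_{K\sig}^n)$, with the Dirichlet contributions cancelling since $p_\sig^n = \wt p_\sig^n = p_\sig^{\rm D}$ on $\Ee_{\rm ext}^{\rm D}$. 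The term $\Aa$ is nonnegative because each $\Ss_K$ is nondecreasing, and it is zero precisely when $\Ss_K(p_K^n) = \Ss_K(\wt p_K^n)$ for every $K$.

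The heart of the matter is to show that $\Bb \geq 0$ with equality forcing $p_K^n = \wt p_K^n$ on the cells where $\Ss_K$ is strictly increasing, and then to propagate this equality to the whole mesh. The flux difference on an edge $\sig = K|L$ is
\[
\lambda_\sig\big(\eta_\sig^n(\phead_K^n - \phead_{K\sig}^n) - \wt\eta_\sig^n(\wt\phead_K^n - \wt\phead_{K\sig}^n)\big),
\]
and since $\psi$ is the same for both solutions, $\phead_K^n - \phead_{K\sig}^n = (p_K^n - p_{K\sig}^n) + (\psi_K - \psi_{K\sig})$ shares the same offset. The delicate point is that $\eta_\sig^n$ and $\wt\eta_\sig^n$ are \emph{different} upwinded mobilities because the upwind direction may differ between the two solutions; nevertheless, the map $q\mapsto \eta_\sig(q)\, q$ obtained by freezing the upwind rule of \eqref{eq:scheme.upwind} --- i.e. $q\mapsto \eta_K\circ\Ss_K$ upstream of the head difference --- is nondecreasing in the head increment $q = \phead_K^n-\phead_{K\sig}^n$ for each \emph{fixed} saturation, and more importantly the genuine upwind flux function $(q)\mapsto \mathcal{F}_\sig(q)$, where the mobility is taken at $\Ss_K(p)$ or $\Ss_{K\sig}(p)$ according to $\operatorname{sign}(q)$ and $p$ is itself recovered from the head through $p = q - (\psi_K-\psi_{K\sig})$ evaluated on the appropriate side, is a \emph{monotone} (nondecreasing) function of the single scalar $q$. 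This monotonicity, which is exactly the mechanism underlying the existence proof and the $L^2$ flux control, gives $\Bb \geq 0$, and the strict monotonicity of $\Ss_K$ on $(-\infty,\ov p_{i(K)}]$ together with the strict monotonicity of $\eta_i\circ\Ss_i$ yields that each summand of $\Bb$ vanishes only if $p_K^n - p_{K\sig}^n = \wt p_K^n - \wt p_{K\sig}^n$.

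From $\Aa = \Bb = 0$ one concludes as follows: on cells $K$ where $p_K^n \leq \ov p_{i(K)}$, the vanishing of $\Aa$ forces $p_K^n = \wt p_K^n$; for cells in the fully saturated regime $p_K^n, \wt p_K^n \geq \ov p_{i(K)}$, one uses instead the edge equalities $p_K^n - p_{K\sig}^n = \wt p_K^n - \wt p_{K\sig}^n$ to propagate, via a connectedness argument along a path to a Dirichlet edge (where $p_\sig^n = \wt p_\sig^n$), exactly as in the lower-bound propagation of Lemma~\ref{lem:Boundedness}, that $p_K^n = \wt p_K^n$ everywhere. Combining the two cases gives $\bp^n = \wt\bp{}^n$.

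\textbf{Main obstacle.} The step I expect to require the most care is establishing that $\Bb \geq 0$ in the presence of the upwinding: one must verify that even when the two solutions disagree on the upwind direction across some edge, the difference of the two upwinded fluxes, tested against the difference of pressure jumps, is still $\geq 0$. The clean way is to introduce the scalar flux function $q\mapsto \mathcal{F}_\sig(q)$ associated with the frozen-saturation-plus-upwind rule and check it is nondecreasing (this is where $\eta_i$ increasing, $\Ss_i$ nondecreasing, and the upwind choice \eqref{eq:scheme.upwind} combine), so that $(\mathcal{F}_\sig(q) - \mathcal{F}_\sig(\wt q))(q - \wt q)\geq 0$; some bookkeeping is needed because $q$ and $\wt q$ differ from the pressure jumps only by the common constant $\psi_K - \psi_{K\sig}$, so monotonicity in $q$ transfers to the pressure-jump difference. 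Once this monotonicity lemma is in place, the rest is the soft connectedness argument borrowed from Lemma~\ref{lem:Boundedness}.
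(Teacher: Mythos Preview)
Your approach has a genuine gap at the crucial step, namely the claim that $\Bb \geq 0$. The edge flux $F_{K\sig}^n$ is \emph{not} a function of the single scalar $q=\phead_K^n-\phead_{K\sig}^n$: the upwind mobility is $\eta_K(\Ss_K(p_K^n))$ or $\eta_{K\sig}(\Ss_{K\sig}(p_{K\sig}^n))$, and these depend on $p_K^n$ and $p_{K\sig}^n$ individually, which cannot be recovered from the head difference $q$ alone. So the sentence ``$p$ is itself recovered from the head through $p=q-(\psi_K-\psi_{K\sig})$'' is ill-defined, and the scalar function $\mathcal F_\sig$ you describe does not exist. Worse, the edgewise inequality you need is false in general: take a single homogeneous edge with $\psi_K=\psi_{K\sig}$, an increasing $g=\eta\circ\Ss$ with $g(0)=1$, $g(1)=3$, and the two configurations $(p_K,p_L)=(1,0)$ and $(\wt p_K,\wt p_L)=(0,-2)$. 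Both have $\phead_K>\phead_{K\sig}$, so $F=g(1)\cdot 1=3$ and $\wt F=g(0)\cdot 2=2$, whence $(F-\wt F)\big((p_K-\wt p_K)-(p_L-\wt p_L)\big)=1\cdot(1-2)=-1<0$. Thus the upstream mobility flux is order-preserving (monotone in the lattice sense) but is \emph{not} an $L^2$-monotone operator, and testing the difference of the two schemes against $p_K^n-\wt p_K^n$ does not produce a sign.

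The paper avoids this by exploiting precisely the lattice monotonicity: it observes that $\scheme_K^n$ is nondecreasing in $p_K^n$ and nonincreasing in each $p_{K\sig}^n$, takes the componentwise max and min of the two solutions, shows $\scheme_K^n(\bp^n\top\wt\bp^n)\leq 0\leq \scheme_K^n(\bp^n\bot\wt\bp^n)$, and sums the difference over $K$. This yields an $L^1$-type inequality $\sum_K m_K\phi_K|s_K^n-\wt s_K^n|+\sum_{\sig\in\Ee_{\rm ext}^{\rm D}}a_\sig\lambda_\sig\Rr_\sig^n\leq 0$ with $\Rr_\sig^n\geq 0$, giving first $s_K^n=\wt s_K^n$ everywhere and $p_K^n=\wt p_K^n$ on cells touching $\Gamma^{\rm D}$; equality of the pressures elsewhere is then propagated by a discrete maximum principle on $\phead_K^n-\wt\phead_K^n$. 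If you want to repair your argument, you should switch to this $L^1$/max--min mechanism rather than the $L^2$ testing.
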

\begin{proof}
The proof heavily rests upon the monotonicity properties inherited from the upwind choice \eqref{eq:scheme.upwind} for the mobilities.
Indeed, due to the upwind choice of the mobility, the flux $F_{K\sig}^n$ is a function of $p_K^n$ and $p_{K\sig}^n$ that is nondecreasing w.r.t. $p_K^n$ and nonincreasing w.r.t. $p_{K\sig}^n$. Moreover, by virtue of the monotonicity of $\Ss_K$, the discrete volume balance~\eqref{eq:scheme.mass} can be cast under the abstract form
\be\label{eq:Hh}
\scheme_K^n(p_K^n, (p_{K\sig}^n)_{\sig \in \Ee_K}) = 0, \qquad \forall K\in \Tt, 
\ee
where $\scheme_K^n$ is nondecreasing w.r.t its first argument $p_K^n$ and nonincreasing w.r.t each of the remaining variables $(p_{K\sig}^n)_{\sig \in \Ee_K}$.

Let $\wt{\bp}^n = \left(\wt p_K^n\right)_{K\in\Tt}$ be another solution to the system  \eqref{eq:scheme.capi}--\eqref{eq:p_Dir}, 
i.e., 
\be\label{eq:Hh.wt}
\scheme_K^n (\wt p_K^n, (\wt p_{K\sig}^n)_{\sig \in \Ee_K} ) = 0, \qquad \forall K\in \Tt . 
\ee
The nonincreasing behavior of $\scheme_K^n$ w.r.t. all its variables except the first one implies that 
\[
\scheme_K^n (p_K^n, (p_{K\sig}^n \top \wt{p}_{K\sig}^n)_{\sig \in \Ee_K} ) \leq  0, \qquad 
\scheme_K^n (\wt{p}_K^n, (p_{K\sig}^n \top \wt{p}_{K\sig}^n)_{\sig \in \Ee_K} ) \leq  0,
\]
for all $K\in \Tt$, where $a\top b = \max(a,b)$.
Since $p_K^n\top \wt p_K^n$ is either equal to $p_K^n$ or to $\wt p_K^n$, we infer from the above inequalities that 
\be\label{eq:Hh.top}
\scheme_K^n (p_K^n\top \wt p_K^n, (p_{K\sig}^n \top \wt p_{K\sig}^n)_{\sig \in \Ee_K} ) \leq  0, 
\qquad \forall K \in \Tt. 
\ee
By a similar argument, we can show that 
\be\label{eq:Hh.bot}
\scheme_K^n (p_K^n\bot \wt p_K^n, (p_{K\sig}^n \bot \wt p_{K\sig}^n)_{\sig \in \Ee_K}) \geq  0, 
\qquad \forall K \in \Tt, 
\ee
where $a\bot b = \min(a, b)$. Subtracting~\eqref{eq:Hh.bot} from \eqref{eq:Hh.top} and summing over $K\in\Tt$, we find
\be\label{eq:uniqueness.1}
\sum_{K\in\Tt} m_K \phi_K  \frac{|s_K^n - \wt s_K^n|}{\Delta t^n} 
+ \sum_{\sig \in \Ee_{\rm ext}^{\rm D}}  a_\sig \lambda_\sig \Rr_\sig^n 
\leq 0, 
\ee
where $s_K^n = \Ss_K(p_K^n)$, $\wt{s}_K^n = \Ss_K(\wt{p}_K^n)$ and
\begin{align}
\Rr_\sig^n & = \eta_K(s_K^n \top \wt{s}_K^n) (\phead_K^n \top \wt{\phead}_K^n - \phead_\sig^n )^+ - \eta_K(s_\sig^n)(\phead_\sig^n - \phead_K^n\top \wt \phead_K^n )^+ \nonumber\\
& -  \eta_K(s_K^n \bot \wt{s}_K^n) (\phead_K^n \bot \wt{\phead}_K^n - \phead_\sig^n )^+
+ \eta_K(s_\sig^n) (\phead_\sig^n - \phead_K^n\bot \wt \phead_K^n)^+ ,
\label{eq:residue1}
\end{align}
with $s_\sig^n = \Ss_K(p^n_\sig)$. The top line of \eqref{eq:residue1} expresses the upwinded flux of \eqref{eq:Hh.top}, while the bottom line of \eqref{eq:residue1} is the opposite of the upwinded flux of \eqref{eq:Hh.bot}. Note that, since $p^n_\sig = p^{\rm D}_\sig$ is prescribed at $\sig\in\Ee^{\rm D}_{\rm ext}$, we have $\phead^n_\sigma = \phead^n_\sigma \top \wt{\phead}^n_\sigma = \phead^n_\sigma \bot \wt{\phead}^n_\sigma$. Upon inspection of the rearrangement
\begin{align}
\Rr_\sig^n & = [\eta_K(s_K^n \top \wt{s}_K^n) - \eta_K(s_K^n \bot \wt{s}_K^n)] (\phead_K^n \top \wt{\phead}_K^n - \phead_\sig^n )^+ \nonumber\\
& + \eta_K(s_K^n \bot \wt{s}_K^n) [ (\phead_K^n \top \wt{\phead}_K^n - \phead_\sig^n )^+ - (\phead_K^n \bot \wt{\phead}_K^n - \phead_\sig^n )^+ ] \nonumber\\
& +\eta_K(s_\sig^n) \big[ (\phead_\sig^n - \phead_K^n\bot \wt{\phead}_K^n)^+ - (\phead_\sig^n - \phead_K^n\top \wt{\phead}_K^n )^+\big] , \label{eq:residue}
\end{align}
it is trivial that $\Rr_\sig^n \geq 0$. As a consequence, \eqref{eq:uniqueness.1} implies that $\Rr_\sig^n = 0$ for all $\sig \in \Ee_{\rm ext}^{\rm D}$ and that $s_K^n = \wt{s}_K^n$ for all $K\in\Tt$. At this stage, however, we cannot yet claim that $p_K^n = \wt{p}_K^n$, as the function $\Ss_K$ is not invertible.

Taking into account $s_K^n = \wt{s}_K^n$, the residue \eqref{eq:residue} becomes
\begin{align}
\Rr_\sig^n & =  \eta_K(s_K^n) [ (\phead_K^n \top \wt{\phead}_K^n - \phead_\sig^n )^+ - (\phead_K^n \bot \wt{\phead}_K^n - \phead_\sig^n )^+ ] \nonumber\\
& +\eta_K(s_\sig^n) \big[ (\phead_\sig^n - \phead_K^n\bot \wt{\phead}_K^n)^+ - (\phead_\sig^n - \phead_K^n\top \wt{\phead}_K^n )^+\big] ,
\end{align}
which can be lower-bounded by
\be\label{eq:residuelower}
\Rr_\sig^n  \geq \min(\eta_K(s_K^n),\, \eta_K(s_\sig^n)) |\phead_K^n - \wt \phead_K^n|
\ee
thanks to the algebraic identities $a^+ - (-a)^+=a$ and $a\top b - a\bot b = |a-b|$. In view of the lower-bound on the discrete pressures of Lemma~\ref{lem:Boundedness}, we deduce from \eqref{eq:Ss.1} that $s_K^n>0$ and $\wt s_K^n >0$. The increasing behavior of $\eta_K$ implies, in turn, that $\eta_K(s_K^n)>0$ and $\eta_K(\wt s_K^n)>0$. Therefore, the conjunction of $\Rr_\sig^n=0$ and \eqref{eq:residuelower} yields $\phead_K^n = \wt{\phead}_K^n$ and hence
$p_K^n = \wt{p}_K^n$ for all cells $K$ having a Dirichlet boundary edge, i.e.,  $\Ee_K \cap \Ee_{\rm ext}^{\rm D} \neq \emptyset$.

It remains to check that $p_K^n = \wt{p}_K^n$, or equivalently $\phead_K^n = \wt{\phead}_K^n$ for those cells $K\in\Tt$ that are far away from the Dirichlet part of the boundary. 
Subtracting~\eqref{eq:Hh.wt} from~\eqref{eq:Hh} and recalling that $s_K^n = \wt{s}_K^n$, we arrive at
\begin{align}
 \sum_{\sig \in \Ee_{K}} a_\sig\lambda_\sig 
\Big\{ \eta_K(s_K^n) & \big[ \left( \phead_K^n - \phead_{K\sig}^n \right)^+ -  ( \wt \phead_K^n - \wt \phead_{K\sig}^n )^+ \big] \nonumber\\
 +  \eta_{K\sig}(s_{K\sig}^n) & \big[  ( \wt \phead_{K\sig}^n - \wt \phead_K^n )^+ -  ( \phead_{K\sig}^n - \phead_K^n )^+\big] \Big\}  = 0. \label{eq:HJ}
\end{align}
Consider a cell $K\in\Tt$ where $\phead_K^n - \wt \phead_K^n$ achieves its maximal value, i.e., 
\be\label{eq:HJ.1}
\phead_K^n - \wt \phead_K^n \geq \phead_L^n - \wt \phead_L^n, \qquad \forall L\in\Tt. 
\ee
This entails that 
\[
\phead_K^n - \phead_{K\sig}^n \geq \wt \phead_K^n - \wt \phead_{K\sig}^n, \qquad \forall \sig \in \Ee_K, 
\]
so that the two brackets in the right-hand side of \eqref{eq:HJ} are nonnegative. In fact, they both vanish by the positivity of $\eta_K(s_K^n)$ and $\eta_{K\sig}(s_{K\sig}^n)$. As a result, $\phead_K^n - \phead_{K\sig}^n = \wt{\phead}_K^n - \wt{\phead}_{K\sig}^n$ for all $\sig \in \Ee_K$. This implies that $\phead_K^n - \wt \phead_K^n = \phead_L^n - \wt \phead_L^n$ for all the cells $L\in\Tt$ sharing an edge $\sig=K|L$ with $K$, and thus that the cell $L$ also achieves the maximality condition~\eqref{eq:HJ.1}. The process can then be repeated over and over again. Since $\Omega$ is connected, we deduce that $\phead_K^n - \wt \phead_K^n$ is constant over $K\in\Tt$. The constant is finally equal to zero since $\phead_K^n = \wt \phead_K^n$ on the cells having a Dirichlet edge. 
\end{proof}

\section{Convergence analysis}\label{sec:convergence}
Once existence and uniqueness of the discrete solution have been settled, the next question to be addressed is the convergence of the discrete solution towards a weak solution of the continuous problem, as the mesh-size and the time-step are progressively refined. In accordance with the general philosophy expounded in~\cite{eymardEtAl2000finiteVolumeMethod}, the proof is built on compactness arguments. We start by highlighting compactness properties in \S\ref{ssec:compact}, before identifying the limit values as weak solutions in \S\ref{ssec:identify}.

\subsection{Compactness properties}\label{ssec:compact}
Let us define $G_{\Ee_m,\bdt_m}: Q_T \to \R^d$ and $J_{\Ee_m,\bdt_m} : Q_T \to \R^d$ by
\be
G_{\Ee_m,\bdt_m}(t,x) = 
\begin{cases}
\; d \displaystyle\frac{\kirsqr_i(p_{K\sig}^n) - \kirsqr_i(p_K^n)}{d_\sig} n_{K\sig} ,
& \text{if }\; (t,x) \in (t_m^{n-1},t_m^n] \times \Delta_{\sig} , 
\; \\
\; 0 & \text{otherwise},
\end{cases}
\ee
for $\sig \in \Ee_{i,m}$, $1 \leq n \leq N_m$ and, respectively, 
\be
J_{\Ee_m,\bdt_m}(t,x) = d \displaystyle\frac{\kirmin(p_{K\sig}^n) - \kirmin(p_K^n)}{d_\sig} n_{K\sig}, \quad 
\text{if }\; (t,x) \in (t_m^{n-1},t_m^n] \times \Delta_{\sig},
\ee
for $\sig \in \Ee_{m}$, $1 \leq n \leq N_m$. We remind that $s_{\Tt_m,\bdt_m} = \Ss(p_{\Tt_m,\bdt_m},x)$ is the sequence of approximate saturation fields computed from that of approximate pressure fields $p_{\Tt_m,\bdt_m}$ by \eqref{eq:approx_sat}.

\begin{prop}\label{prop:compact}
There exists a measurable function $p:Q_T \to \R$ such that $\kirmin(p)-\kirmin(p^{\rm D}) \in L^2((0,T);V)$ and $\kirsqr_i(p) \in L^2((0,T);H^1(\O_i))$, $1\leq i \leq I$, such that, up to a subsequence,
\begin{subequations}\label{eq:compact}
\begin{alignat}{2}
s_{\Tt_m,\bdt_m} & \underset{m\to+\infty} \longrightarrow \Ss(p,x)  & \qquad & \text{a.e. in }\; Q_T, 
\label{eq:compact.s}\\
G_{\Ee_m, \bdt_m}  & \underset{m\to+\infty} \longrightarrow \grad \kirsqr_i(p)  & \qquad &
\text{weakly in }\; L^2(Q_{i,T})^d, \label{eq:compact.xi}\\
J_{\Ee_m, \bdt_m}  & \underset{m\to+\infty} \longrightarrow \grad \kirmin(p)  & \qquad &
\text{weakly in }\; L^2(Q_{T})^d . \label{eq:compact.Upsilon}
\end{alignat}
\end{subequations}
\end{prop}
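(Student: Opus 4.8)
My plan is to extract a limit pressure $p$ from the uniform a priori bounds of Corollary~\ref{cor:xi_Upsilon}, obtain the a.e. convergence of the saturations from a discrete nonlinear Aubin--Simon argument, and identify the weak $L^2$ limits of the reconstructed gradients by the usual two-point-flux machinery. First I would record the consequences of the estimates at hand: using $a_\sig = m_\sig/d_\sig$ and the identity $m_{\Delta_\sig} = d\,m_\sig d_\sig$, one sees that~\eqref{eq:dissip.xi} and~\eqref{eq:dissip.Upsilon} say precisely that $(G_{\Ee_m,\bdt_m})_m$ is bounded in $L^2(Q_{i,T})^d$ and $(J_{\Ee_m,\bdt_m})_m$ in $L^2(Q_T)^d$, while~\eqref{eq:L2.Upsilon}--\eqref{eq:L2.xi} bound the piecewise-constant reconstructions of $\kirmin(p_{\Tt_m,\bdt_m})$ in $L^2(Q_T)$ and of $\kirsqr_i(p_{\Tt_m,\bdt_m})$ in $L^2(Q_{i,T})$. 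Hence, up to a subsequence, $G_{\Ee_m,\bdt_m}\rightharpoonup g_i$ weakly in $L^2(Q_{i,T})^d$, $J_{\Ee_m,\bdt_m}\rightharpoonup j$ weakly in $L^2(Q_T)^d$, and the reconstructions of $\kirmin(p_{\Tt_m,\bdt_m})$ and $\kirsqr_i(p_{\Tt_m,\bdt_m})$ converge weakly in $L^2$; everything then reduces to identifying these weak limits, for which I need strong (equivalently, a.e.) convergence of $\kirmin(p_{\Tt_m,\bdt_m})$.

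The heart of the argument, and the step I expect to be the main obstacle, is precisely this strong compactness. The only time estimate available, obtained by combining the scheme~\eqref{eq:scheme.mass} with Lemma~\ref{lem:boundS}, controls the discrete time derivative of $\phi\,s_{\Tt_m,\bdt_m} = \phi\,\Ss(p_{\Tt_m,\bdt_m},\cdot)$ in a discrete dual norm, and $\Ss$ degenerates (is constant equal to $1$) exactly where the pressure is large, so this estimate alone cannot yield compactness of the Kirchhoff variable. I would therefore invoke a discrete nonlinear Aubin--Simon compactness result in the spirit of~\cite{ACM17}, applied to $u_m := \kirmin(p_{\Tt_m,\bdt_m})$ --- which, unlike the subdomain-wise $\kirsqr_i(p_{\Tt_m,\bdt_m})$, is continuous across the interfaces $\Gamma_{i,j}$ --- using that $u_m$ is bounded in a discrete $L^2((0,T);H^1(\O))$ norm by the previous step and that, subdomain by subdomain, $\phi\,s_{\Tt_m,\bdt_m} = b(x,u_m)$ with $b(x,\cdot) = \phi_i\,\Ss_i\circ\kirmin^{-1}$ on $\O_i$, a continuous nondecreasing map of at most linear growth thanks to~\eqref{eq:xi_Upsilon}. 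Since the discrete time derivative of $b(x,u_m)$ is bounded, the criterion applies and delivers, up to a subsequence, strong $L^2(Q_T)$ and a.e. convergence of $u_m$ towards some $\ell$. I would then set $p := \kirmin^{-1}(\ell)$, with the convention $p = -\infty$ and $\Ss(p,\cdot) = 0$ on the set where $\ell$ attains the left endpoint of the range of $\kirmin$; note that no uniform-in-$m$ lower bound on the discrete pressures is needed here, which is welcome since the one in Lemma~\ref{lem:Boundedness} depends on the mesh.

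The rest should be routine. On each $\O_i$ the maps $\Ss_i\circ\kirmin^{-1}$ and $\kirsqr_i\circ\kirmin^{-1}$ are continuous, so the a.e. convergence of $u_m$ gives $s_{\Tt_m,\bdt_m}\to\Ss(p,\cdot)$ a.e. in $Q_T$ --- which is~\eqref{eq:compact.s} --- and $\kirsqr_i(p_{\Tt_m,\bdt_m})\to\kirsqr_i(p)$ a.e. in $Q_{i,T}$, hence strongly in $L^2(Q_{i,T})$ by Vitali's lemma and the bound~\eqref{eq:L2.xi}. With these strong convergences at hand I would identify $g_i$ and $j$ by the standard argument of~\cite{eymardEtAl2000finiteVolumeMethod}: test $G_{\Ee_m,\bdt_m}$ (resp.\ $J_{\Ee_m,\bdt_m}$) against a fixed $\boldsymbol\Phi\in C^\infty_c((0,T)\times\O_i)^d$ (resp.\ $\boldsymbol\Phi\in C^\infty_c((0,T)\times(\O\cup\Gamma^{\rm N}))^d$, so as to also exploit the zero-Neumann encoding), perform the discrete integration by parts that moves the difference quotient onto $\boldsymbol\Phi$, and pass to the limit using the orthogonality property in Definition~\ref{def:meshAdmissible} and the consistency of the reconstructions; this yields $g_i = \grad\kirsqr_i(p)$ with $\kirsqr_i(p)\in L^2((0,T);H^1(\O_i))$ and $j = \grad\kirmin(p)$ with $\kirmin(p)\in L^2((0,T);H^1(\O))$, i.e.~\eqref{eq:compact.xi}--\eqref{eq:compact.Upsilon}. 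Lastly, the uniform control of the discrete space translates of $\kirmin(p_{\Tt_m,\bdt_m})-\kirmin(p^{\rm D})$ up to $\Gamma^{\rm D}$, where it is extended by $0$, passes to the limit and forces its trace to vanish on $\Gamma^{\rm D}$, so $\kirmin(p)-\kirmin(p^{\rm D})\in L^2((0,T);V)$.

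In summary, the delicate point is the second paragraph: bridging the gap between a time estimate that only sees the saturation and the compactness needed for the non-degenerate variable $\kirmin(p)$, all while respecting the heterogeneity. This is exactly what dictates running the nonlinear Aubin--Simon argument on $\kirmin(p)$, continuous across the interfaces $\Gamma_{i,j}$, with an $x$-dependent monotone nonlinearity $b(x,\cdot)$, rather than on the discontinuous transforms $\kirsqr_i(p)$.
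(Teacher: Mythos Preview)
Your overall strategy coincides with the paper's: the bounds of Corollary~\ref{cor:xi_Upsilon} give weak $L^2$ limits for $\kirsqr_i(p_{\Tt_m,\bdt_m})$, $\kirmin(p_{\Tt_m,\bdt_m})$, $G_{\Ee_m,\bdt_m}$ and $J_{\Ee_m,\bdt_m}$; the gradient reconstructions are identified with the distributional gradients of the weak limits of the functions by the classical TPFA argument (the paper cites~\cite{EG03,CHLP03}); the a.e.\ convergence of the saturation comes from the nonlinear time-compactness result of~\cite{ACM17} fed by Lemma~\ref{lem:boundS} and Corollary~\ref{cor:xi_Upsilon}; and the trace on $\Gamma^{\rm D}$ is recovered as in~\cite{BCH13}.

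There is, however, a genuine gap in your second paragraph. You assert that the criterion of~\cite{ACM17} yields strong $L^2$ and a.e.\ convergence of $u_m=\kirmin(p_{\Tt_m,\bdt_m})$ itself. It does not: in the degenerate parabolic--elliptic regime, \cite[Theorem~3.9]{ACM17} only delivers a.e.\ convergence of $b(x,u_m)=\phi\,s_{\Tt_m,\bdt_m}$, and identifies its limit as $b(x,\wh u)$ where $\wh u$ is the \emph{weak} $L^2$ limit of $u_m$. Since each $\Ss_i$ is constant on $[\ov p_i,+\infty)$, the map $b(x,\cdot)$ has a plateau, and no time compactness on $u_m$ can be extracted on the saturated set $\{s=1\}$. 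Your subsequent deduction of a.e.\ convergence of $\kirsqr_i(p_{\Tt_m,\bdt_m})$ via the composition $\kirsqr_i\circ\kirmin^{-1}$ therefore breaks down, and with it your route to the identification $\wh\kirsqr_i=\kirsqr_i(p)$.

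The paper sidesteps this by not seeking strong convergence of the Kirchhoff variables at all. It applies \cite[Theorem~3.9]{ACM17} twice: on each $Q_{i,T}$ with the pair $(\kirsqr_i(p_{\Tt_m,\bdt_m}),\,\Ss_i\circ\kirsqr_i^{-1})$, and on $Q_T$ with $(\kirmin(p_{\Tt_m,\bdt_m}),\,\Ss\circ\kirmin^{-1})$, obtaining $s_{\Tt_m,\bdt_m}\to\Ss_i\circ\kirsqr_i^{-1}(\wh\kirsqr_i)$ a.e.\ in $Q_{i,T}$ and the analogous statement for $\wh\kirmin$. One then \emph{defines} $p:=\kirsqr_i^{-1}(\wh\kirsqr_i)$ on $Q_{i,T}$, so that~\eqref{eq:compact.xi} is tautological and~\eqref{eq:compact.s} follows; consistency with $\wh\kirmin$ is enforced, up to a further subsequence, by uniqueness of the a.e.\ limit of $s_{\Tt_m,\bdt_m}$. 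The strong convergence you invoke is neither available nor needed; once you replace your claim on $u_m$ by the correct output of~\cite{ACM17}, your proof and the paper's are essentially identical.
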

\begin{proof}
We know from Corollary~\ref{cor:xi_Upsilon} that $\kirsqr_i(p_{\Tt_m, \bdt_m})$ and $\kirmin(p_{\Tt_m, \bdt_m})$ are bounded w.r.t. $m$ in $L^2(Q_{i,T})$ and $L^2(Q_T)$ respectively, while $G_{\Ee_m, \bdt_m}$ and $J_{\Ee_m, \bdt_m}$ are respectively bounded in $L^2(Q_{i,T})^d$ and $L^2(Q_T)^d$.
In particular, there exist $\wh \kirsqr_i \in L^2(Q_{i,T})$,  $\wh \kirmin \in L^2(Q_T)$, $J \in L^2(Q_{i,T})^d$, and 
$J \in L^2(Q_{T})^d$ such that 
\begin{subequations}\label{eq:conv.L2.weak}
\begin{alignat}{4}
\kirsqr_i(p_{\Tt_m, \bdt_m}) & \underset{m\to+\infty} \longrightarrow \wh \kirsqr_i & \qquad & \text{weakly in}\; L^2(Q_{i,T}), \\
\kirmin(p_{\Tt_m, \bdt_m}) & \underset{m\to+\infty} \longrightarrow \wh \kirmin & \qquad & \text{weakly in}\; L^2(Q_{T}), \\
G_{\Ee_m, \bdt_m}  & \underset{m\to+\infty} \longrightarrow G   & \qquad &
\text{weakly in }\; L^2(Q_{i,T})^d, \\
J_{\Ee_m, \bdt_m}  & \underset{m\to+\infty} \longrightarrow J  & \qquad &
\text{weakly in }\; L^2(Q_{T})^d. 
\end{alignat}
\end{subequations}
Establishing that $\wh \kirsqr_i \in L^2((0,T); H^1(\O_i))$ and $\wh \kirmin\in L^2((0,T);H^1(\O))$ with $G = \grad \wh \kirsqr_i$ and $J = \grad \wh \kirmin$ is now classical, see for instance \cite[Lemma 2]{EG03} or \cite[Lemma 4.4]{CHLP03}.

The key points of this proof are the identification $\wh \kirsqr_i = \kirsqr_i(p)$ and $\wh \kirmin=\kirmin(p)$ for some measurable $p$, as well as the proofs of the almost everywhere convergence property~\eqref{eq:compact.s}. The identification of the limit and the almost everywhere convergence can be handled  simultaneously 
by using twice~\cite[Theorem 3.9]{ACM17}, once for $\kirsqr_i(p)$ and once for $\kirmin(p)$. 
More precisely, Lemma~\ref{lem:boundS} provides a control 
on the time variations of the approximate saturation $s_{\Tt_m, \bdt_m}$, whereas Corollary~\ref{cor:xi_Upsilon} provides some 
compactness w.r.t. space on $\kirsqr_i(p_{\Tt_m,\bdt_m})$ and 
$\kirmin(p_{\Tt_m,\bdt_m})$. Using further that 
$s_{\Tt_m, \bdt_m} = \Ss_i \circ \kirsqr_i^{-1}\left(\kirsqr_i(p_{\Tt_m,\bdt_m})\right)$ with 
$\Ss_i \circ \kirsqr_i^{-1}$ nondecreasing and continuous, 
then one infers from \cite[Theorem 3.9]{ACM17} that 
\[
s_{\Tt_m, \bdt_m} \underset{m\to+\infty} \longrightarrow 
\Ss_i\circ \kirsqr_i^{-1}(\wh \kirsqr_i) \quad \text{a.e. in}\;
Q_{i,T}.
\]
Let $p = \kirsqr_i^{-1}(\wh \kirsqr_i)$. Then, \eqref{eq:compact.s} and \eqref{eq:compact.xi} hold. Proving~\eqref{eq:compact.s} and \eqref{eq:compact.Upsilon} is similar, and 
the properties \eqref{eq:compact} can be assumed to hold for the same function $p$ up to the extraction of yet another subsequence.

Finally, by applying the arguments developed in \cite[\S4.2]{BCH13}, we show that $\kirmin(p)$ and $\kirmin(p^{\rm D})$ share the same trace on $(0,T) \times \Gamma^{\rm D}$, hence $\kirmin(p) - \kirmin(p^{\rm D}) \in L^2((0,T);V)$.
\end{proof}

Let us now define 
\be
\eta_{\Ee_m, \bdt_m}(t,x) = 
\eta_\sig^n \quad \text{if }\; (t,x) \in (t_m^{n-1}, t_m^n]\times \Delta_\sig 
\ee
for $\sig \in \Ee_m$, $1 \leq n \leq N_m$.
\begin{lem}\label{lem:eta_E}
Up to a subsequence, the function $p$ whose existence is guaranteed by Proposition~\ref{prop:compact} satisfies
\be
\eta_{\Ee_m, \bdt_m} \underset{m\to\infty}\longrightarrow \eta(\Ss(p,x)) \qquad \text{in }\; L^q(Q_T), \; 1 \leq q < +\infty.
\ee
\end{lem}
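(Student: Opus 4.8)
The plan is to deduce the stated $L^q$-convergence of $\eta_{\Ee_m,\bdt_m}$ from the almost-everywhere convergence of the approximate saturations $s_{\Tt_m,\bdt_m}$ established in Proposition~\ref{prop:compact}, together with the dominated convergence theorem. The key observation is that the edge mobility $\eta_\sig^n$ given by the upwinding rule \eqref{eq:scheme.upwind} always lies between the two cell values $\eta_K(s_K^n)$ and $\eta_{K\sig}(s_{K\sig}^n)$; in particular $0 \leq \eta_\sig^n \leq 1/\mu$, so the sequence $\eta_{\Ee_m,\bdt_m}$ is uniformly bounded in $L^\infty(Q_T)$, and it suffices to prove convergence almost everywhere in $Q_T$, after which the bounded domain $Q_T$ and dominated convergence give convergence in every $L^q(Q_T)$, $1\le q<+\infty$.

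First I would set up the comparison. Fix a point $(t,x)$ lying in some diamond cell $\Delta_\sig$ at time level $n$, with $\sig \in \Ee_K$. Writing $\wt\eta_{\Tt_m,\bdt_m}(t,x) = \eta_K(s_K^n)$ for the piecewise-constant function built from the cell-centered values (i.e.\ $\wt\eta_{\Tt_m,\bdt_m}(t,x) = \eta(s_{\Tt_m,\bdt_m}(t,x), x)$ on $(t_m^{n-1},t_m^n]\times K$), one has on $\Delta_{K\sig}$ the pointwise bound
\[
\big| \eta_{\Ee_m,\bdt_m}(t,x) - \wt\eta_{\Tt_m,\bdt_m}(t,x) \big| \leq \big| \eta_K(s_K^n) - \eta_{K\sig}(s_{K\sig}^n) \big|,
\]
since $\eta_\sig^n$ is a convex combination of $\eta_K(s_K^n)$ and $\eta_{K\sig}(s_{K\sig}^n)$. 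By the continuity of each $\eta_i$ and the a.e.\ convergence $s_{\Tt_m,\bdt_m}\to\Ss(p,x)$, the function $\wt\eta_{\Tt_m,\bdt_m}$ converges a.e.\ in $Q_T$ to $\eta(\Ss(p,x),x)$ (being careful that on each subdomain $\O_i$ the relevant $\eta_i$ is continuous, and cells never straddle an interface by the compatibility property \eqref{eq:indexK}). It then remains to show that the discrepancy term on the right tends to zero in, say, $L^2(Q_T)$, which combined with a.e.\ convergence of $\wt\eta_{\Tt_m,\bdt_m}$ and extraction of a further subsequence yields a.e.\ convergence of $\eta_{\Ee_m,\bdt_m}$ to $\eta(\Ss(p,x),x) = \eta(\Ss(p,x))$, abusing the $x$-dependence notation as in \eqref{eq:finK}.

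The main obstacle, and the only genuinely quantitative step, is controlling $\sum_{n}\dtn \sum_\sig m_{\Delta_\sig}\big(\eta_K(s_K^n)-\eta_{K\sig}(s_{K\sig}^n)\big)^2$ and showing it vanishes as $m\to\infty$. For interior edges $\sig\in\Ee_{i,m}$ away from the interfaces, $\eta_K(s_K^n)-\eta_{K\sig}(s_{K\sig}^n) = \eta_i\circ\Ss_i(p_K^n) - \eta_i\circ\Ss_i(p_{K\sig}^n)$, and since $\eta_i\circ\Ss_i$ differs from $|\kirsqr_i'|^2$ only by the constant $\lambda_i$, a Lipschitz-type estimate together with the uniform $H^1$-type bound \eqref{eq:dissip.xi} from Corollary~\ref{cor:xi_Upsilon} controls $\sum_n\dtn\sum_{\sig\in\Ee_i} a_\sig (\eta_i\circ\Ss_i(p_K^n)-\eta_i\circ\Ss_i(p_{K\sig}^n))^2$ (using that $\eta_i\circ\Ss_i$ is bounded, hence $\widecheck\eta_i\circ\Ss_i$-type weights can be absorbed); multiplying by $m_{\Delta_\sig} = d\,m_\sig d_\sig$ versus $a_\sig = m_\sig/d_\sig$ produces the extra factor $d_\sig^2 \le d\, \size_{\Tt_m}^2 \to 0$, which kills the interior contribution. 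For the interface edges $\sig\in\Ee_{\Gamma,m}$, one uses instead estimate \eqref{eq:dissip.Upsilon} on the increments of $\kirmin$, or directly the flux bound \eqref{eq:dissip.max} on $a_\sig\ov\eta_\sig^n(p_K^n-p_{K\sig}^n)^2$, noting that the total $(d-1)$-measure of $\Gamma$ is finite and the diamond cells along $\Gamma$ have measure $O(\size_{\Tt_m})$, so this contribution also vanishes; alternatively, since $\Ee_{\Gamma,m}$ gathers only $O(\size_{\Tt_m}^{-(d-1)})$ edges each contributing a diamond of measure $O(\size_{\Tt_m}^d)$, the boundedness $|\eta_K(s_K^n)-\eta_{K\sig}(s_{K\sig}^n)|\le 1/\mu$ alone already shows the interface part of the integral is $O(\size_{\Tt_m})$. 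Patching the interior and interface estimates gives $\eta_{\Ee_m,\bdt_m}-\wt\eta_{\Tt_m,\bdt_m}\to 0$ in $L^2(Q_T)$, and one concludes as described, noting that the limit $\eta(\Ss(p,x),x)$ is independent of the chosen subsequence so the whole (already extracted) sequence converges.
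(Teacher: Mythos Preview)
Your proof follows the same architecture as the paper's: split $\eta_{\Ee_m,\bdt_m}-\eta(\Ss(p,x),x)$ into (i) the cell-centered part $\wt\eta_{\Tt_m,\bdt_m}=\eta(s_{\Tt_m,\bdt_m},x)$, handled by the a.e.\ convergence~\eqref{eq:compact.s} and dominated convergence, and (ii) the edge discrepancy $|\eta_K(s_K^n)-\eta_{K\sig}(s_{K\sig}^n)|$ on diamonds, further split between interior edges $\Ee_{i,m}$ and interface edges $\Ee_{\Gamma,m}$. Your treatment of the interface part (crude bound $|\cdot|\le 1/\mu$ together with $\sum_{\sig\in\Ee_{\Gamma,m}} m_{\Delta_\sig}\lesssim \measure^{d-1}(\Gamma)\,\size_{\Tt_m}$) is exactly the paper's.

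There is, however, a gap in your handling of the interior edges. You invoke a ``Lipschitz-type estimate'' to pass from the bound~\eqref{eq:dissip.xi} on $\sum a_\sig\big(\kirsqr_i(p_K^n)-\kirsqr_i(p_{K\sig}^n)\big)^2$ to a bound on $\sum a_\sig\big(\eta_i\!\circ\!\Ss_i(p_K^n)-\eta_i\!\circ\!\Ss_i(p_{K\sig}^n)\big)^2$, and then cash in the geometric factor $m_{\Delta_\sig}/a_\sig\lesssim \size_{\Tt_m}^2$. Under the standing hypotheses~\eqref{eq:EtaSs}--\eqref{eq:hyp.xi} of the paper, the composition $\eta_i\!\circ\!\Ss_i\!\circ\!\kirsqr_i^{-1}$ is only known to be continuous, monotone and bounded --- hence \emph{uniformly} continuous --- but nothing guarantees it is Lipschitz (no regularity beyond monotonicity is assumed on~$\eta_i$). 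Without Lipschitz the $a_\sig$-weighted sum of $(\Delta\eta)^2$ need not be bounded uniformly in $m$, and your ``extra factor $d_\sig^2$'' then has nothing to act on.

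The paper closes this gap as follows. The geometric relation $m_{\Delta_\sig}\le C a_\sig \size_{\Tt_m}^2$ combined with~\eqref{eq:dissip.xi} \emph{does} show that the auxiliary function $q_{\Ee_{i,m},\bdt_m}(t,x)=|\kirsqr_i(p_K^n)-\kirsqr_i(p_{K\sig}^n)|$ tends to $0$ in $L^2(Q_{i,T})$, hence a.e.\ up to a further subsequence. One then applies the modulus of continuity $\varpi_i$ of $\eta_i\!\circ\!\Ss_i\!\circ\!\kirsqr_i^{-1}$ \emph{pointwise} to get $r_\sig^n\le\varpi_i(q_\sig^n)\to 0$ a.e., and concludes by dominated convergence (the $r_\sig^n$ are bounded by $1/\mu$). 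Replacing your unjustified Lipschitz step by this uniform-continuity route makes your argument correct and identical to the paper's.
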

\begin{proof}
Because of~\eqref{eq:compact.s}, $\eta_{\Tt_m, \bdt_m} = \eta(s_{\Tt_m, \bdt_m},x)$ converges almost everywhere to $\eta(\Ss(p,x),x)$. Since $\eta$ is bounded, Lebesgue's dominated convergence theorem ensures that the convergence holds in $L^q(Q_T)$ for all $q\in [1,+\infty)$. The reconstruction $\eta_{\Ee_m, \bdt_m}$ of the mobility is also uniformly bounded, so we have just to show that $\lVert \eta_{\Tt_m, \bdt_m} -\eta_{\Ee_m, \bdt_m} \rVert_{L^1(Q_T)} \rightarrow 0$ as $m \rightarrow +\infty$. Letting $\Delta_{K\sigma}=K \cap \Delta_\sig$ denote the half-diamond cell, we have
\begin{multline*}
\lVert \eta_{\Tt_m, \bdt_m} -\eta_{\Ee_m, \bdt_m} \rVert_{L^1(Q_T)} \leq \;
\sum_{n=1}^{N_m} \dt^n_m \sum_{K\in\Tt_m} \sum_{\sigma\in\Ee_K} m_{\Delta_{K\sigma}} |\eta_K(s_K^n) - \eta_\sigma^n|\\
 \leq \;\sum_{n=1}^{N_m} \dt^n_m \sum_{\sigma\in\Ee_m} m_{\Delta_{\sigma}} |\eta_K(s_K^n) - \eta_{K\sigma}(s_{K\sig}^n)|
 \, \leq \;\sum_{i=1}^I\Rr_{i,m} + \Rr_{\Gamma,m},  
 \end{multline*}
where 
\begin{align*}
\Rr_{i,m}  = \;& \sum_{n=1}^{N_m} \dt^n_m \sum_{\sig \in \Ee_{i,m}} m_{\Delta_{\sigma}} |\eta_K(s_K^n) - \eta_{K\sigma}(s_{K\sig}^n)|, 
\\
\Rr_{\Gamma,m}  = \;&   \sum_{n=1}^{N_m} \dt^n_m \sum_{\sig \in \Ee_{\Gamma,m}} m_{\Delta_{\sigma}} |\eta_K(s_K^n) - \eta_{K\sigma}(s_{K\sig}^n)|.
\end{align*}
Let us define
\[
r_{\Ee_m, \bdt_m}(t,x)= |\eta_K^n -\eta_{K\sigma}^n|= r_\sigma^n \quad \text{if}\; (t,x) \in (t_m^{n-1}, t_m^n]\times \Delta_{\sig},
\]
then $r_{\Ee_m, \bdt_m}$ is uniformly bounded by $\lVert \eta \rVert_\infty = 1/\mu$. Therefore, 
\[
\Rr_{\Gamma,m} \leq \frac T\mu \sum_{\sig \in \Ee_{\Gamma,m}} m_{\Delta_{\sigma}} \leq \frac {2T \,\measure^{d-1}(\Gamma)}{\mu d} \,\size_{\Tt_m}
\]
where $\size_{\Tt_m}$ is the size of $\Tt_m$ as defined in \eqref{eq:diamreg}. 
Besides, for $i \in \{1,\dots, I\}$, $\eta_i \circ \Ss_i \circ \kirsqr_i^{-1}$ is continuous, monotone and bounded, hence uniformly continuous. This provides the existence of a modulus of continuity 
$\varpi_i:\R_+ \to \R_+$ with $\varpi_i(0) = 0$ such that 
\be\label{eq:rnsig}
r_\sigma^n := |\eta \circ \Ss \circ \kirsqr_i^{-1}(\kirsqr_K^n) - \eta \circ \Ss \circ \kirsqr_i^{-1}(\kirsqr_{K\sigma}^n)| 
\leq \varpi_i(|\kirsqr_{K}^n - \kirsqr_{K\sig}^n|)
\ee
for $\sig \in \Ee_{i,m}$. Therefore, if the function
\begin{subequations}
\be\label{eq:qeimdtm}
q_{\Ee_{i,m}, \bdt_m}(t,x) = \begin{cases}
\, |\kirsqr_i(p_K^n) - \kirsqr_i(p_{K\sig}^n)| & \text{if}\; (t,x) \in (t_m^{n-1},t_m^n] \times \Delta_\sig, 
\\
\, 0 & \text{otherwise},
\end{cases}
\ee
for $\sig \in \Ee_{i,m}$, $1 \, \leq \, n \leq N_m$,
\end{subequations}
could be proven to converge to $0$ almost everywhere in $Q_{i,T}$, then it would also be the case for $r_{\Ee_m,\bdt_m}$ and $\Rr_{i,m}$ as $m\rightarrow+\infty$, thanks to Lebesgue's dominated convergence theorem. Now, it follows from \eqref{eq:dissip.xi} and from the elementary geometric relation 
\[
m_{\Delta_\sig} = \frac{a_\sig}{d} d_\sig^2 \, \leq \,  4 \frac{a_\sig}{d} \size_{\Tt_m}^2 ,
\]
that 
\[
\lVert q_{\Ee_{i,m}, \bdt_m} \rVert^2_{L^2(Q_{i,T})} 
= \sum_{n=1}^{N_m} \dt^n_m \sum_{\sig \in \Ee_{i,m}} m_{\Delta_\sig} |\kirsqr_i(p_K^n) - \kirsqr_i(p_{K\sig}^n)|^2 
\leq \frac{4 \cter{c:dissip.max}}d \size_{\Tt_m}^2. 
\]
Therefore, $q_{\Ee_{i,m}, \bdt_m}\rightarrow 0$ in $L^2(Q_{i,T})$, thus also almost everywhere up to extraction of a subsequence. This provides the desired result.
\end{proof}

\subsection{Identification of the limit}\label{ssec:identify}

So far, we have exhibited some ``limit'' value $p$ for the approximate solution $p_{\Tt_m, \bdt_m}$ in Proposition~\ref{prop:compact}. Next, we show that the scheme is consistent with the continuous problem by showing that any limit value is a weak solution.

\begin{prop}\label{prop:identify}
The function $p$ whose existence is guaranteed by Proposition~\ref{prop:compact} is a weak solution of the problem~\eqref{eq:cont.mass}--\eqref{eq:Ss} in the sense of Definition~\ref{def:weak}.
\end{prop}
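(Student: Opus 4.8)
The plan is to pass to the limit $m\to\infty$ in the discrete scheme, tested against a smooth function, and to recover the weak formulation \eqref{eq:weak.mass}--\eqref{eq:weak.flux}. Since the regularity requirements $\kirsqr_i(p)\in L^2((0,T);H^1(\O_i))$ and $\kirmin(p)-\kirmin(p^{\rm D})\in L^2((0,T);V)$ have already been established in Proposition~\ref{prop:compact}, only the weak equation itself must be identified. The starting point is to fix $\fntest\in C^\infty_c([0,T)\times(\O\cup\Gamma^{\rm N}))$, multiply the discrete volume balance \eqref{eq:scheme.mass} by $\dtn\,\fntest_K^{n-1}$ (using the value at $t^{n-1}$, so that the initial datum appears naturally after the discrete integration by parts in time), and sum over $K\in\Tt_m$ and $n\in\{1,\dots,N_m\}$. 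This produces three terms: a discrete time-derivative term $T^1_m$, an initial-data term $T^2_m$, and a flux term $T^3_m$ obtained after a discrete integration by parts in space, of the form
\be
T^3_m = \sum_{n=1}^{N_m}\dtn\sum_{\sig\in\Ee_m} a_\sig\lambda_\sig\eta_\sig^n(\phead_K^n-\phead_{K\sig}^n)(\fntest_K^{n-1}-\fntest_{K\sig}^{n-1}).
\ee

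First I would handle the two easy terms. For $T^1_m$, a discrete Abel summation in time moves the difference onto $\fntest$; using $s_K^n=\Ss_K(p_K^n)$, the almost-everywhere convergence \eqref{eq:compact.s} of $s_{\Tt_m,\bdt_m}$ together with the boundedness of $\Ss$ and Lebesgue's theorem gives $T^1_m\to -\iint_{Q_T}\phi\,\Ss(p,x)\,\p_t\fntest\,\d x\,\d t$. The initial-data term $T^2_m$ converges to $-\int_\O\phi\,s^0\,\fntest(\cdot,0)\,\d x$ by \eqref{eq:scheme.init} and standard consistency of the piecewise-constant projection. For $T^3_m$, I would rewrite $\phead=p+\psi$ and split the gradient reconstruction into a Kirchhoff part and a gravity part. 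Writing $\lambda_\sig\eta_\sig^n(p_K^n-p_{K\sig}^n)$ in terms of the discrete gradient of $\kircho_i$ is the delicate point and is discussed below; the gravity contribution $\lambda_\sig\eta_\sig^n(\psi_K-\psi_{K\sig})$ is straightforward since $\psi$ is smooth and $\eta_{\Ee_m,\bdt_m}\to\eta(\Ss(p,x))$ strongly in every $L^q(Q_T)$ by Lemma~\ref{lem:eta_E}. Reconstructing $\fntest_K^{n-1}-\fntest_{K\sig}^{n-1}$ as (minus) $d_\sig$ times the average of $\grad\fntest$ on the diamond (up to an $O(\size_{\Tt_m})$ consistency error controlled by $\|\fntest\|_{C^2}$ and the regularity $\regul$), one reorganizes $T^3_m$ as an integral over $Q_T$ of $\bigl(\text{reconstructed flux}\bigr)\cdot\grad\fntest$ plus a vanishing remainder.

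The main obstacle is the treatment of the upwinded Kirchhoff flux across \emph{interface} edges $\sig\in\Ee_{\Gamma,m}$. Away from $\Gamma$, i.e. on $\Ee_{i,m}$, the edge mobility satisfies $\widecheck\eta_\sig=\eta_i\circ\Ss_i$ and one has the pointwise identity $\lambda_i(\eta_\sig^n\wedge\ov\eta_\sig^n)$-type control, so that $\lambda_\sig\eta_\sig^n(p_K^n-p_{K\sig}^n)$ is, up to a controllable error, the discrete gradient of $\kircho_i$; here the chain $\kircho_i\circ\kirsqr_i^{-1}$ being Lipschitz, combined with the weak convergence $G_{\Ee_m,\bdt_m}\rightharpoonup\grad\kirsqr_i(p)$ from \eqref{eq:compact.xi} and the strong $L^q$ convergence of the mobility from Lemma~\ref{lem:eta_E}, lets one pass to the limit in the product (weak times strong) and obtain $-\iint_{Q_{i,T}}\grad\kircho_i(p)\cdot\grad\fntest$. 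The interface edges are dealt with by showing their total contribution vanishes: by Cauchy--Schwarz, the interface sum is bounded by $\bigl(\sum_{n}\dtn\sum_{\sig\in\Ee_{\Gamma,m}}a_\sig\lambda_\sig\eta_\sig^n(\phead_K^n-\phead_{K\sig}^n)^2\bigr)^{1/2}$ times $\bigl(\sum_n\dtn\sum_{\sig\in\Ee_{\Gamma,m}}a_\sig\lambda_\sig\eta_\sig^n(\fntest_K^{n-1}-\fntest_{K\sig}^{n-1})^2\bigr)^{1/2}$; the first factor is bounded uniformly by $\sqrt{\cter{c:dissip.2}}$ via \eqref{eq:Flux.L2}, while the second is $O(\size_{\Tt_m})$ because $\sum_{\sig\in\Ee_{\Gamma,m}}m_{\Delta_\sig}\le \tfrac{2}{d}\measure^{d-1}(\Gamma)\,\size_{\Tt_m}$ and $|\fntest_K^{n-1}-\fntest_{K\sig}^{n-1}|\le C\,d_\sig\|\grad\fntest\|_\infty$, exactly as in the proof of Lemma~\ref{lem:eta_E}. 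Hence the interface terms disappear in the limit, and no matching conditions need be imposed by hand — they are encoded in the limit regularity $\kirmin(p)-\kirmin(p^{\rm D})\in L^2((0,T);V)$ already secured in Proposition~\ref{prop:compact}. Collecting the limits of $T^1_m$, $T^2_m$, $T^3_m$ yields \eqref{eq:weak.mass} with $F$ given by \eqref{eq:weak.flux}, which proves that $p$ is a weak solution.
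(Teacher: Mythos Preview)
Your proposal is correct and follows the same route as the paper: test the scheme against $\fntest_K^{n-1}$, handle the accumulation and initial-data terms by \eqref{eq:compact.s} and dominated convergence, treat the gravity part via Lemma~\ref{lem:eta_E}, and kill the contribution of the interface edges $\Ee_{\Gamma,m}$ by Cauchy--Schwarz combined with $\sum_{\sigma\in\Ee_{\Gamma,m}}m_{\Delta_\sigma}=O(\size_{\Tt_m})$. The only step you leave vague is the ``controllable error'' on the interior edges $\Ee_{i,m}$, i.e.\ how $\lambda_i\eta_\sigma^n(p_K^n-p_{K\sigma}^n)$ is compared to the discrete increment of $\kircho_i$; the paper makes this precise by introducing the auxiliary quantity
\[
\wt\Bb_{i,m}^1=\sum_{n}\dt_m^n\sum_{\sigma\in\Ee_{i,m}}a_\sigma\sqrt{\lambda_i\eta_\sigma^n}\,\bigl(\kirsqr_i(p_K^n)-\kirsqr_i(p_{K\sigma}^n)\bigr)\bigl(\fntest_K^{n-1}-\fntest_{K\sigma}^{n-1}\bigr),
\]
passing to the limit in it via the weak--strong product $G_{\Ee_m,\bdt_m}\rightharpoonup\grad\kirsqr_i(p)$ and $\sqrt{\eta_{\Ee_m,\bdt_m}}\to\sqrt{\eta_i(\Ss_i(p))}$, and then showing $|\Bb_m^1-\wt\Bb_m^1|\to 0$ by comparing $\eta_\sigma^n$ with the mean value $\wt\eta_\sigma^n=\bigl[(\kirsqr_i(p_K^n)-\kirsqr_i(p_{K\sigma}^n))/(\sqrt{\lambda_i}(p_K^n-p_{K\sigma}^n))\bigr]^2$, which converges to the same limit $\eta_i(\Ss_i(p))$ by the argument of Lemma~\ref{lem:eta_E}.
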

\begin{proof}
Let $\fntest \in C_{c}^{\infty}(\{\Omega\cup \Gamma^{\rm N}\} \times [0, T))$ and denote by $\fntest_{K}^{n}=\fntest(t^n_m,x_K)$, for all $K \in \Tt_m$ and all $n\in\{0,\dots,N_m\}$. We multiply \eqref{eq:scheme.mass}
 by $\dt^n_m \fntest_K^{n-1}$ and sum over $n\in\{1,\dots,N_m\}$ and $K\in\Tt_m$ to obtain 
\be\label{eq:A+B_m}
\Aa_m + \Bb_m=0, \qquad m \geq 1, 
\ee
where we have set
\begin{subequations} 
\begin{align}
\Aa_m =& \sum_{n=1}^{N_m} \sum_{K\in\Tt_m} m_K \phi_K (s_K^n - s_K^{n-1}) \fntest_K^{n-1}, \label{eq:ident.Am}\\
\Bb_m = & \sum_{n=1}^{N_m} \dt^n_m \sum_{\sig \in \Ee_m} a_\sig \lambda_\sig 
\eta_\sig^n (\phead_K^n - \phead_{K\sig}^n ) (\fntest_K^{n-1} - \fntest_{K\sig}^{n-1} ). \label{eq:ident.Bm}
\end{align}
\end{subequations}
The quantity $\Aa_m$ in \eqref{eq:ident.Am} can be rewritten as
\begin{align*}
\Aa_m&=-\sum_{n=1}^{N_m} \dt^n_m \sum_{K\in\Tt_m}  m_K  \phi_K s_K^n \frac{\fntest_K^n - \fntest_K^{n-1}}{\dt_m^n}- \sum_{K\in\Tt_m} m_K \phi_K s_K^0 \fntest_K^0  \\
&=- \iint_{Q_T}\phi\, s_{\Tt_m,\bdt_m} \delta \fntest_{\Tt_m,\bdt_m} \, \d x \, \d t-\int_\Omega \phi\, s_{\Tt_m}^0  \fntest_{\Tt_m}^0 \, \d x
\end{align*}
where
\begin{alignat*}{2}
\delta \fntest_{\Tt_m,\bdt_m}(t,x) & = \frac{\fntest_K^n - \fntest_K^{n-1}}{\dt_m^n},&\qquad\text{if }\; & (t,x) \in (t_m^{n-1},t_m^{n}) \times K,\\
\fntest_{\Tt_m}^0 &= \fntest(0,x_K) & \qquad \text{if } \; & x \in K.
\end{alignat*}
Thanks to the regularity of $\fntest$, the function $\delta \fntest_{\Tt_m,\dt_m}$ converges uniformly to $\partial_t \fntest$ on $\Omega \times [0,T]$. Moreover, by virtue of \eqref{eq:compact.s} and the boundedness of $s_{\Tt_m,\bdt_n}$ we can state that
\[
\iint_{Q_T} \phi \, s_{\Tt_m,\dt_m} \delta \fntest_{\Tt_m,\dt_m} \, \d x \, \d t \underset{m\to+\infty}\longrightarrow \iint_{Q_T} \phi \,\Ss(p,x) \partial_t \fntest \, \d x \, \d t,
\]
and, in view of the definition~\eqref{eq:scheme.init} of $s^0_{\Tt_m}$ and of the uniform convergence of $\fntest_{\Tt_m}^0$ towards 
$\fntest(0,\cdot)$, 
\[\int_\Omega \phi_{\Tt_m}  s_{\Tt_m}^0  \fntest^0_{\Tt_m} \, \d x \underset{m\to+\infty}\longrightarrow \int_\Omega \phi \, s^0 \fntest(0,\cdot) \, \d x.\]
From the above, we draw that
\be\label{eq:A_m}
\lim_{m\to+\infty} \Aa_m  = -\iint_{Q_T} \phi \, \Ss(p,x) \partial_t \fntest \, \d x \, \d t - \int_\Omega \phi \, s^0 \fntest(0,\cdot) \, \d x.
\ee

Let us now turn our attention to the quantity $\Bb_m$ of \eqref{eq:ident.Bm}, which can be split into $\Bb_m= \Bb_m^1 + \Bb_m^2$ using 
\begin{align*}
\Bb_m^1=& \sum_{n=1}^{N_m} \dt_m^n \sum_{\sigma \in \Ee_m} 
a_\sigma \lambda_\sigma \eta_\sigma^n (p_K^n-p_{K\sig}^n)(\fntest_K^{n-1}-\fntest_{K\sig}^{n-1}), \\
\Bb_m^2=&\sum_{n=1}^{N_m} \dt_m^n \sum_{\sigma \in \Ee_m}  a_\sigma \lambda_\sigma \eta_\sigma^n 
(\psi_K - \psi_{K\sig})(\fntest_K^{n-1}-\fntest_{K\sig}^{n-1}).
\end{align*}
Consider first the convective term $\Bb_m^2$. It follows from the definition of the discrete gravitational potential 
\[
\psi_K = -\varrho g\cdot x_K, \quad \psi_\sig =  -\varrho g\cdot x_\sig, \qquad K\in\Tt_m, \; \sig \in \Ee_{{\rm ext},m}^{\rm D}
\]
and from the orthogonality of the mesh that 
\[
\psi_K - \psi_{K\sig} =  d_\sig \varrho g\cdot n_{K\sig}, \qquad \forall \sig \in \Ee_K \setminus \Ee_{\rm ext}^{\rm N}, \; K \in \Tt_m.
\]
Therefore, $\Bb^2_m$ can be transformed into 
\begin{align}
\Bb_m^2 = & \sum_{n=1}^{N_m} \dt_m^n \sum_{\sigma \in \Ee_m} m_{\Delta_\sig} \lambda_\sig \eta_\sig^n 
d  \frac{\fntest_K^{n-1} - \fntest_{K\sig}^{n-1}}{d_\sig} n_{K\sig} \cdot \varrho g \nonumber\\
=&  - \iint_{Q_T}\lambda_{\Ee_m}\eta_{\Ee_m, \bdt_m} H_{\Ee_m, \bdt_m} \cdot \varrho g\, \, \d x \, \d t, \label{eq:Bm2.1}
\end{align}
where 
\begin{alignat*}{2}
\lambda_{\Ee_m}(x) &= \lambda_\sig & \qquad \text{if }\; & x \in \Delta_\sig, \; \sig \in \Ee_m,\\
H_{\Ee_m, \bdt_m}(t,x) &= (d/d_\sig)  (\fntest_{K\sig}^{n-1} - \fntest_{K}^{n-1}) n_{K\sig}  & \qquad \text{if } \; & (t,x) \in [t^{n-1}_m, t^n_m) \times \Delta_\sig. 
\end{alignat*}
After~\cite[Lemma 4.4]{CHLP03}, $H_{\Ee_m, \bdt_m}$ converges weakly in $L^2(Q_T)^d$ towards $\grad \fntest$, while $\lambda_{\Ee_m}$ and $\eta_{\Ee_m, \bdt_m}$ converge strongly in $L^4(\O)$ and $L^4(Q_T)$ towards $\lambda$ and $\eta(\Ss(p,x))$ respectively (cf. Lemma~\ref{lem:eta_E}). Thus, we can pass to the limit in~\eqref{eq:Bm2.1} and 
\be\label{eq:Bm2.2}
\lim_{m\to+\infty} \Bb_m^2 = - \iint_{Q_T}\lambda \eta(\Ss(p,x)) \varrho g \cdot \grad \fntest\, \d x \, \d t.
\ee

The capillary diffusion term $\Bb_m^1$ appears to be the most difficult one to deal with. Taking inspiration from~\cite{cancesGuichard2016EntropyScheme}, we introduce the auxiliary quantity
\begin{align*}
\wt{\Bb}{}_m^1 &=  \sum_{i=1}^I \wt \Bb_{i,m}^1 \\
& = \sum_{i=1}^I  \sum_{n=1}^{N_m} \dt_m^n \!\! \sum_{\sig \in \Ee_{i,m}} 
\!\! a_\sig \sqrt{\lambda_i \eta_\sig^n} (\kirsqr_i(p_K^n)-\kirsqr_i(p_{K\sig}^n))(\fntest_K^{n-1}-\fntest_{K\sig}^{n-1}).
\end{align*}
Analogously to~\cite{DE_FVCA8}, {we can define a piecewise-constant vector field $\ov{H}_{\Ee_m, \bdt_m}$ such that 
\[
\ov{H}_{\Ee_m, \bdt_m}(t,x) \cdot n_{K\sig} = \fntest_{K\sig}^{n-1} - \fntest_K^{n-1}, 
\qquad \text{if } \; (t,x) \in [t^{n-1}_m, t^n_m) \times \Delta_\sig, \; \sig \in \Ee_m,
\]
and such} that $\ov{H}_{\Ee_m, \bdt_m}$ converges uniformly towards $\grad \fntest$ on $\ov Q_T$. Under these circumstances, $\wt{\Bb}_{i,m}^1$ reads
\[
\wt \Bb_{i,m}^1 =\int_0^T \!\! \int_{\O_{i,m}}  \sqrt{\lambda_i \eta_{\Ee_m, \bdt_m}} \, 
G_{\Ee_m, \bdt_m}\cdot \ov H_{\Ee_m, \bdt_m}  \, \d x \, \d t
\]
where $\O_{i,m} = \bigcup_{\sig \in \Ee_{i,m}} \Delta_\sig \subset \O_i$. The strong convergence of $\sqrt{\eta_{\Ee_m, \bdt_m}}$ in $L^2(Q_{i,T})$ towards $\sqrt{\eta_i(\Ss_i(p))}$ directly follows from the boundedness of $\eta_i$ combined with~\eqref{eq:compact.s}. Combining this with~\eqref{eq:compact.xi} results in 
\be\label{eq:wtBim1}
\wt{\Bb}_{i,m}^1 \underset{m\to + \infty} \longrightarrow \iint_{Q_{i,T}} \!\!\!\!\!\!  \sqrt{\lambda_i \eta_i(\Ss_i(p))} \grad \kirsqr_i(p) \cdot \grad \fntest \, \d x \, \d t
= \iint_{Q_{i,T}} \!\!\!\!\!\!  \grad \kircho_i(p) \cdot \grad \fntest \, \d x \,\d t.
\ee
Therefore, to finish the proof of Proposition~\ref{prop:identify}, it only remains to check that $B^1_m$ and $\wt B^1_m$ share the same limit. To this end, we observe that, by the triangle inequality, we have 
\be\label{eq:diff.B}
| \Bb_m^1 - \wt \Bb_{m}^1 |\leq  \Rr_{\Gamma,m} +  \sum_{i = 1}^I \Rr_{i,m}, 
\ee
where
\begin{align*}
\Rr_{\Gamma,m} & =  \sum_{n=1}^{N_m}\dt_m^n  \!\!\! \sum_{\sigma \in \Ee_{\Gamma,m}} \!\!\!
a_\sigma \lambda_\sigma \eta_\sigma^n |p_K^n-p_{K\sig}^n| |\fntest_K^{n-1}-\fntest_{K\sig}^{n-1}|,
\\
\Rr_{i,m} & = \sum_{n=1}^{N_m} \dt_m^n \!\!\! \sum_{\sigma \in \Ee_{i,m}} \!\!\!
a_\sig \sqrt{\lambda_i \eta_\sig^n} | \kirsqr_i(p_K^n) - \kirsqr_i(p_{K\sig}^n) - \sqrt{\lambda_i \eta_\sig^n}(p_K^n - p_{K\sig}^n) |
|\fntest_K^{n-1}-\fntest_{K\sig}^{n-1}|.
\end{align*}
Applying the Cauchy-Schwarz inequality and using Proposition~\ref{prop:dissip}, we find
\[
|\Rr_{\Gamma,m}|^2 \leq \cter{c:main}   \sum_{n=1}^{N_m} \dt_m^n \!\!\sum_{\sig \in \Ee_{\Gamma, m}} \!\! a_\sig \lambda_\sig \eta_\sig^n 
|\fntest_{K}^{n-1} - \fntest_{K\sig}^{n-1}|^2 \leq 2 \cter{c:main} T \lVert \grad \fntest \rVert_\infty^2 \frac{\max_i \lambda_i}{\mu}\measure^{d-1}(\Gamma)\size_{\Tt_m}, 
\]
so $\Rr_{\Gamma,m}\rightarrow 0$ as $m\rightarrow +\infty$.
Besides, we also apply the Cauchy-Schwarz inequality to $\Rr_{i,m}$ in order to obtain
\begin{align*}
|\Rr_{i,m}|^2 & \leq \cter{c:main}  \sum_{n=1}^{N_m} \dt_m^n  \!\! \sum_{\sig \in \Ee_{i, m}} \!\! a_\sig \, \lambda_i
\big| \sqrt{\eta_\sig^n} - \sqrt{\wt \eta_\sig^n} \, \big|^2 \, \big|\fntest_K^{n-1} - \fntest_{K\sig}^{n-1}\big|^2 \\
& \leq   d \lambda_i   \cter{c:main} \lVert \grad\fntest \rVert_\infty^2   
\sum_{n=1}^{N_m} \dt_m^n \!\! \sum_{\sig \in \Ee_{i, m}} \!\! m_{\Delta_\sig} \left|{\eta_\sig^n} -{\wt \eta_\sig^n}\right| ,
\end{align*}
where we have set
\[
\wt \eta_\sig^n = \begin{cases}
\, \eta_i(s_K^n) & \text{if}\; p_K^n = p_{K\sig}^n,\\
\, \displaystyle\bigg[\frac{\kirsqr_i(p_{K}^n) - \kirsqr_i(p_{K\sig}^n)}{\sqrt{\lambda_i} (p_K^n - p_{K\sig}^n)}\bigg]^2 & \text{otherwise}.
\end{cases}
\]
Define 
\[
\wt \eta_{\Ee_m, \bdt_m}(t,x) = \begin{cases} 
\, \wt \eta_\sig^n & \text{if}\; (t,x) \in (t_m^{n-1}, t_m^n] \times \Delta_\sig, \; \sig \in \bigcup_{i=1}^I \Ee_{i,m}, \\
\, 0 & \text{otherwise}.
\end{cases}
\]
Reproducing the proof of Lemma~\ref{lem:eta_E}, we can show that 
\[
\wt \eta_{\Ee_m, \bdt_m} \underset{m\to\infty}\longrightarrow \eta(\Ss(p,x)) \qquad \text{in }\; L^q(Q_T), \; 1 \leq q < +\infty.
\]
Therefore, $\Rr_{i,m}\rightarrow 0$ as $m\rightarrow +\infty$. Putting things together in~\eqref{eq:diff.B}, we conclude that $\Bb_m^1$ and $\wt \Bb_{m}^1$ share the same limit, which completes the proof of Proposition~\ref{prop:identify}.
 \end{proof}


\section{Practical aspects of numerical resolution}
\label{sec:numerics}
We provide some details on the resolution strategy for the discrete problem \eqref{eq:scheme.capi}--\eqref{eq:scheme.upwind}. It is based on a parametrization technique to automatically choose the most convenient variable during the Newton iterations 
(\S\ref{ssec:variableSwitch}) and on the addition of cells on the interfaces between different rock types (\S\ref{ssec:BCET}) to improve the pressure continuity.

\subsection{Switch of variable and parametrization technique}
\label{ssec:variableSwitch}

A natural choice to solve the nonlinear system \eqref{eq:scheme.capi}--\eqref{eq:scheme.upwind} is to select the pressure $(p_K)_{K\in\Tt}$ as primary unknown and to solve it via an iterative method such as Newton's one. Nevertheless, the pressure variable is known to be an inefficient choice for $s \ll 1$ because of the degeneracy of Richards' equation. For dry soils, this strategy is outperformed by schemes in which saturation is the primary variable. On the other hand, the knowledge of the saturation is not sufficient to describe the pressure curve in saturated regions where the pressure-saturation relation cannot be inverted. This motivated the design of schemes involving a switch of variable \cite{DP99}--\cite{FWP95}. In this work, we adopt the technique proposed by Brenner and Cancès \cite{BC17}, in which a third generic variable $\tau$ is introduced to become the primary unknown of the system. Then the idea is to choose a parametrization of the graph $\{p, \mathcal{S}(p)\}$, i.e., to construct two functions $\fs: I \rightarrow [s_{\mathrm{rw}}, 1-s_{\mathrm{rn}}]$ and $\fp: I \rightarrow \mathbb{R}$ such that $\fs(\tau)=\mathcal{S}(\fp(\tau))$ and $\fs'(\tau)+\fp'(\tau)>0$ for all $\tau \in I \subset \mathbb{R}$. Such a parametrization is not unique, for instance one can take $I=\mathbb{R}$, $\fp=Id$ which amounts to solving the system always in pressure, but this is not recommended as explained before. Here, we set $I=(s_{\mathrm{rw}}, + \infty)$ and

\begin{align*}
\fs(\tau) = \begin{cases}
\tau & \text{if}\; \tau \leq s_{\rm s}, \\
\Ss\left(p_{\rm s} + \displaystyle\frac{\tau-s_{\rm s}}{\Ss'(p_{\rm s}^-)}\right) & \text{if}\; \tau \geq s_{\rm s}, \\
\end{cases}
\qquad 
\fp(\tau) = \begin{cases}
\Ss^{-1}(\tau) & \text{if}\; \tau \leq s_{\rm s}, \\
p_{\rm s} + \displaystyle\frac{\tau-s_{\rm s}}{\Ss'(p_{\rm s}^-)}& \text{if}\; \tau \geq s_{\rm s},
\end{cases}
\end{align*}
where $\Ss'(p_{\rm s}^-)$ denotes the limit as $p$ tends to $p_{\rm s}=\mathcal{S}(s_{\rm s})$ from below of $\Ss'(p)$.
Since the switch point $s_{\rm s}$ is taken as the inflexion point of $\Ss$, both $\fs$ and $\fp$ are $C^1$ and concave, and even $C^2$ if $\Ss$ is given by the Van Genuchten model. Moreover, for all $p\in \R$, there exists a unique $\tau \in  (s_{\rm rw},+\infty)$ such that 
$(p,\Ss(p)) = (\fp(\tau), \fs(\tau))$. 
The resulting system $\mathcal F_n(\boldsymbol{ \tau}^n) = \mathbf{0}$ made up of
$N_\Tt = {\rm Card}(\Tt)$ nonlinear equations admits a unique 
solution $\boldsymbol{\tau}^n$, since it is fully equivalent to \eqref{eq:scheme.capi}--\eqref{eq:scheme.upwind}. 
More details about the practical resolution of this nonlinear system via the Newton method can be found in \cite{BCET_FVCA9}.  

\subsection{Pressure continuity at rock type interfaces}
\label{ssec:BCET}
Physically, the pressure should remain continuous on both sides of an interface between two different rock types. But this continuity is here not imposed at the discrete level. The two-point flux approximation based on the cell unknowns is strongly dependent on the mesh resolution and can induce a large error close to the rock type interface. We here propose a very simple method to improve this continuity condition in pressure. It consists in adding two thin cells of resolution $\delta$ around the rock-type interface with $\delta \ll \Delta x$ as shown in Figure \ref{fig:vt2}. 
\begin{figure}[htb]
    \centering
    \includegraphics[width=6cm]{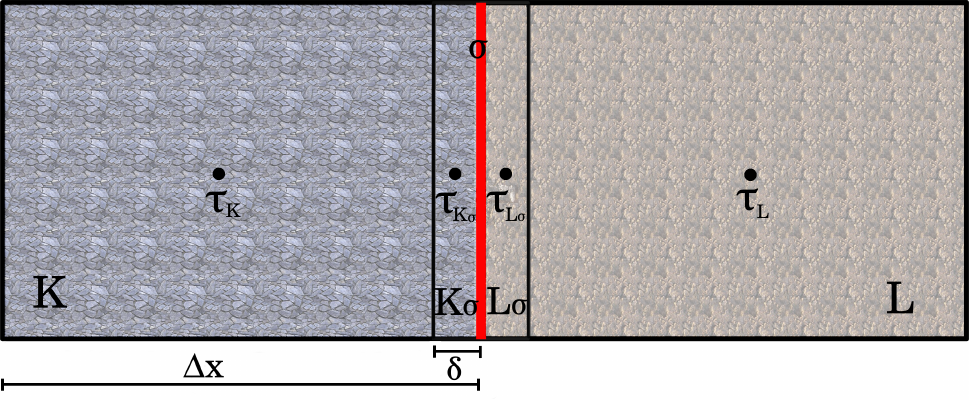}
    \caption{Mesh refinement on both sides of an interface face for a 2D case. }
    \label{fig:vt2}
\end{figure}
The idea is here to add two unknowns in the neighborhood of the interface to have a more precise approximation of the pressure gradient on each side of the faces where changes of rock types occur. In this way, we avoid the introduction of face unknowns in our solver which remains unchanged. Alternatives that strictly impose the pressure continuity will be studied and compared with this approach in a forthcoming work.

\section{Numerical results}\label{sec:results}

In this section, we present the results obtained for different test cases. For all these cases, we consider a two-dimensional layered domain $\Omega=[0\mathrm{m}, 5\mathrm{m}]\times [-3 \mathrm{m}, 0\mathrm{m}]$ made up of two rock types denoted by RT0 and RT1 respectively, RT0 being less permeable than RT1. Using these two lithologies, the domain $\Omega$ is partitioned into three connected subdomains: $\Omega_1=[1\mathrm{m}, 4\mathrm{m}] \times [-1\mathrm{m}, 0\mathrm{m}]$, $\Omega_2= [0 \mathrm{m},5 \mathrm{m}] \times [-3 \mathrm{m}, -2\mathrm{m}]$ and $\Omega_3= \Omega ~\setminus~ (\Omega_1 \cup \Omega_2)$, {as depicted in Figure} \ref{fig:testDomain}.
\begin{figure}[htb]
    \centering
    \includegraphics[width=6.4cm]{./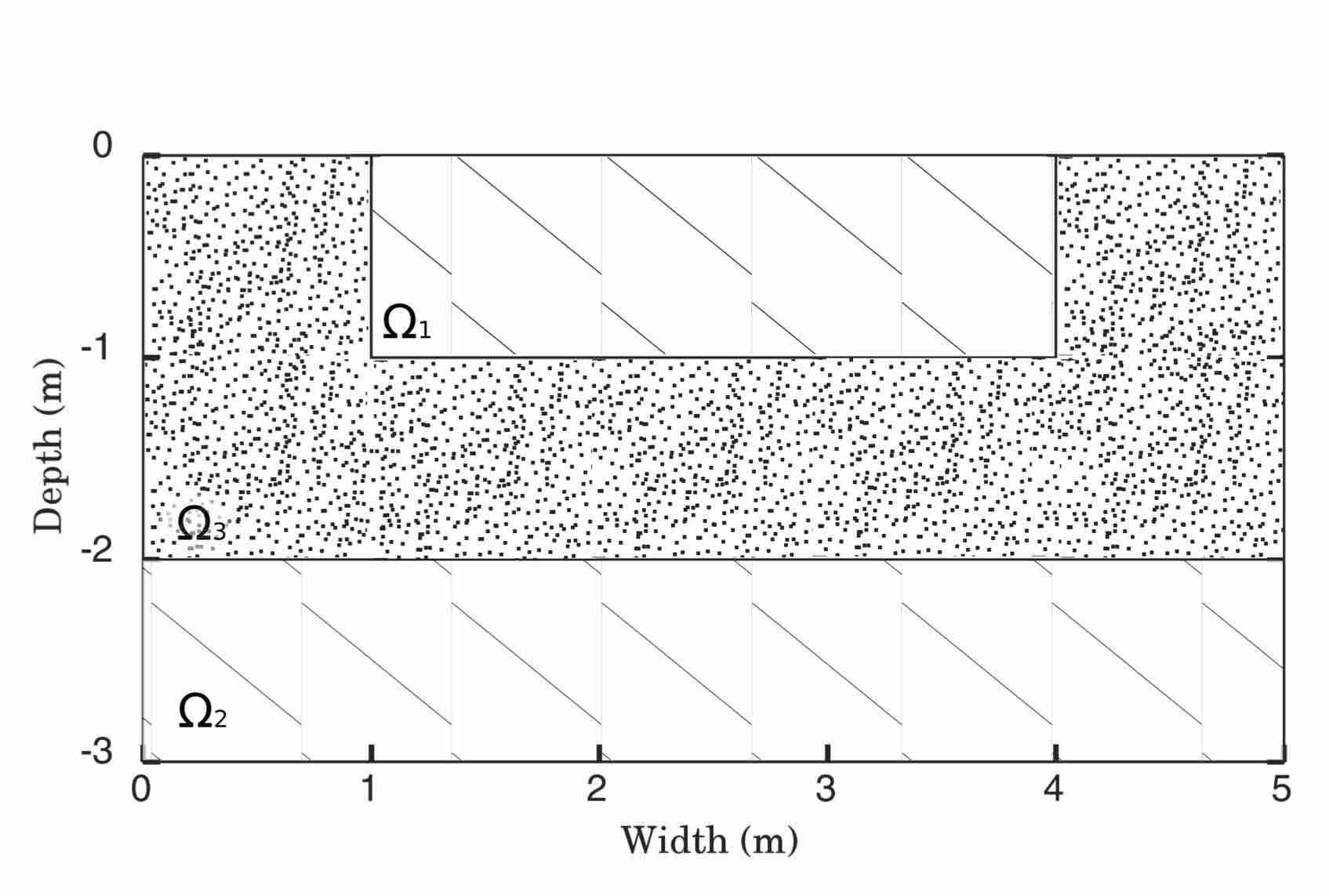}
    \caption{Simulation domain $\Omega=[0\mathrm{m},5\mathrm{m}]\times[-3\mathrm{m},0\mathrm{m}]$.}
     \label{fig:testDomain}
\end{figure}

The Brooks-Corey \cite{BROOKS} and van Genuchten-Mualem \cite{VANGENUCHTEN} petro-physical models are used to model the flow characteristics of both rock types. In these models, the water saturation and the capillary pressure are linked pointwise by the relation $s=\mathcal{S}(p)$ where $\mathcal{S}: \mathbb{R} \rightarrow [0,1]$ is nondecreasing and satisfies $\mathcal{S}(p)=1 - s_{\mathrm{rn}}$ if $p \geq p_b$ and $\mathcal{S}(p)\rightarrow s_{\mathrm{rw}}$ as $p \rightarrow -\infty$, $s_{\mathrm{rw}}$ being the residual wetting saturation, $s_{\mathrm{rn}}$ the residual non-wetting saturation and $p_b$ the entry pressure. More precisely, we have,
\begin{itemize}
\item for the Brooks-Corey model,
\begin{align*}
&s= \mathcal S(p)=\begin{cases}
       s_\mathrm{rw} + (1-s_\mathrm{rn}-s_\mathrm{rw})\left( \frac{p}{p_b} \right)^{- n} & \text{if}~p \leq p_b,\\
       1-s_\mathrm{rn} & \text{if}~p > p_b,
      \end{cases} \nonumber\\
 &\nonumber\\     
&k_r(s)=s_{\mathrm{eff}}^{3+ \frac{2}{n}}, \qquad\qquad  s_{\mathrm{eff}}= \frac{s-s_\mathrm{rw}}{1-s_\mathrm{rn}-s_\mathrm{rw}};\nonumber
\end{align*}

\item for the van Genuchten-Mualem model,
\begin{align*}
&s= \mathcal S(p)=\begin{cases}
        s_\mathrm{rw} + (1-s_\mathrm{rn}-s_\mathrm{rw}) \left[1 + \left|\frac{\alpha}{\rho g} p \right|^n\right ]^{-m} & \text{if}~p \leq 0,\\
	1-s_\mathrm{rn} & \text{if}~p > 0,
      \end{cases} \nonumber \\
  &\nonumber\\    
&k_r(s)=s_{\mathrm{eff}}^{\frac{1}{2}} \{1 - [1- s_{\mathrm{eff}}^{\frac{1}{m}}]^m \}^2 , \qquad s_{\mathrm{eff}}= \frac{s-s_\mathrm{rw}}{1-s_\mathrm{rn}-s_\mathrm{rw}},\qquad\qquad m= 1 -\frac{1}{n};\nonumber
\end{align*}
\end{itemize}
where $\eta(\cdot)= k_r(\cdot)/\mu$, $\mu= 10^{-3}~\textrm{Pa}\cdot\textrm{s}$ being water viscosity, is the relative permeability. 
The parameters used for both rock types are given in Table \ref{table:SoilsBC} for the Brooks-Corey model and in Table \ref{table:SoilsVG} for the van Genuchten-Mualem model.
{With these choices of parameters, water is more likely to be in RT1 than in RT0, in the sense tha, at a given pressure, the water saturation is higher in RT1 than in RT0, as it can be seen on the
plots of the capillary-pressure functions depicted in Figures \ref{img:lawsBC}--\ref{img:lawsVG} for these two petro-physical models.  
Figures \ref{img:lawsBC}--\ref{img:lawsVG} also show the relative permeability functions. Note the non-Lipschitz character of the relative permeability in the van Genuchten-Mualem framework.} {For the numerical tests, in order to avoid infinite values for the derivative of $k_r(s)$ when $s\rightarrow 1-s_\mathrm{rn}$, we approximate it for $s\in[s_{lim},1-s_\mathrm{rn}]$ using a second degree polynomial $\widetilde{k_r}(s)$. Such a polynomial satisfies the following constraints: $k_r(s_{lim})=\widetilde{k_r}(s_{lim})$ and $\widetilde{k_r}(1-s_\mathrm{rn})=1$. The value $s_{lim}$ {corresponds to $s_{\text{eff}}=0.998$.} }

\begin{table}[htp!]
\centering
\begin{tabular}{p{0.8cm}p{1.2cm}p{1cm}p{2.2cm}p{1cm}p{1cm}p{1cm}}
\hline
& $1-s_{rn}$ & $s_{rw}$ & $p_b [\textrm{Pa}]$ & $n$ & $\lambda [\mathrm{m}^2]$ & $\phi$ \\
\hline
RT0 &  $1.0$ & $0.1$ & $-1.4708 \cdot 10^3$ & $3.0$ &  $10^{-11}$ & $0.35$ \\
RT1 &$1.0$ &$0.2$& $-3.4301 \cdot 10^3$  & $1.5$ & $10^{-13}$ & $0.35$\\
\hline\\
\end{tabular}
\caption{Parameters used for the Brooks-Corey model}
\label{table:SoilsBC}
\end{table}

\begin{figure}[htb]
    \centering
    \includegraphics[height=4cm]{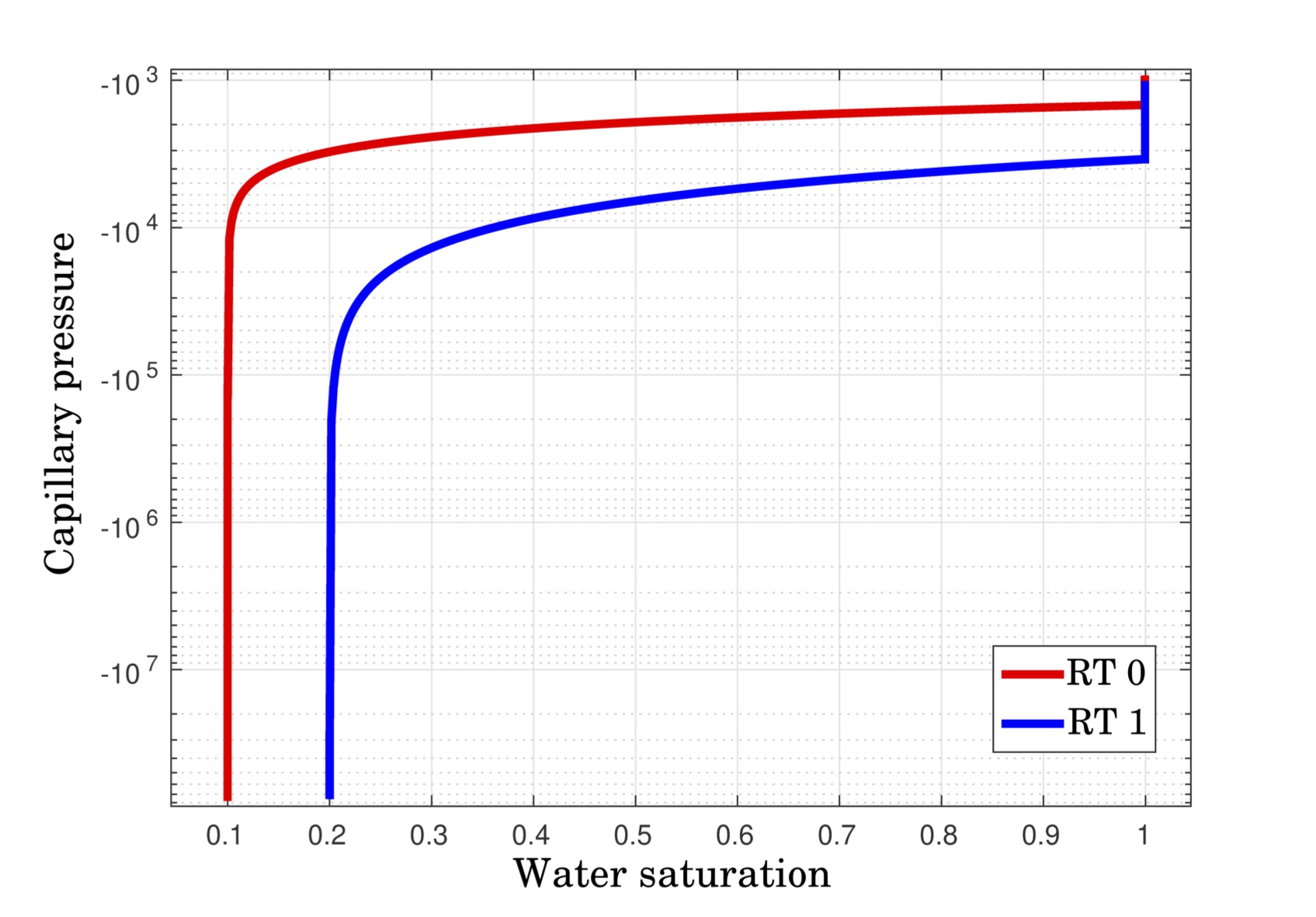}
    \includegraphics[height=4cm]{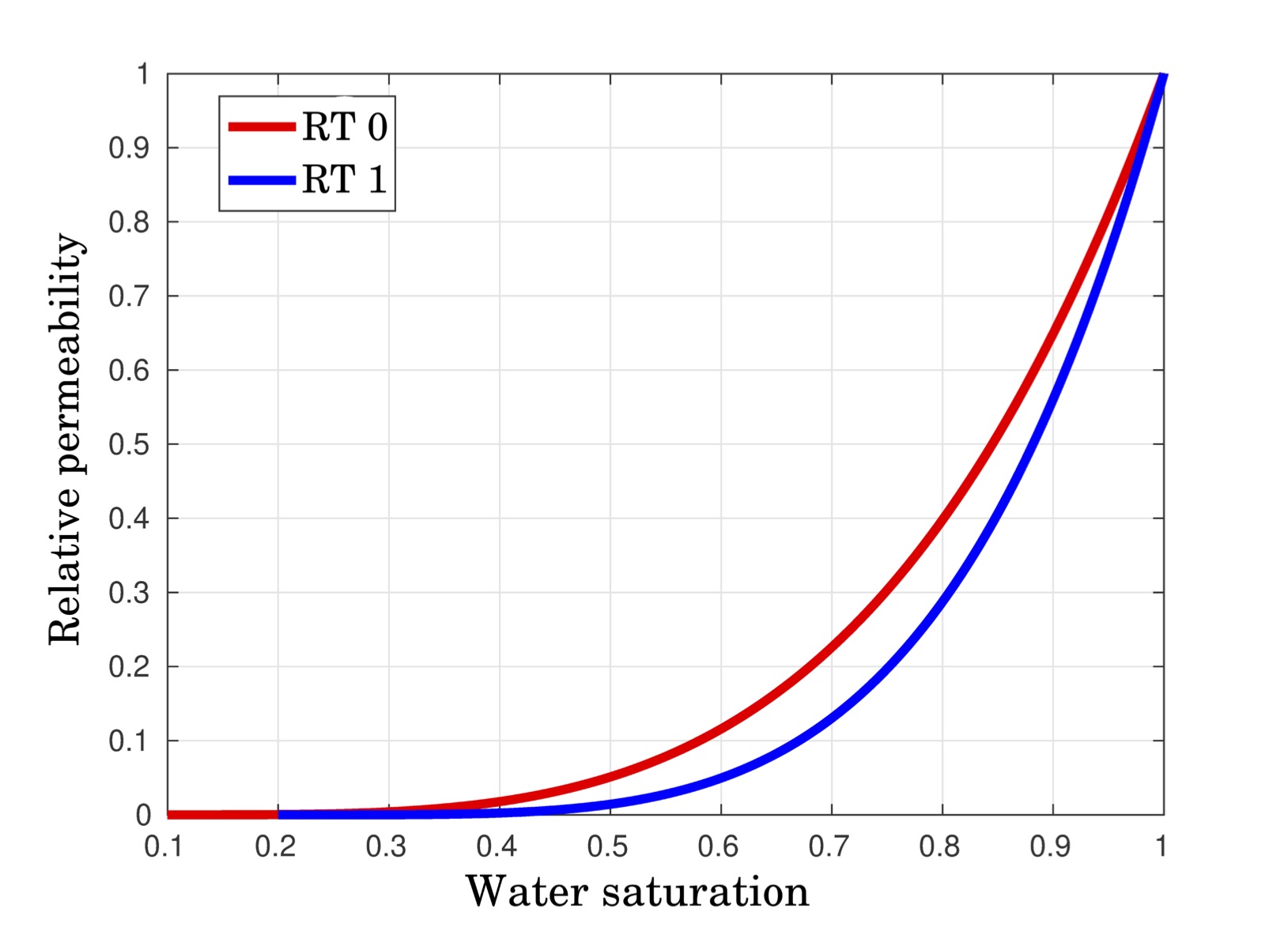}
    \caption{Capillary pressure and relative permeability curves for the Brooks-Corey model}
    \label{img:lawsBC}
\end{figure}

\begin{table}[htp!]
\centering
\begin{tabular}{p{1.8cm}p{1.2cm}p{1cm}p{1cm}p{2.2cm}p{1.1cm}p{1cm}p{1cm}}
\hline
& $1-s_{\rm rn}$  & $s_{\rm rw}$ & $n$  & $\lambda~[\mathrm{m}^2]$ & $\alpha~[\mathrm{m}^{-1}]$ & $\phi$ \\
\hline
RT0 (Sand) & $1.0$  & $0.0782$   & $2.239$ & $6.3812 \cdot 10^{-12}$ & $2.8 $  & $0.3658$\\
RT1 (Clay) & $1.0$  & $ 0.2262 $ & $1.3954$ & $ 1.5461 \cdot 10^{-13}$ & $1.04$ & $0.4686$\\
\hline\\
\end{tabular}
\caption{Parameters used for the van Genuchten-Mualem model}
\label{table:SoilsVG}
\end{table}

\begin{figure}[htp]
    \centering
    \includegraphics[height=4cm]{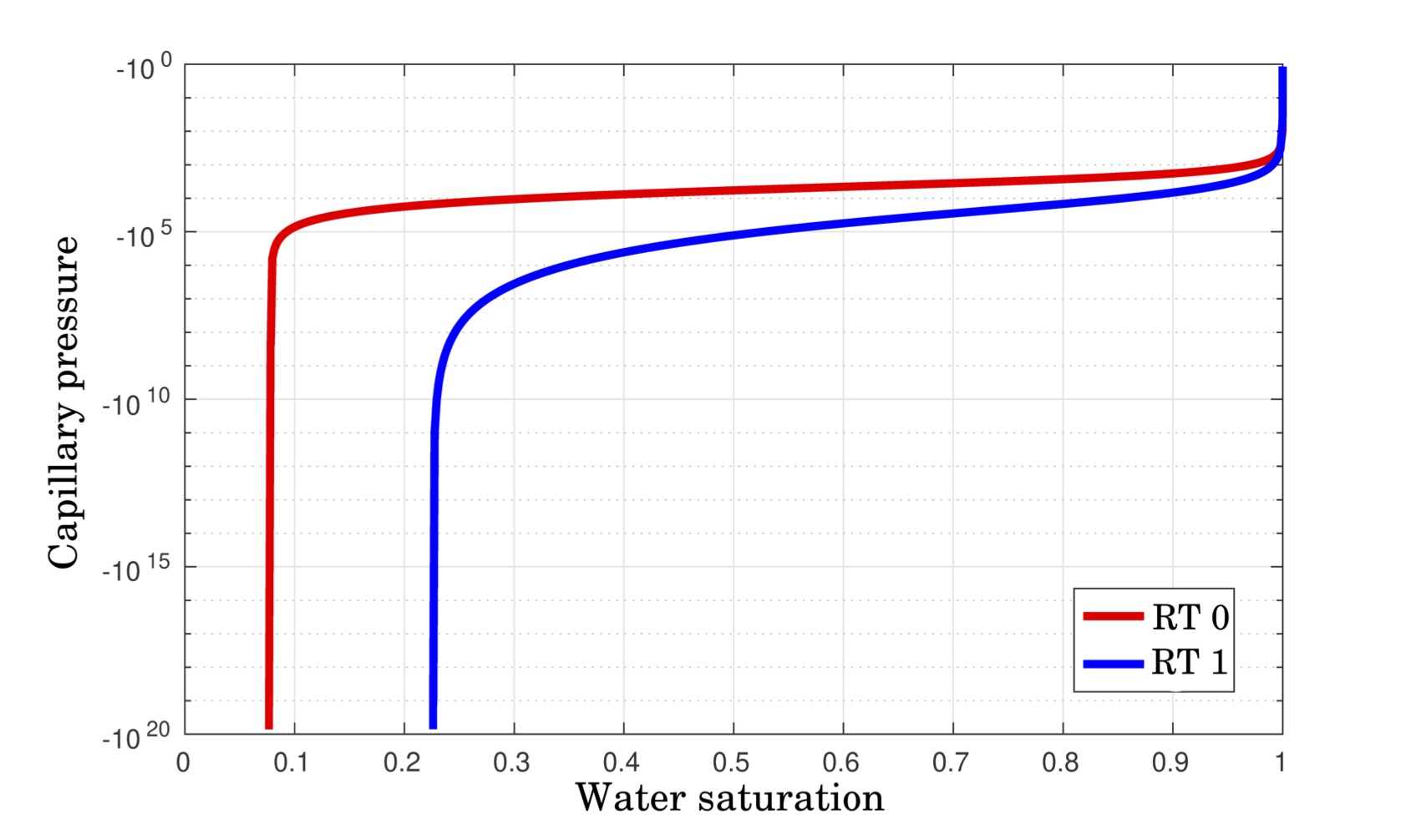}
    \includegraphics[height=4cm]{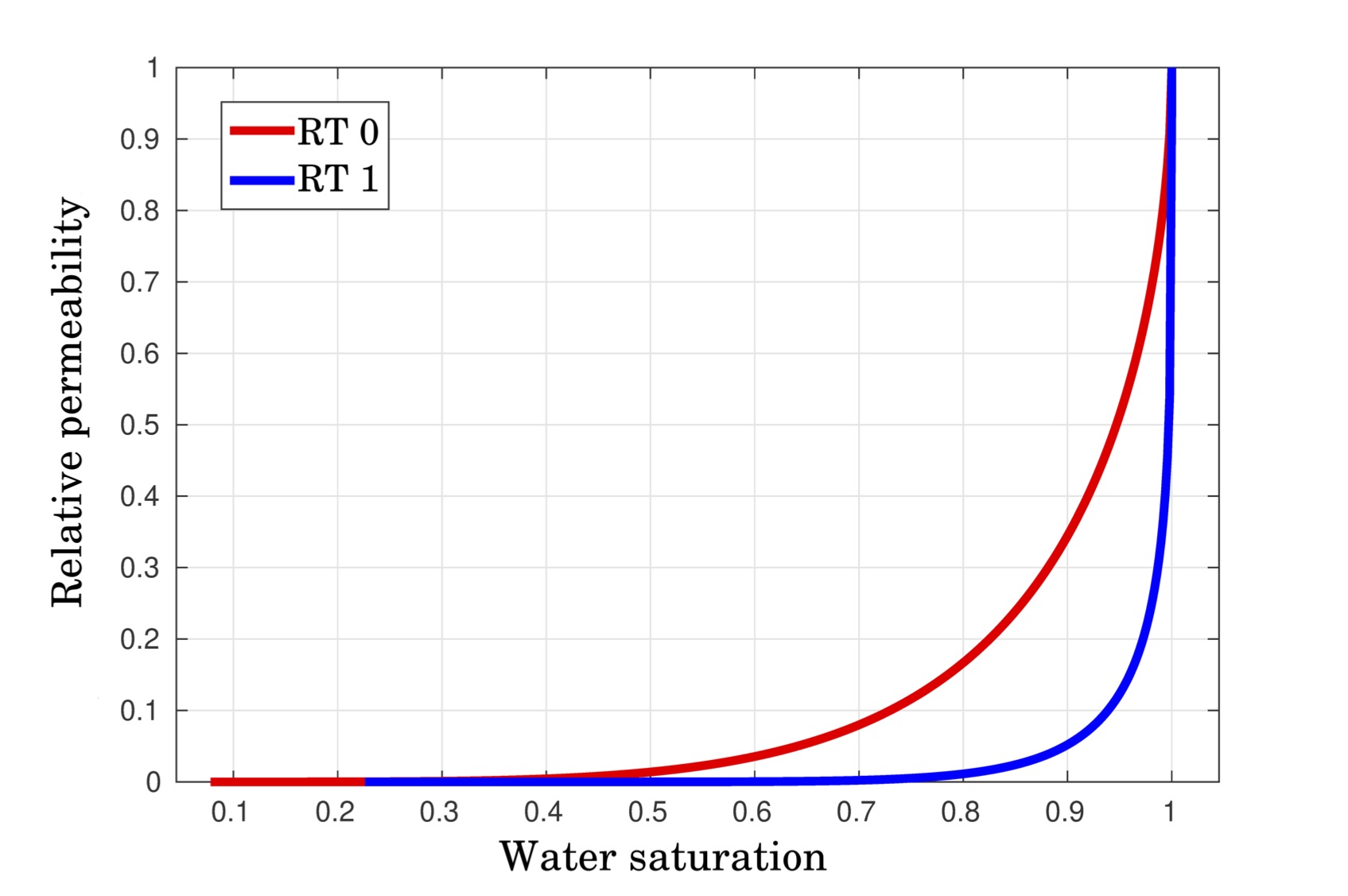}
    \caption{Capillary pressure and relative permeability curves for the van Genuchten-Mualem model}
    \label{img:lawsVG}
\end{figure}

\subsection{Configurations {of the test cases}}
For both petro-physical models, we consider two configurations further referred as filling and drainage cases, which are described in the following. 

\subsubsection{Filling case}
{The filling test case has already been considered in~\cite{KHW92,FWP95, McBride06,CZ10}}. Starting from an initially dry domain $\Omega$, whose layers' composition is reported in Figure \ref{fig:fillingTest}, water flows from a part of the top boundary during one day. A no-flow boundary condition is applied elsewhere. More precisely, the initial capillary pressure is set to $-47.088 \cdot 10^5\textrm{Pa}$ and the water flux rate to $0.5 \textrm{m/day}$ through $\Gamma_N=\{(x,y) \,| \, x \in [1\mathrm{m}, 4\mathrm{m}], y=0\mathrm{m}\}$. For this simulation an homogeneous time-step $\Delta t= 1000\mathrm{s}$ is prescribed for the test using the Brooks-Corey model and $\Delta t= 500\mathrm{s}$ for the one using the van Genuchten-Mualem model.

\begin{figure}[htb]
    \centering
    \includegraphics[height=5cm]{./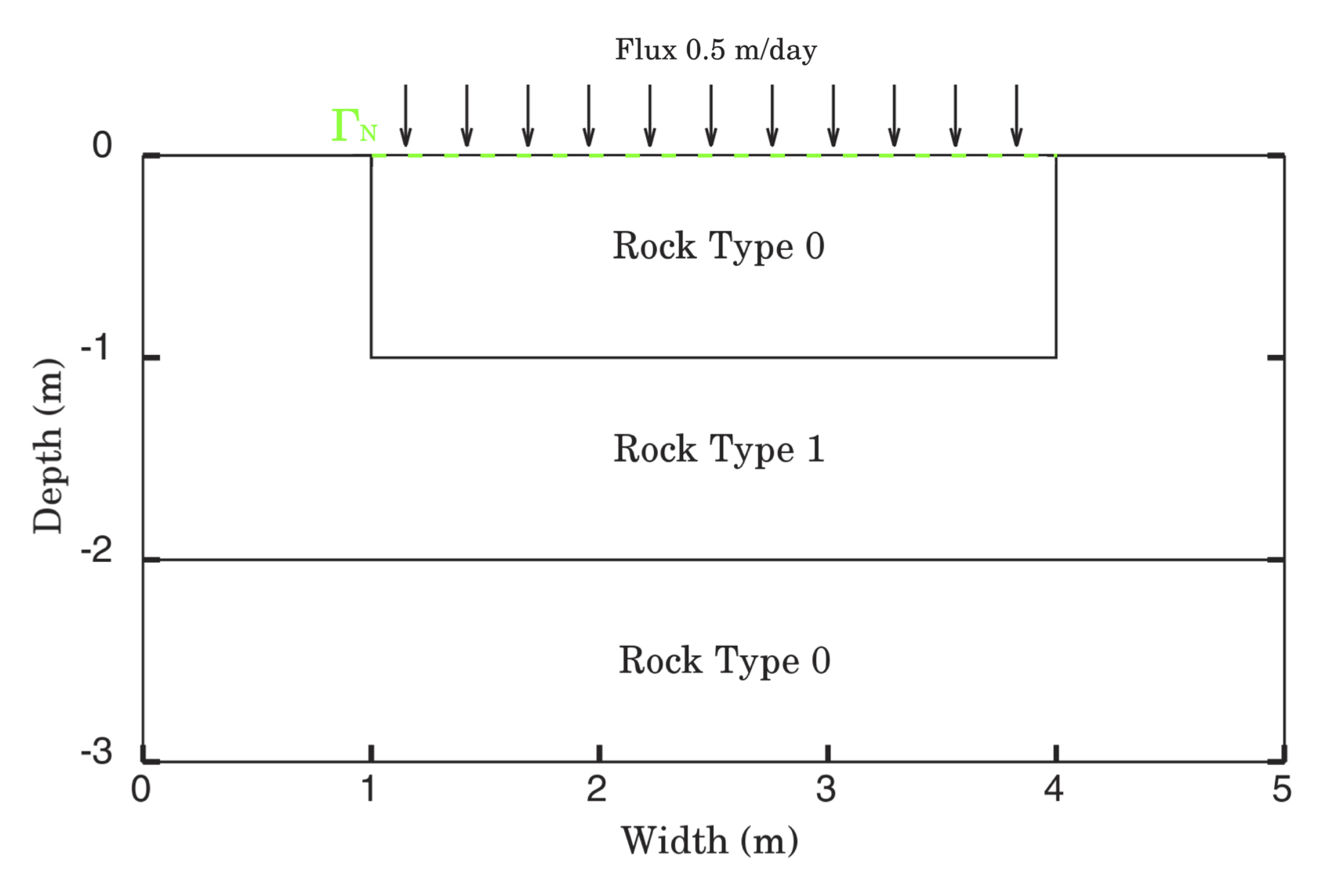}
    \caption{Boundary condition for the filling case}
    \label{fig:fillingTest}
\end{figure}

{The test case follows the following dynamics. Water starts invading the void porous space in $\Omega_1$. When it reaches the interface with $\Omega_3$, capillarity involves a suction force on water from $\Omega_1$ to $\Omega_3$. Since clay (RT1) has low permeability, water encounters difficulties to progress within $\Omega_3$. This yields a front moving downward in $\Omega_1$ which is stiffer for the Brooks-Corey model than for the van Genuchten-Mualem one. In both cases, the simulation is stopped before water reaches the bottom part corresponding to $\Omega_2$.
In Figure \ref{img:solutionFillBC} we can observe the evolution of the saturation profile during the simulation performed on a $50\times30$ cells mesh with Brooks and Corey model, whereas the evolution corresponding to van Genuchten-Mualem nonlinearities is depicted in Figure~\ref{img:solutionFillVG}.} 
    \newline
    \begin{minipage}{0.05\textwidth}
        \includegraphics[height=6cm]{./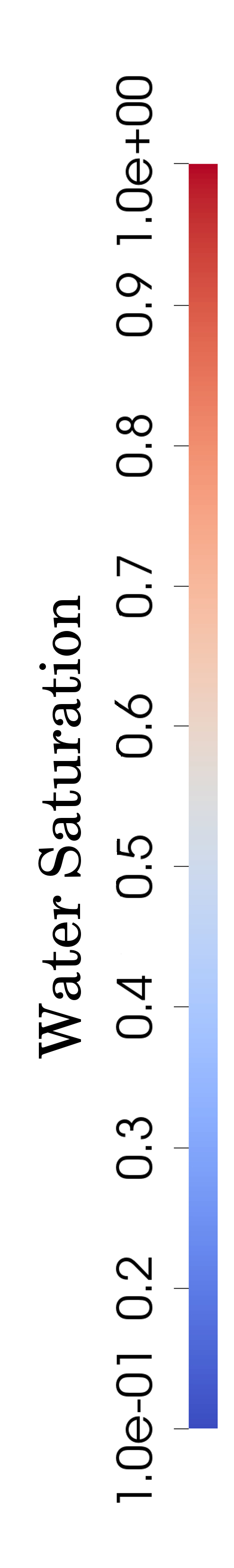}
    \end{minipage}
    \begin{minipage}{0.9\textwidth}
    \begin{center}
        \includegraphics[width=4.4cm]{./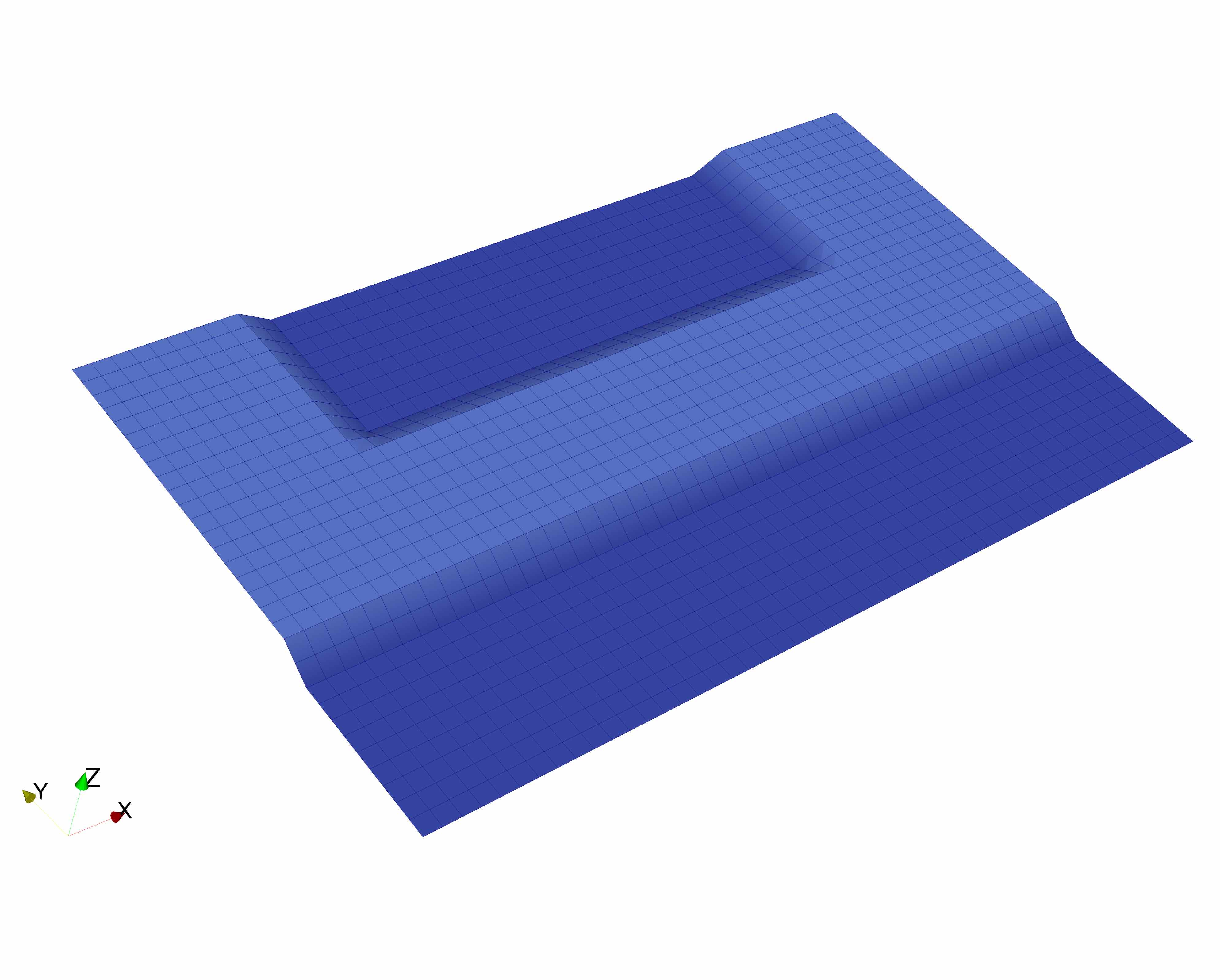}
        \includegraphics[width=4.4cm]{./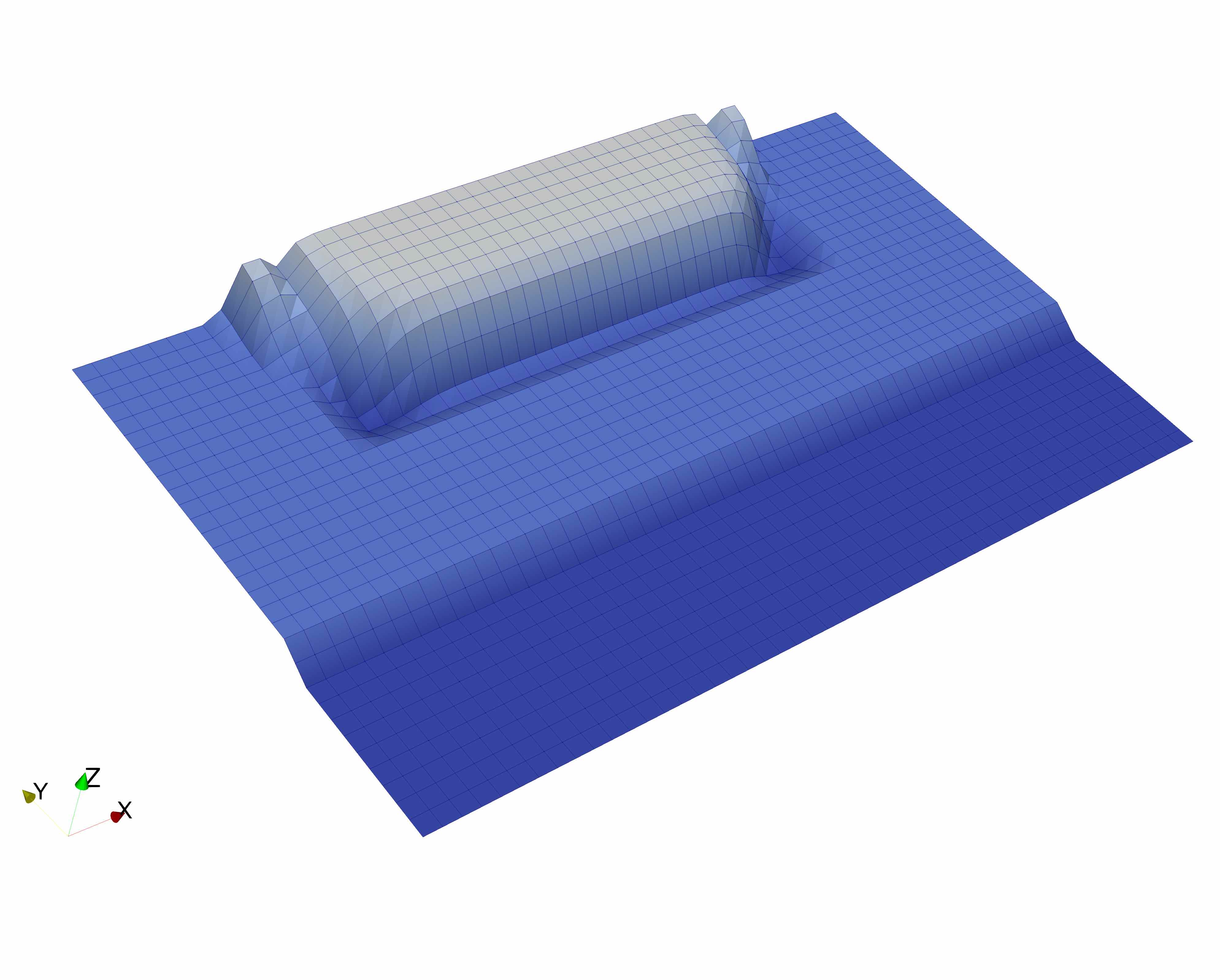}
        \includegraphics[width=4.4cm]{./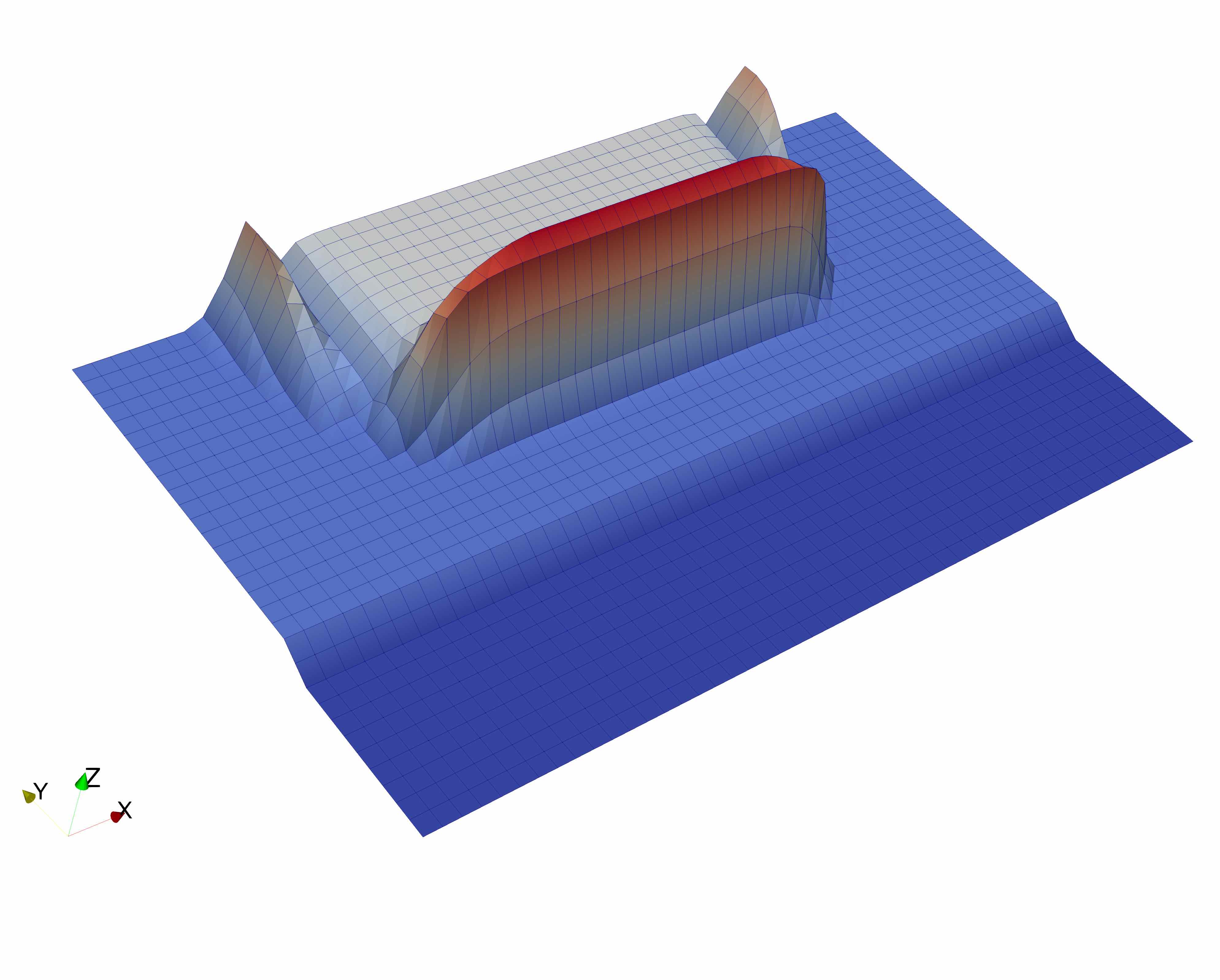}
        \includegraphics[width=4.4cm]{./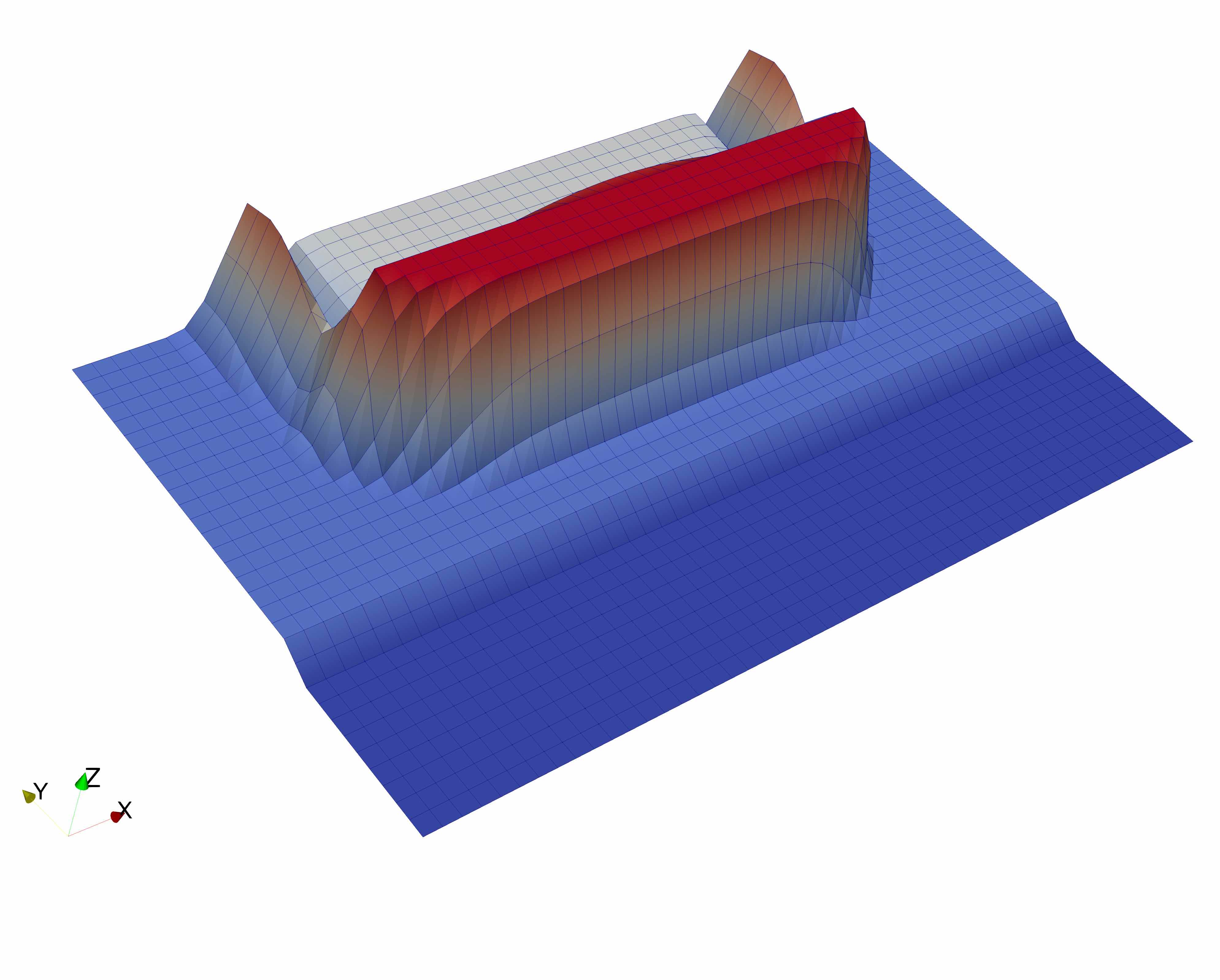}
        \includegraphics[width=4.4cm]{./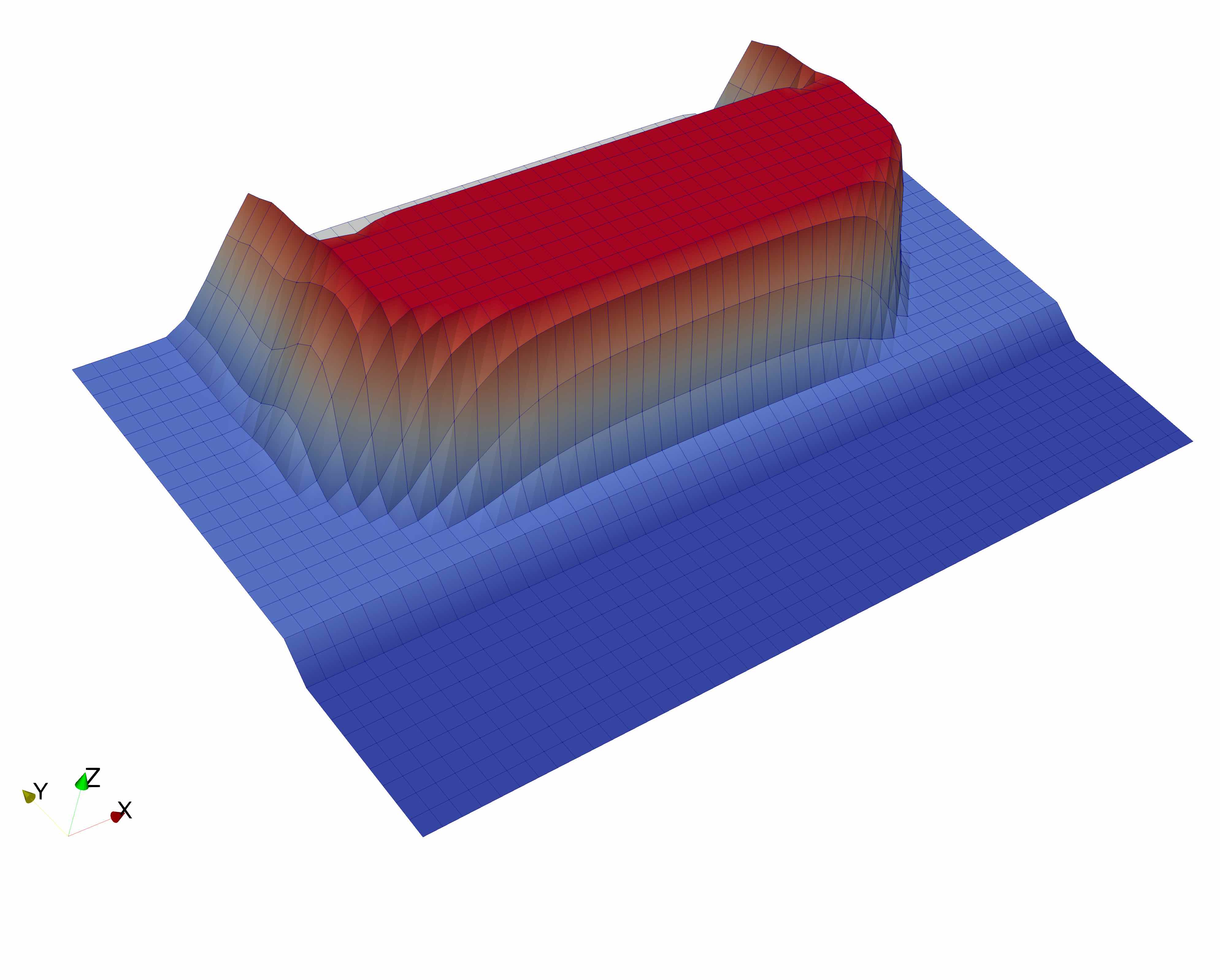}
        \captionof{figure}{Evolution of the saturation profile for $t\in\{0s,20\cdot 10^3s, 40\cdot 10^3s, 60\cdot10^3s, 86\cdot10^3s\}$ for filling case, using Brooks and Corey model, Method A and the $50\times30$ cells mesh.}
        \label{img:solutionFillBC}
        \end{center}
    \end{minipage}

      \begin{minipage}{0.05\textwidth}
        \includegraphics[height=6cm]{./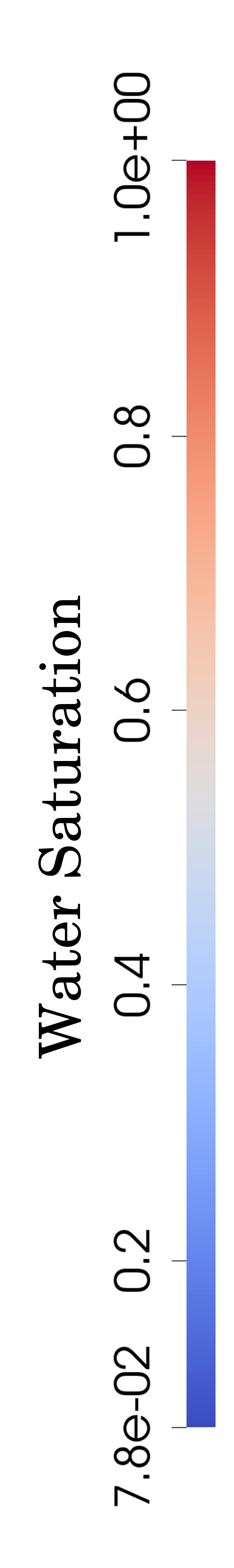}
    \end{minipage}
    \begin{minipage}{0.9\textwidth}
    \begin{center}
        \includegraphics[width=4.4cm]{./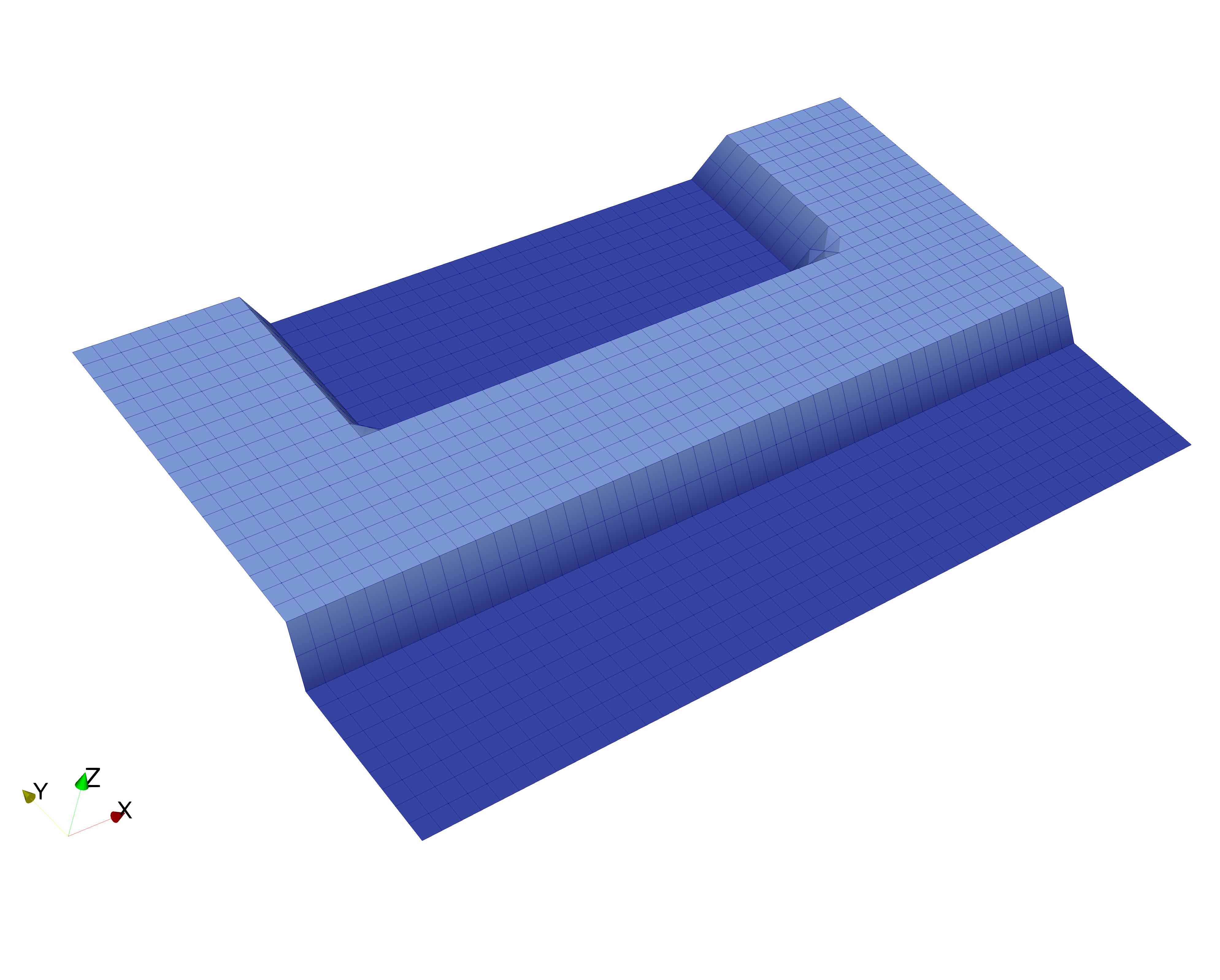}
        \includegraphics[width=4.4cm]{./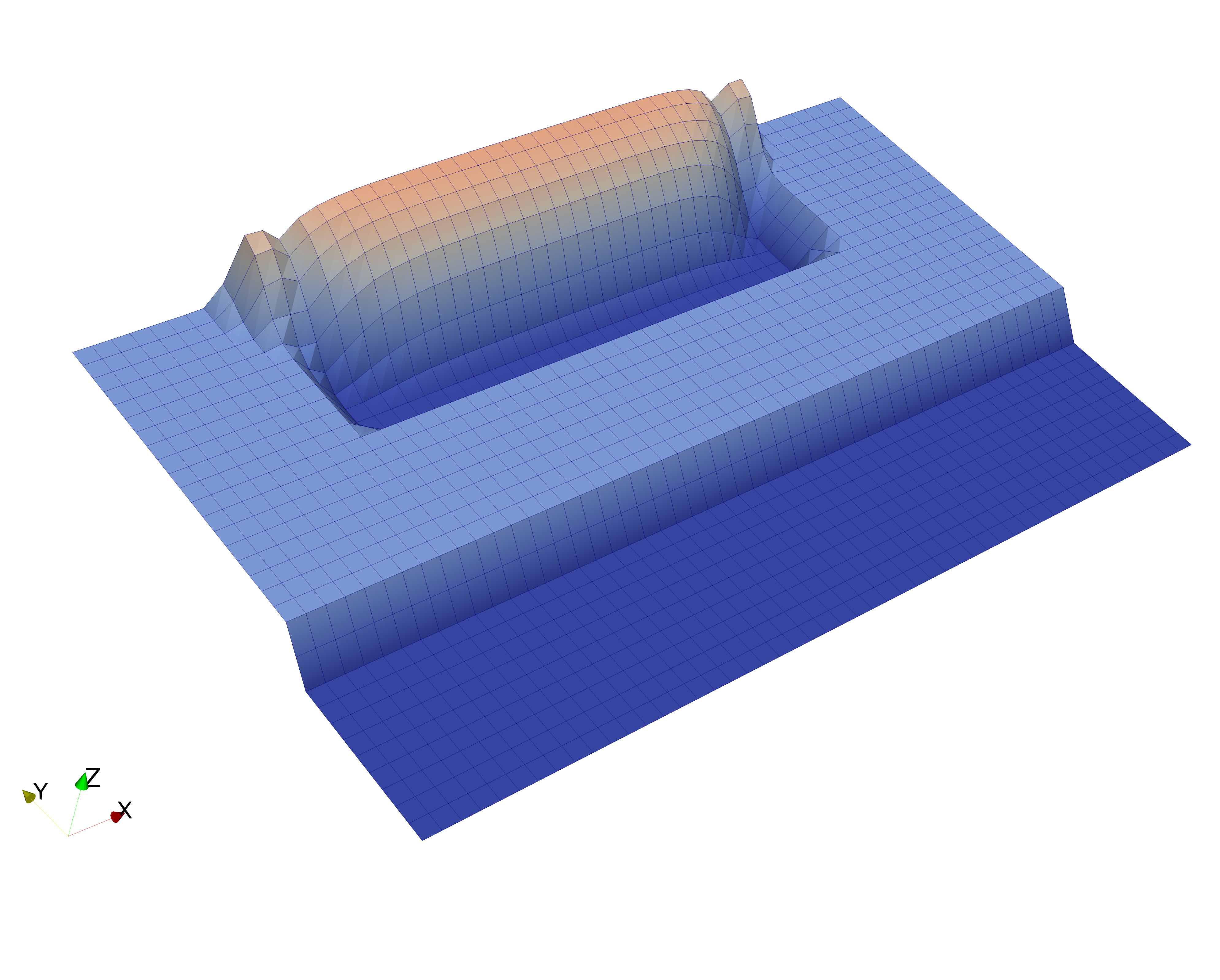}
        \includegraphics[width=4.4cm]{./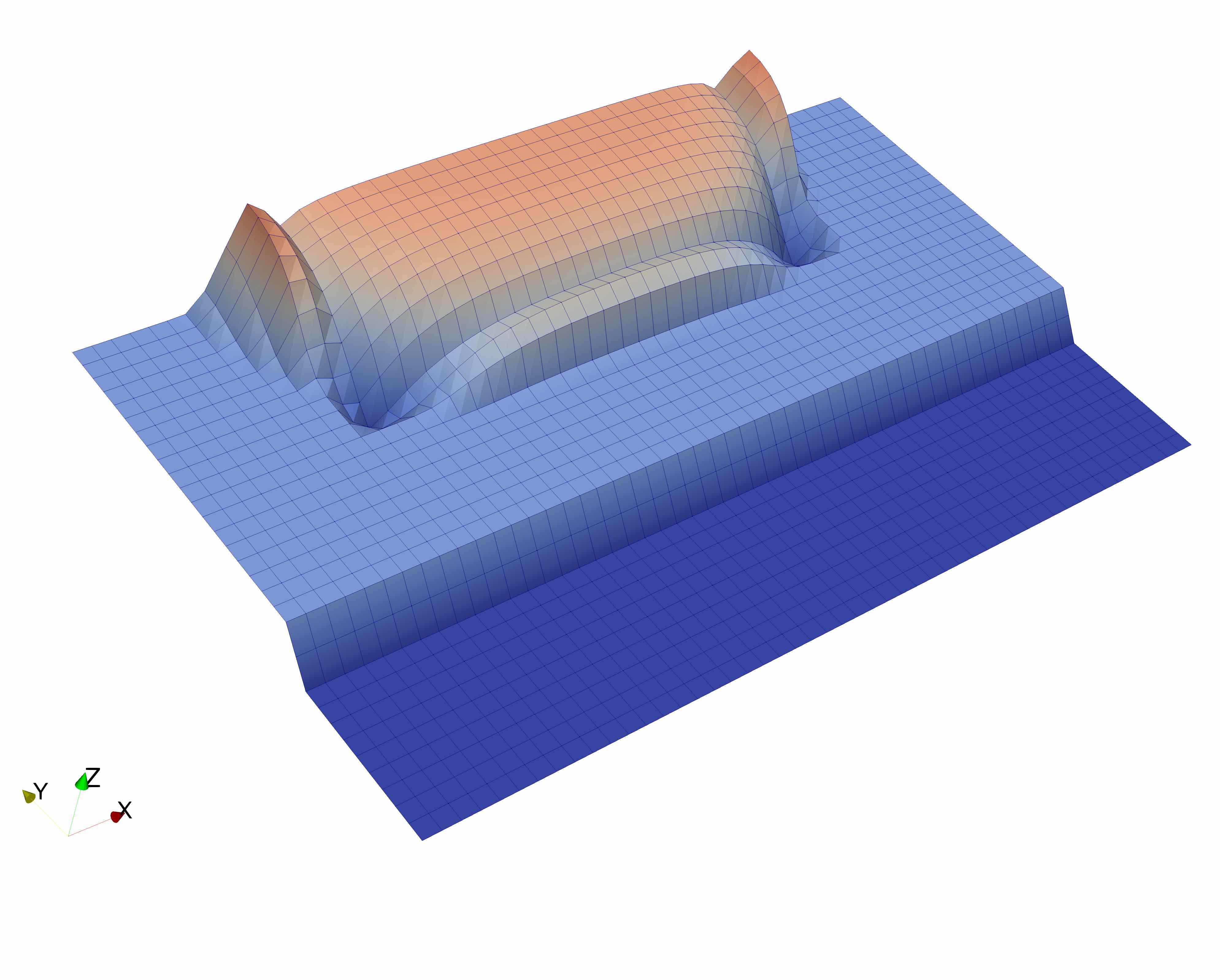}
        \includegraphics[width=4.4cm]{./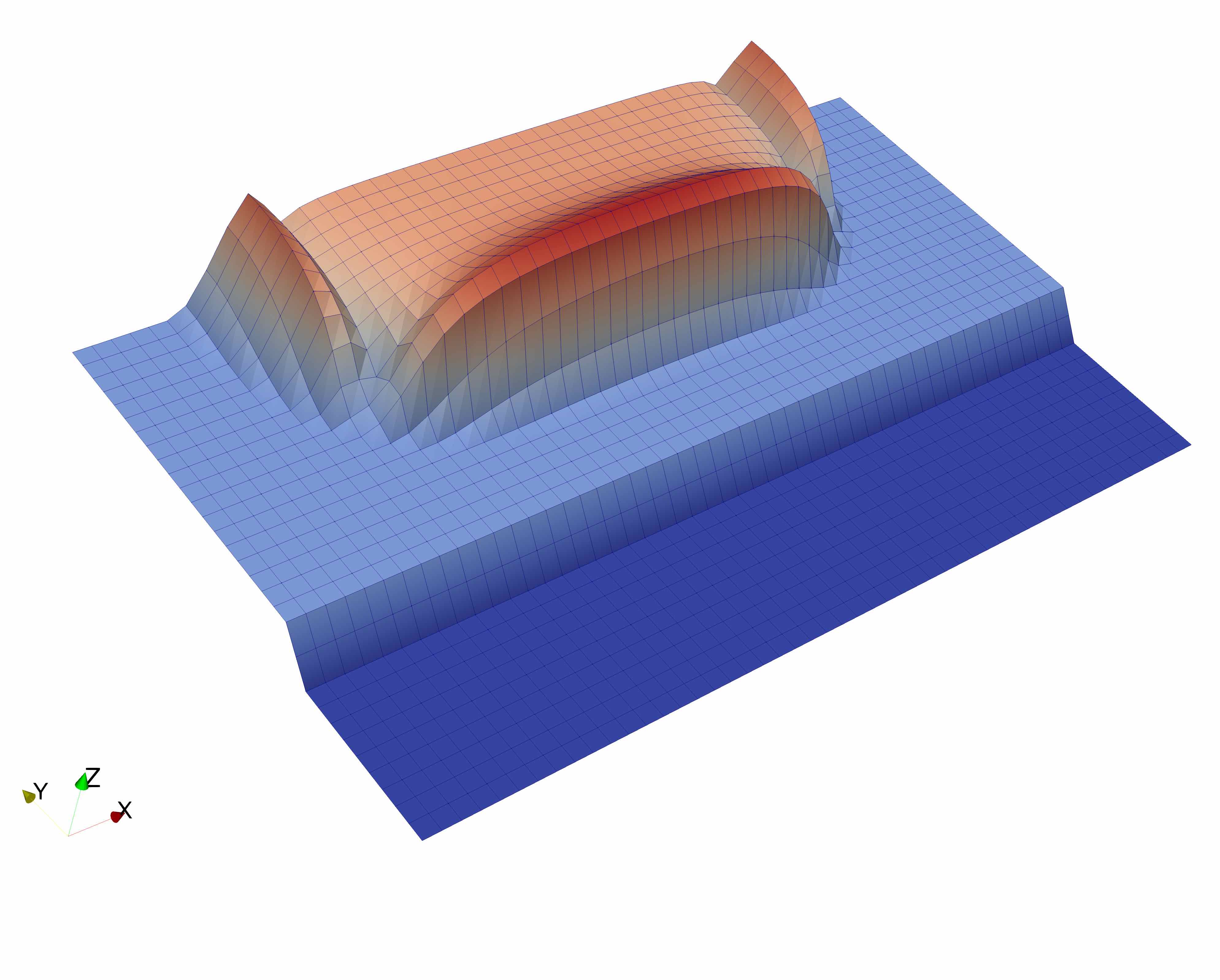}
        \includegraphics[width=4.4cm]{./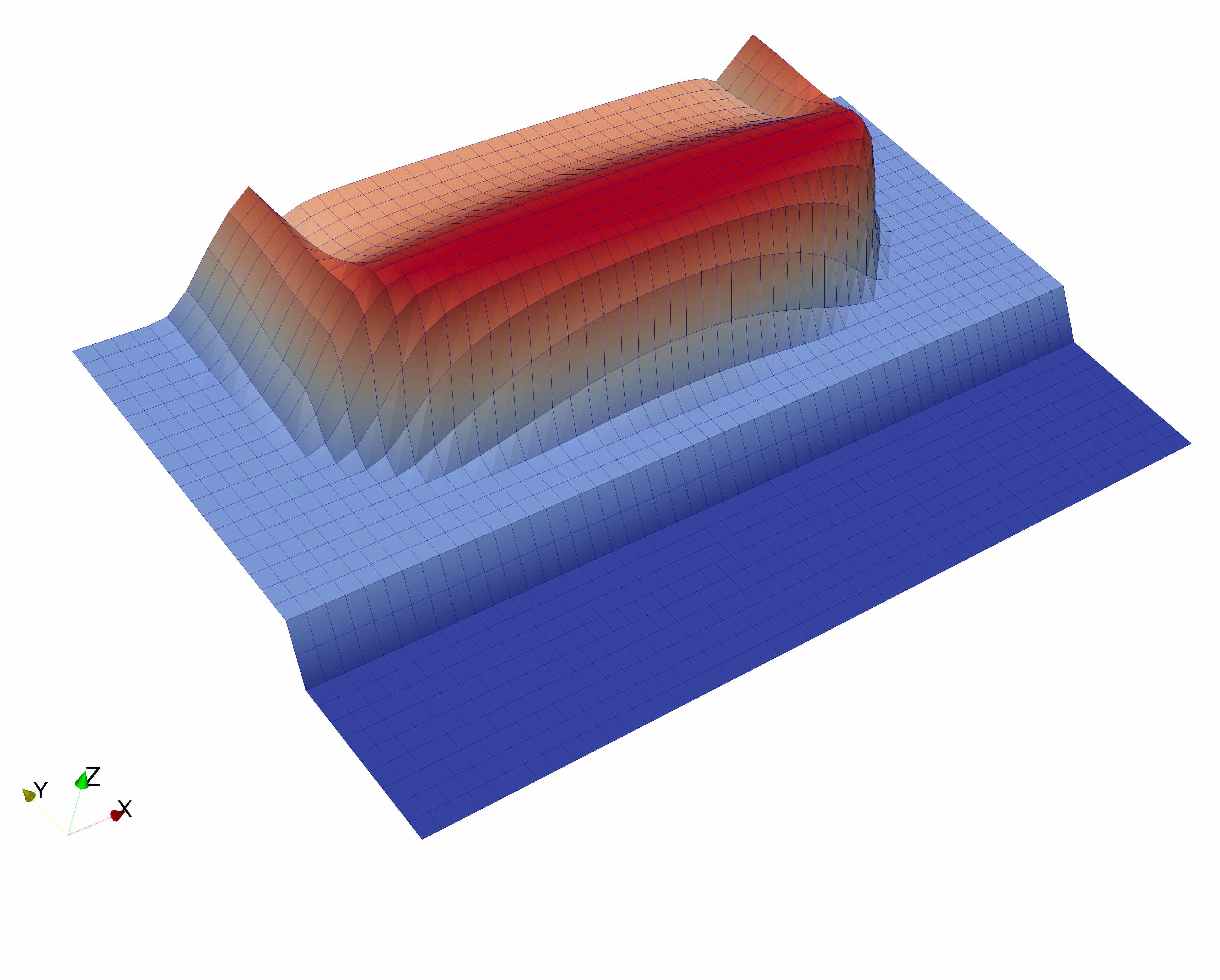}
        \captionof{figure}{Evolution of the saturation profile for $t\in\{0s,20\cdot 10^3s, 40\cdot 10^3s, 60\cdot10^3s, 86\cdot10^3s\}$ for filling case using Van Genuchten model, Method A and the $50\times30$ cells mesh.}
        \label{img:solutionFillVG}
        \end{center}
    \end{minipage}

\subsubsection{{Drainage case}}
{This test case is designed as a two-dimensional extension of a one-dimensional test case proposed by \cite{MD99} and addressed in \cite{McBride06,CZ10}.}
We simulate a vertical drainage starting from initially and boundary saturated conditions during $105 \cdot 10^4~\mathrm{s}$. At the initial time, the pressure varies with depth with $p^0(z)=-\rho g z$. A Dirichlet boundary condition $p_D=0~\textrm{Pa}$ is imposed on the bottom of the domain, more precisely on $\Gamma_D=\{(x,y) \,| \, x\in[0\mathrm{m},5\mathrm{m}], y=-3\mathrm{m}\}$. The layers' composition of $\Omega$ is reported in Figure \ref{fig:drainageTest}. For this simulation an homogeneous time-step $\Delta t= 2000\mathrm{s}$ is used for the test with the Brooks-Corey model and $\Delta t= 800\mathrm{s}$ for the one with the van Genuchten-Mualem model.

\begin{figure}[htp]
    \centering
    \includegraphics[height=5cm]{./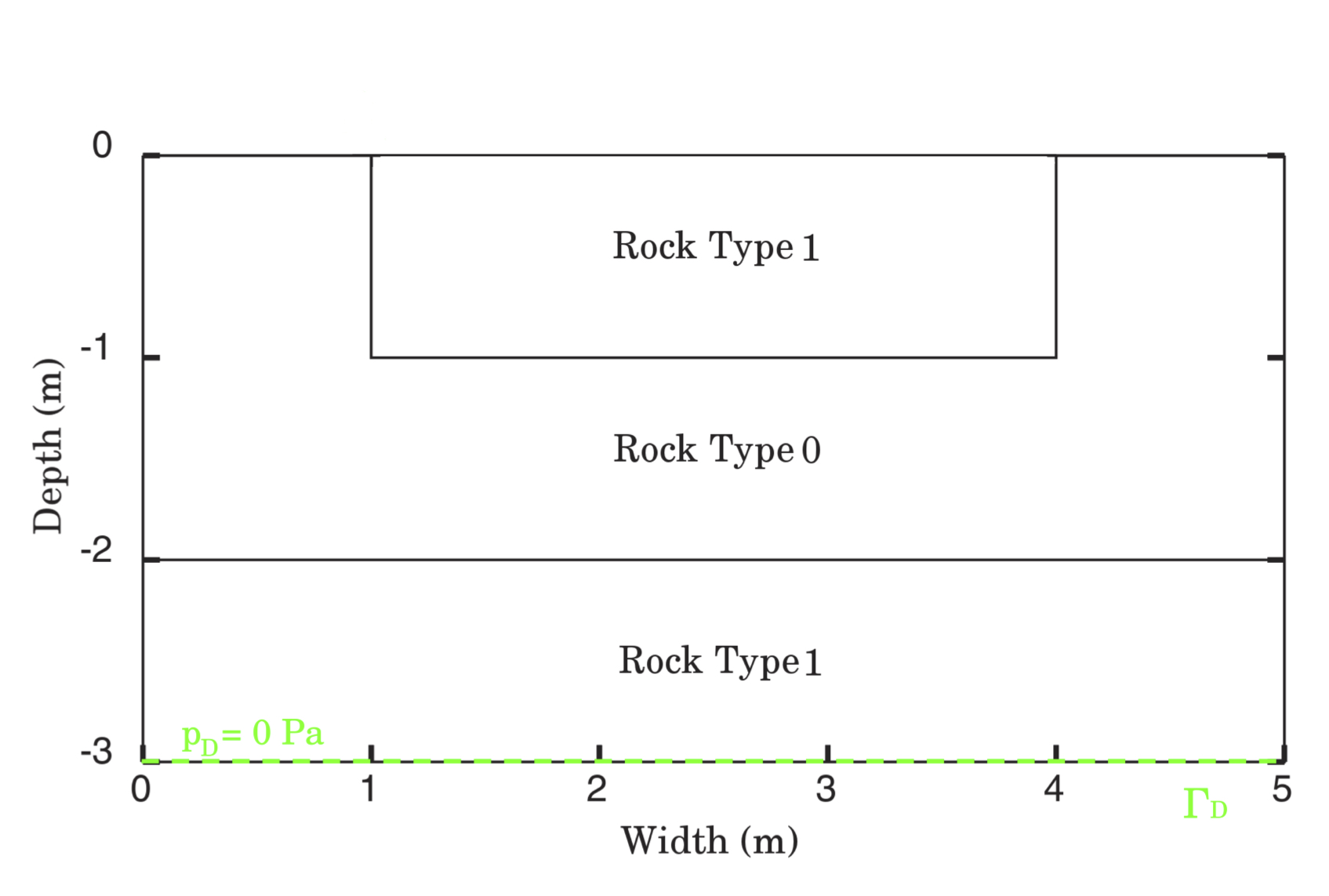}
    \caption{Boundary condition for the drainage test}
    \label{fig:drainageTest}
\end{figure}

{At the top interface between $\Omega_1$ and $\Omega_3$, capillarity acts in opposition to gravity and to the evolution of the system into a dryer configuration. The interface between $\Omega_2$ and $\Omega_3$ acts in the reverse way: suction accelerates the gravity driven drainage of RT0.}

In Figure \ref{img:solutionDryBC} we can observe the evolution of the saturation profile during the simulation performed on a $50\times30$ cells mesh with Brooks and Corey model, whereas the evolution corresponding to van Genuchten-Mualem nonlinearities is depicted in Figure~\ref{img:solutionDryVG}. \newline
     \begin{minipage}{0.025\textwidth}
    \vspace{-1.8cm}
        \includegraphics[height=5cm]{./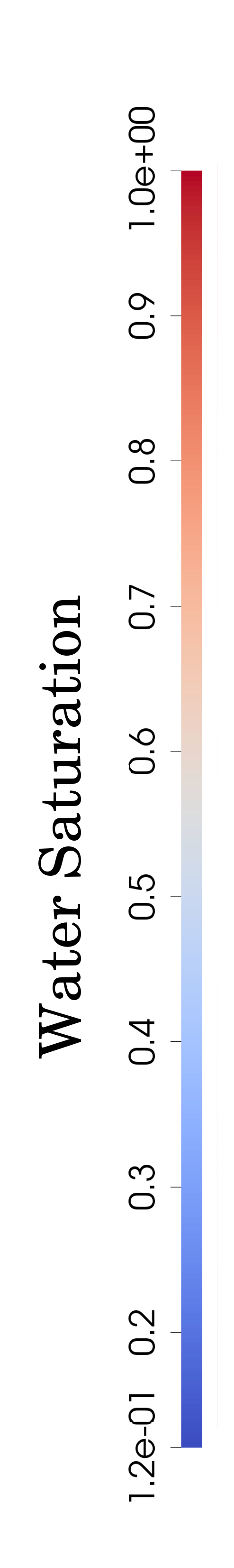}
    \end{minipage}
    \begin{minipage}{0.95\textwidth}
    \begin{center}
        \includegraphics[width=4.4cm]{./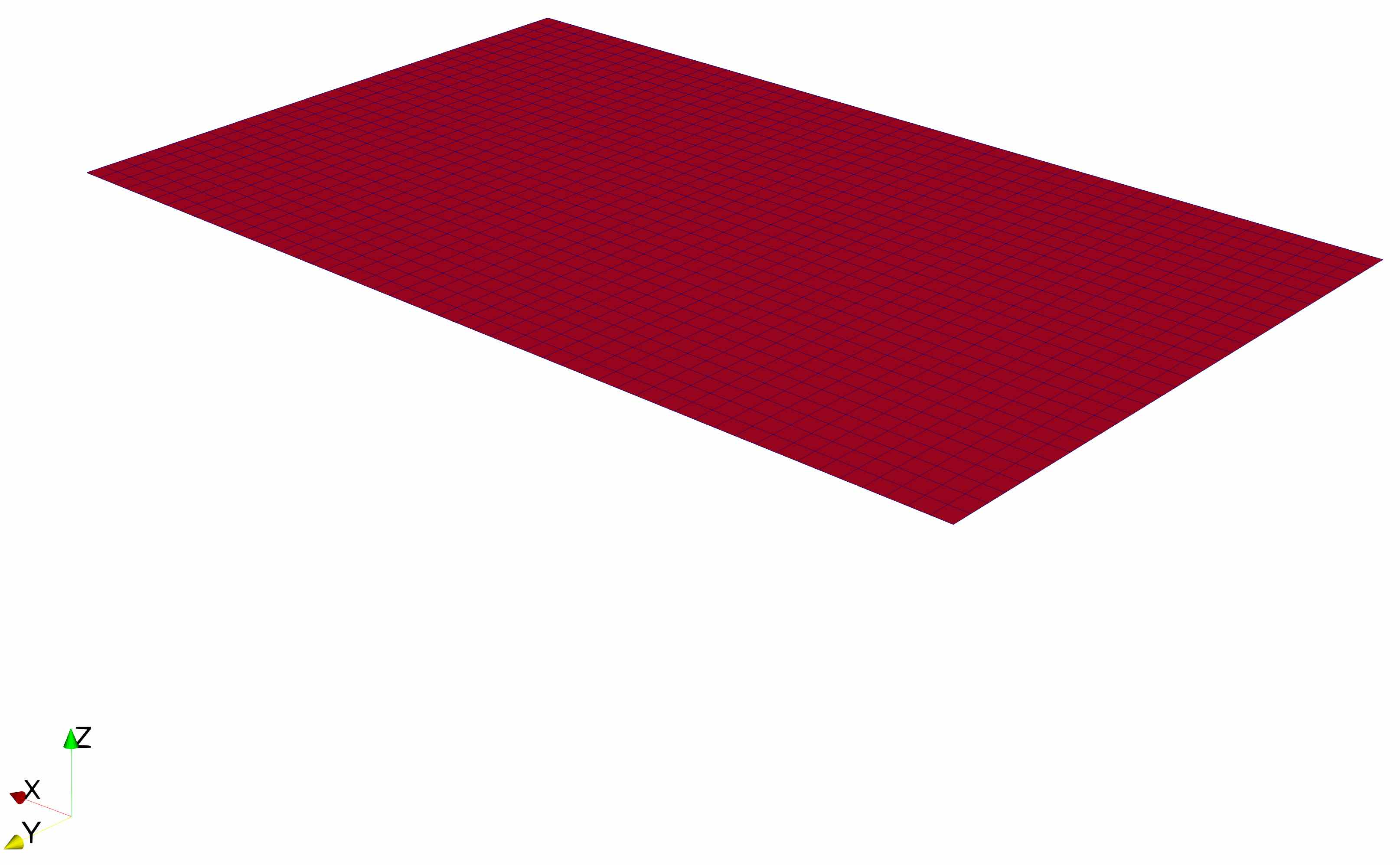}
        \includegraphics[width=4.4cm]{./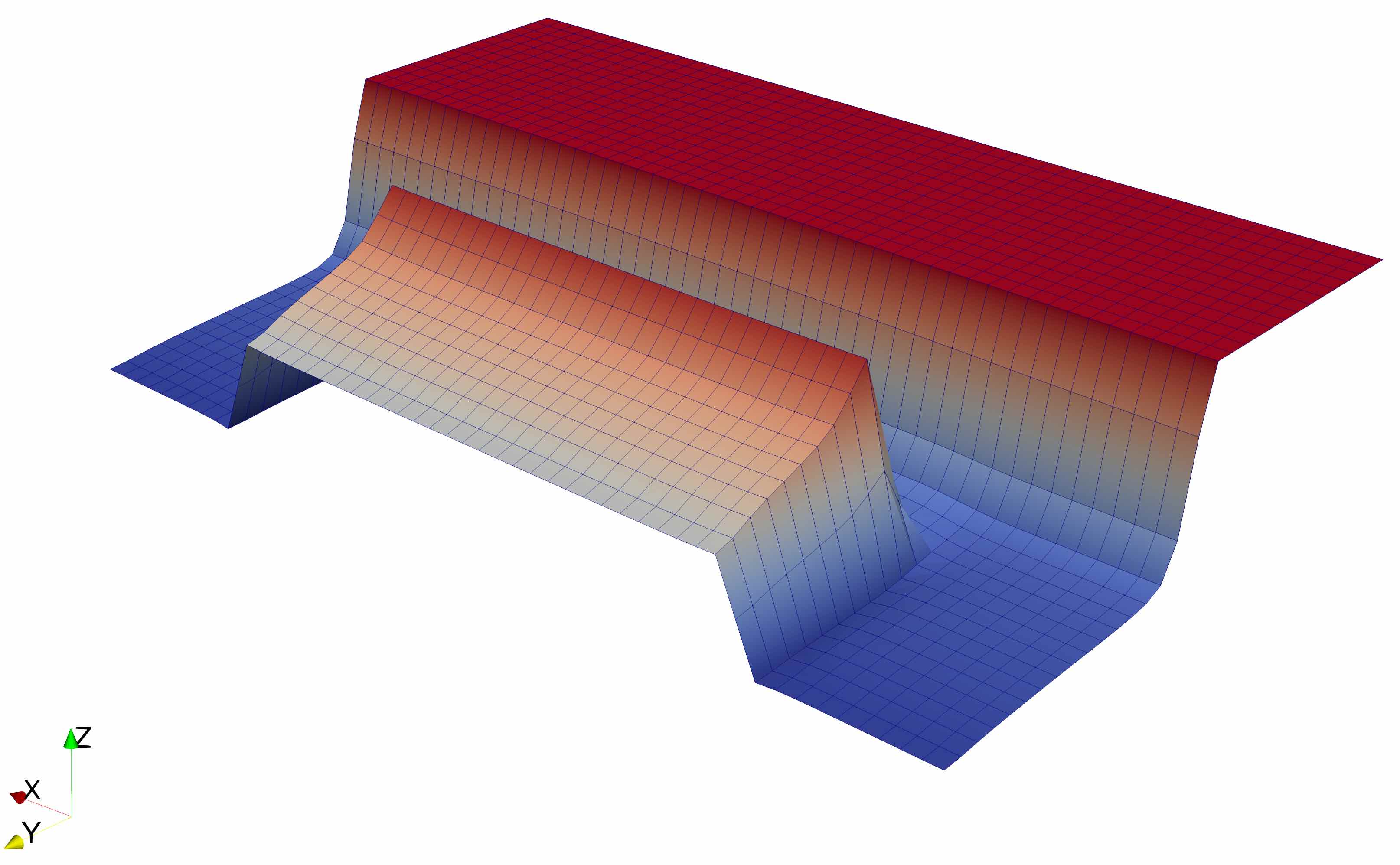}
        \includegraphics[width=4.4cm]{./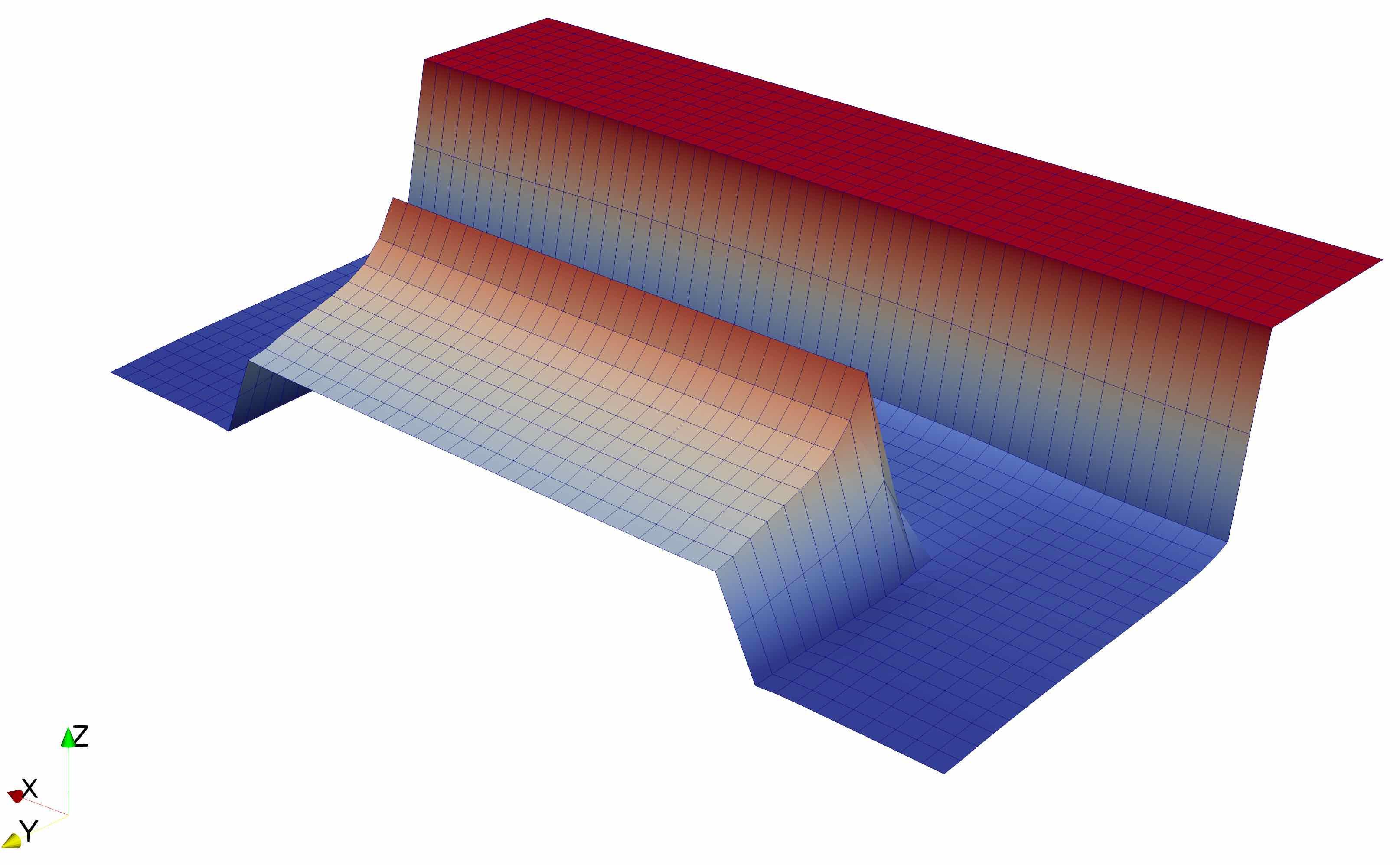}
        \includegraphics[width=4.4cm]{./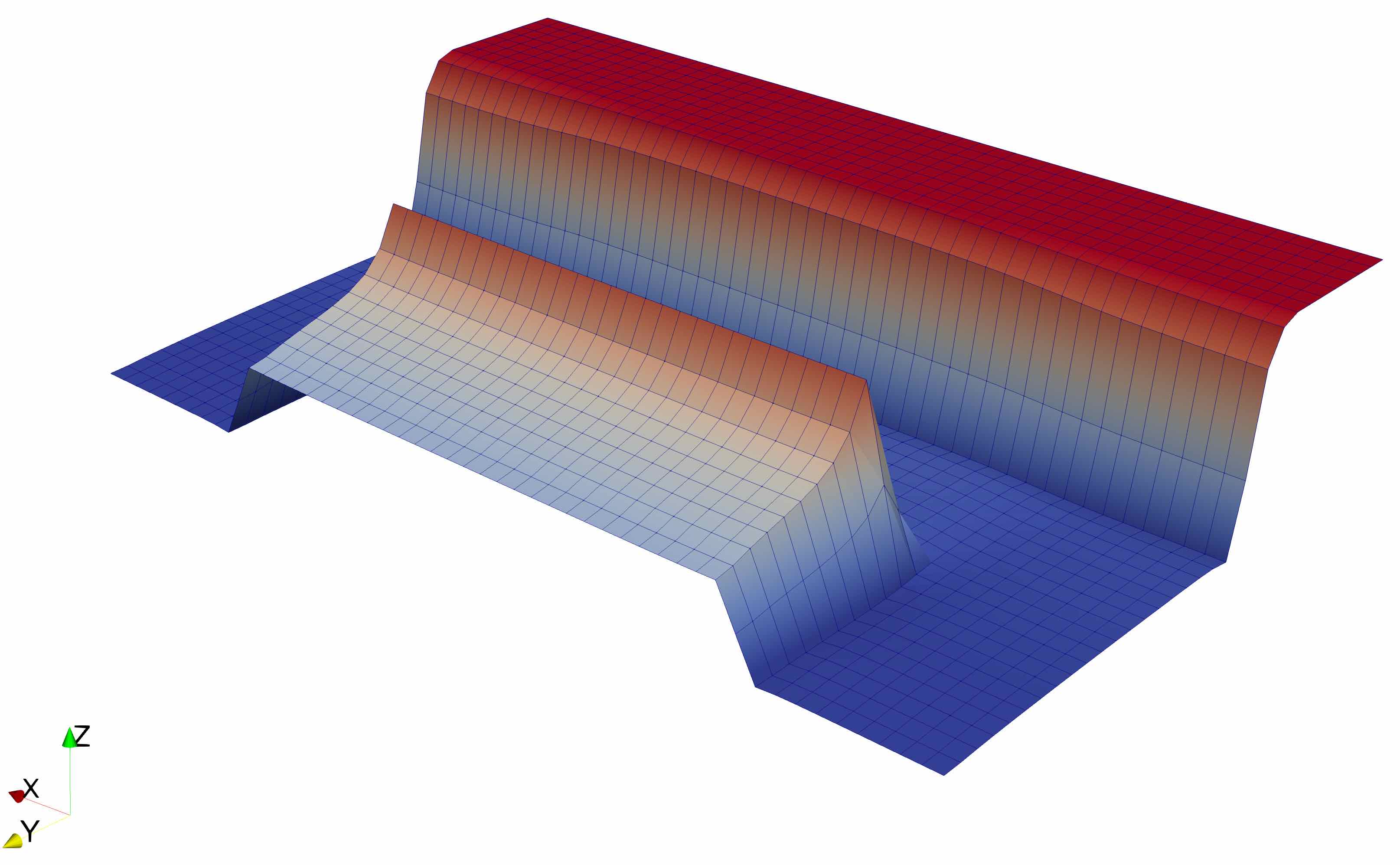}
        \includegraphics[width=4.4cm]{./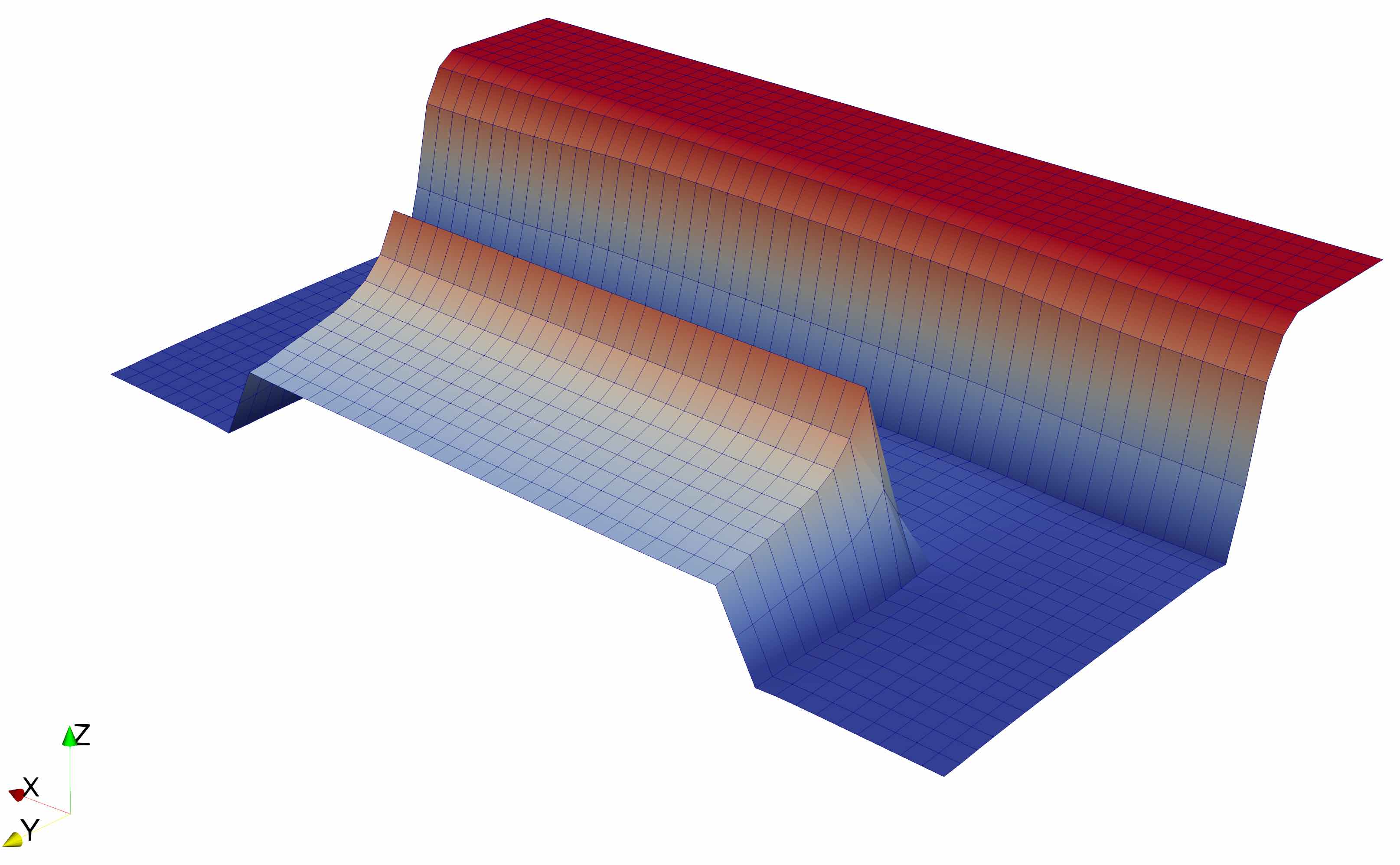}
        \captionof{figure}{Evolution of the saturation profile for $t\in\{0s,26.2\cdot 10^4s, 52.4\cdot 10^4s, 78.6\cdot10^4s, 105\cdot10^4s\}$ for drying case, using Brooks and Corey model, Method A and the $50\times30$ cells mesh.}
        \label{img:solutionDryBC}
        \end{center}
    \end{minipage}
    
\vspace{1cm}
    
    \begin{minipage}{0.05\textwidth}
    \vspace{-1.8cm}
        \includegraphics[height=5cm]{./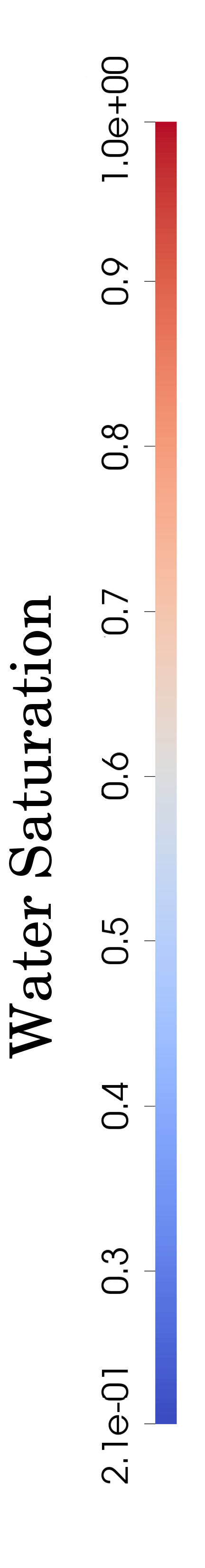}
    \end{minipage}
    \begin{minipage}{0.95\textwidth}
    \begin{center}
        \vspace{-1cm}
        \includegraphics[width=4.4cm]{./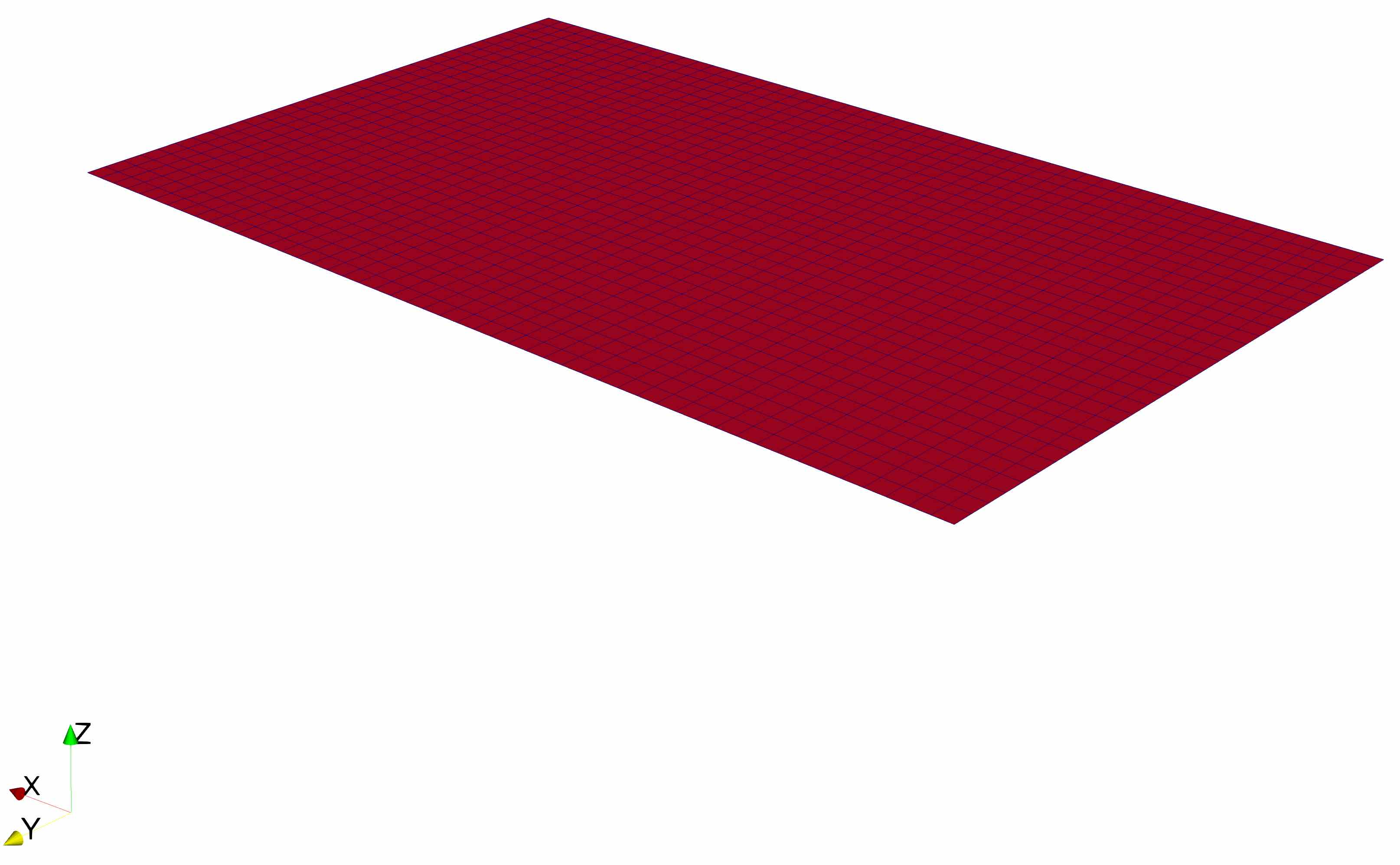}
        \includegraphics[width=4.4cm]{./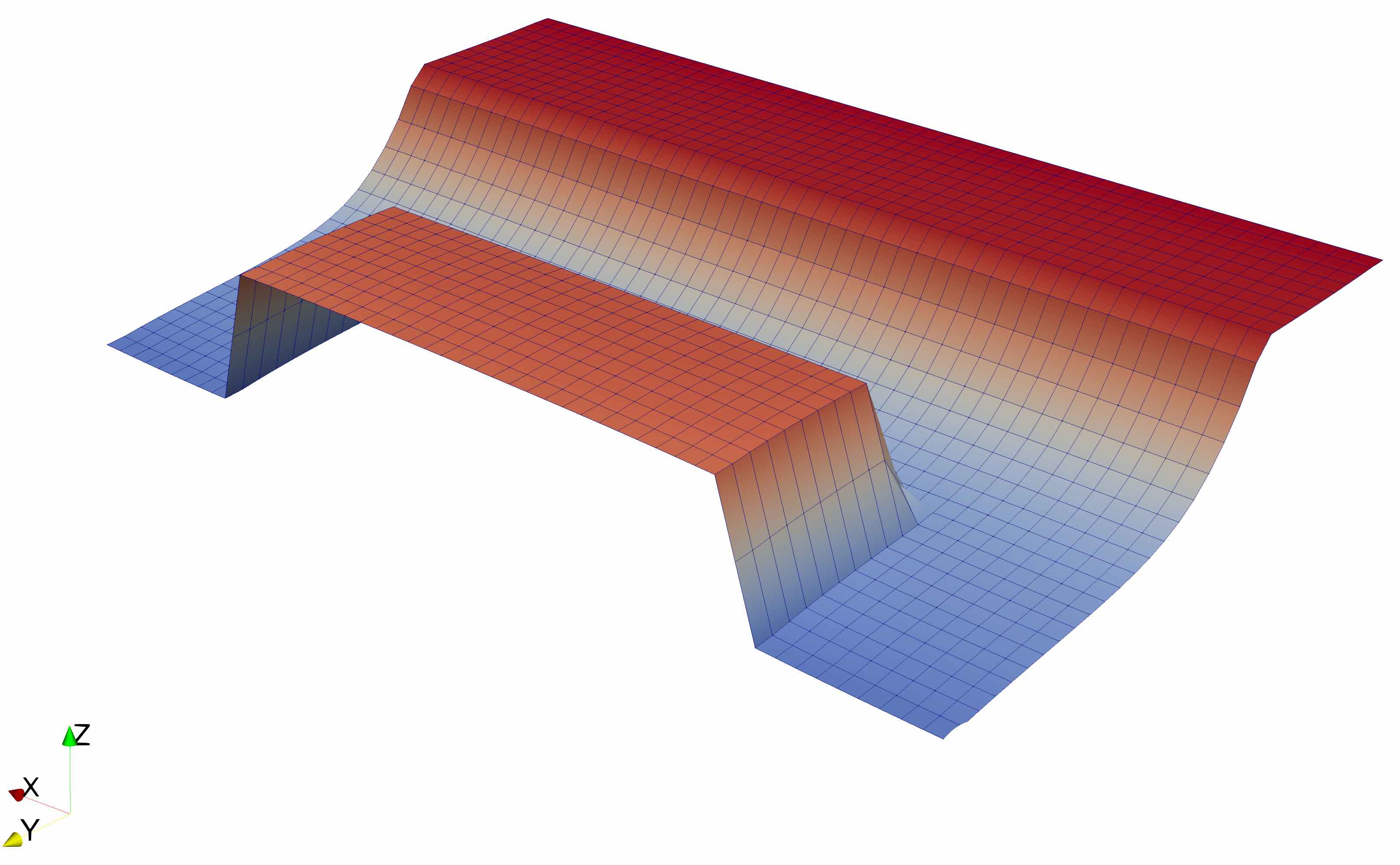}
        \includegraphics[width=4.4cm]{./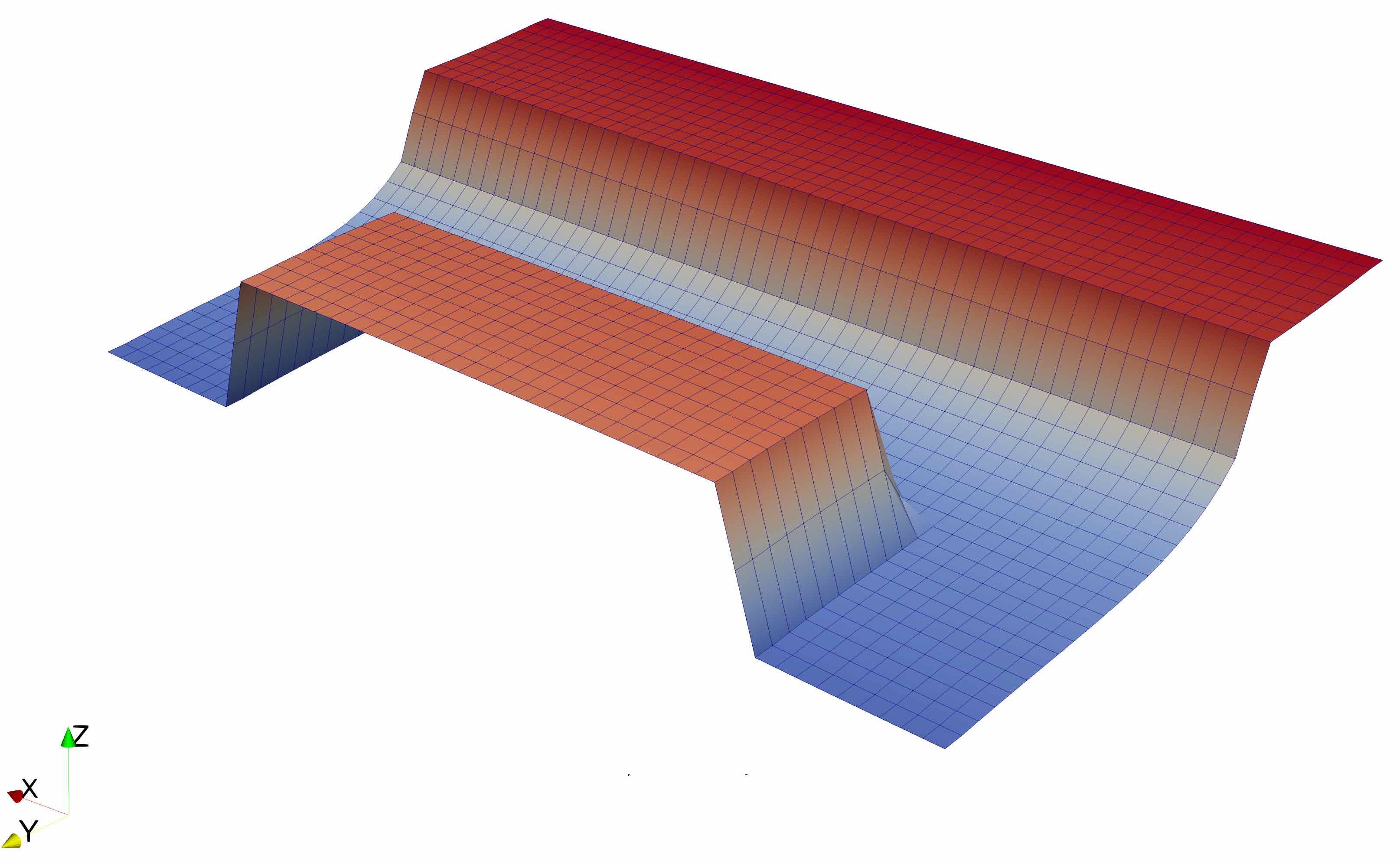}
        \includegraphics[width=4.4cm]{./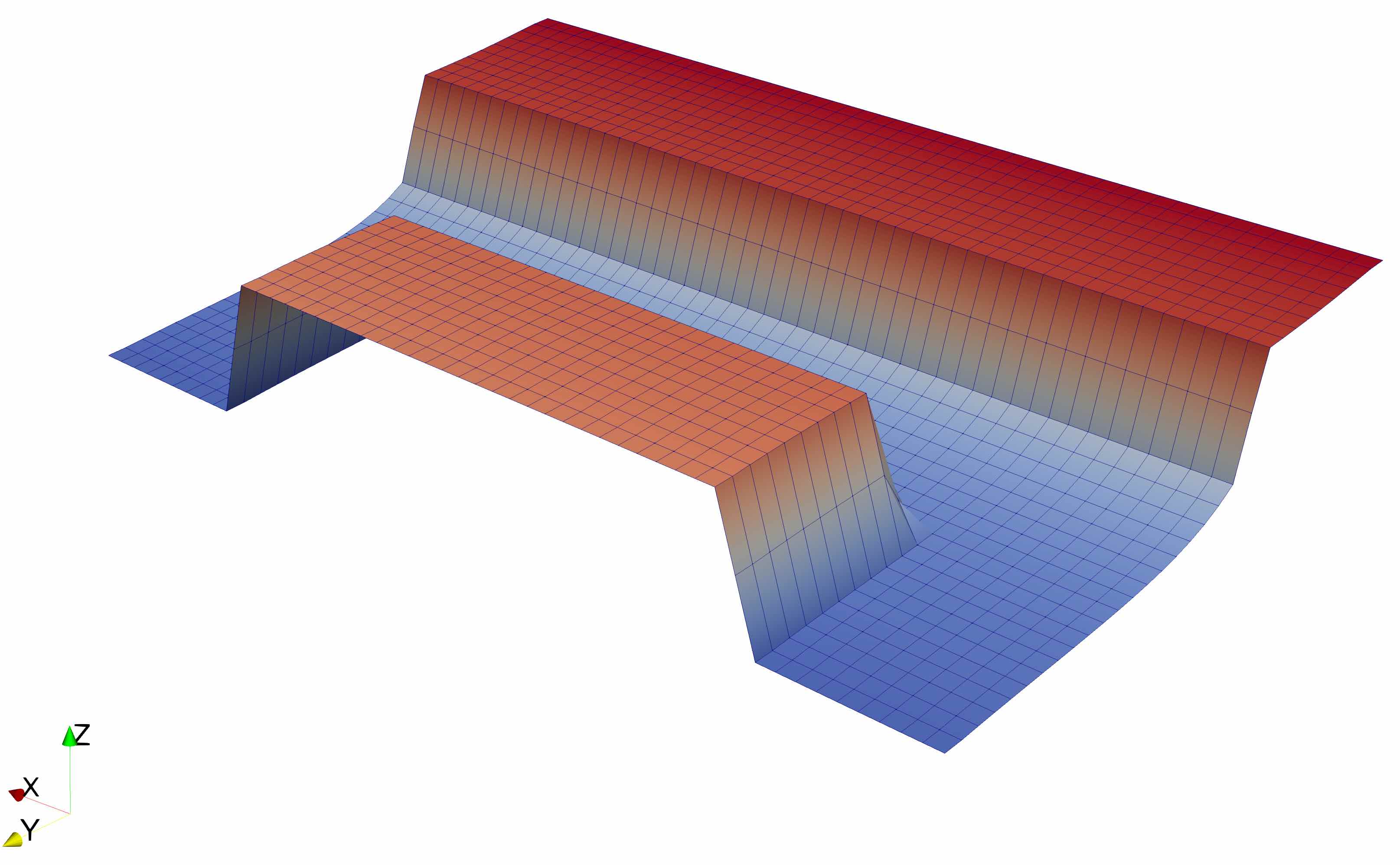}
        \includegraphics[width=4.4cm]{./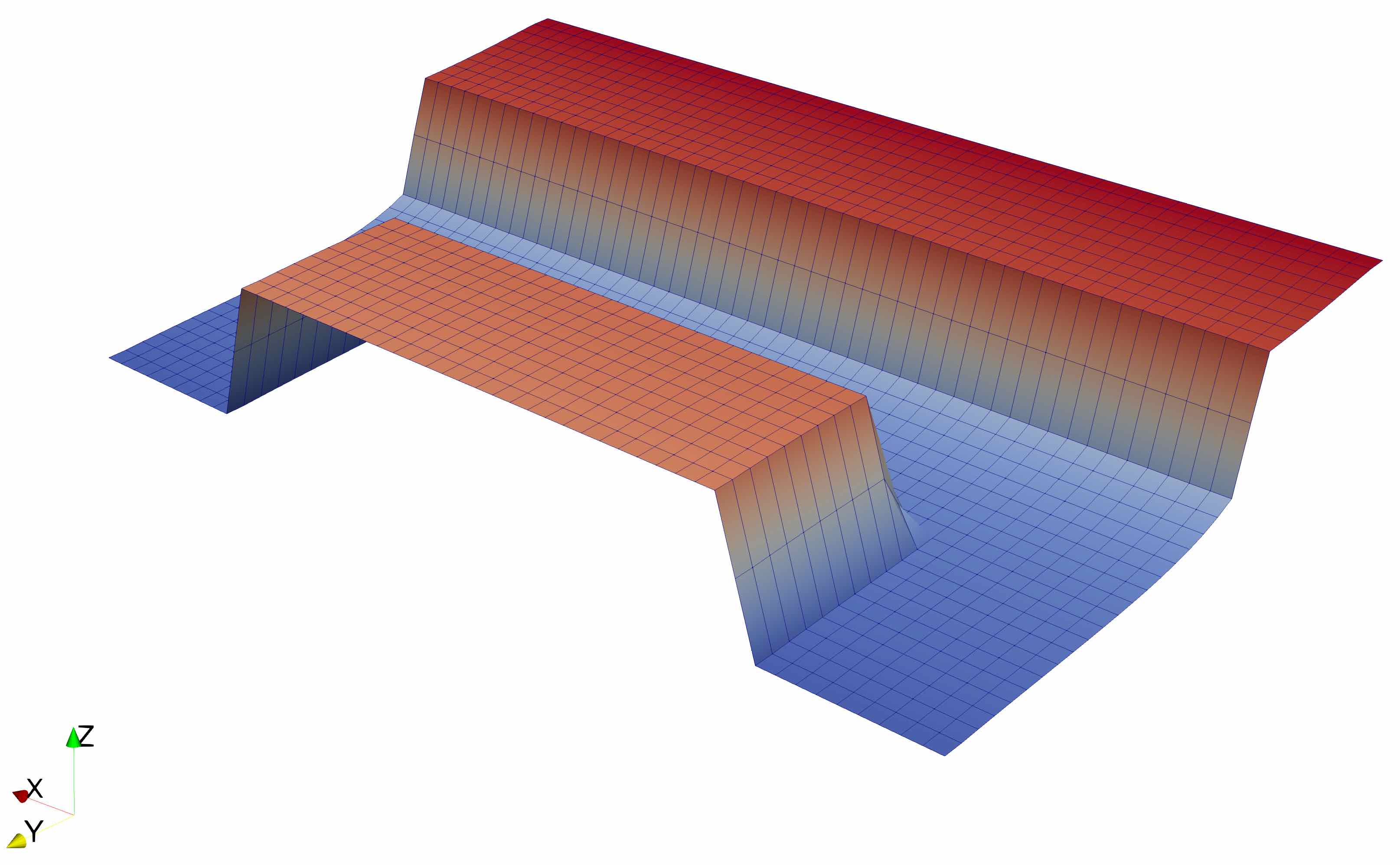}
        \captionof{figure}{Evolution of the saturation profile for $t\in\{0s,26.16\cdot 10^4s, 52.4\cdot 10^4s, 78.56\cdot10^4s, 105\cdot10^4s\}$  for drying case using Van Genuchten model, Method A and the $50\times30$ cells mesh.}
        \label{img:solutionDryVG}
        \end{center}
    \end{minipage}

\subsection{Comparisons {of the numerical treatments of the interfaces}}\label{ssec:num.results}
For each petro-physical model and configuration, a numerical convergence analysis is carried out for the schemes with (method A) or without (method B) thin cells, whose thickness is fixed to $\delta=10^{-6}\mathrm{m}$, at rock type interfaces. Five structured meshes with the following resolutions are considered for this analysis: $50 \times 30 $, $100 \times 60$, $200 \times 120$, 
$400 \times 240$, $800 \times 480$. The evolution of the error is measured using the $L^2([0,T], \Omega)$-norm of the relative difference between the saturations obtained on a given mesh and a reference solution obtained with Method A and the mesh $800\times 480$. The number of Newton iterations obtained with both methods is also compared.

\subsubsection{Brooks-Corey model: drainage case}
For the drainage case with the Brooks-Corey model, the convergence error is given in Figure \ref{img:convBCdry}. First we notice that, for all meshes, the error is smaller with method A than with method B and that we have a linear rate of convergence with the first one whereas this rate is smaller with the latter one.
The total, average and maximal number of Newton iterations are also given in Table \ref{table:newtonBCdry}. Method A appears to be slightly more expensive.

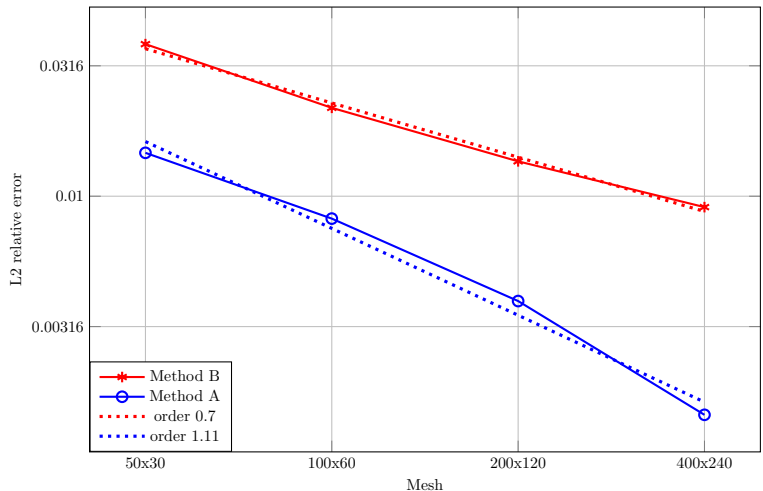
\begin{figure}[htb]
\centering
\begin{tikzpicture}
\begin{axis}[grid=major,
width=0.7\textwidth,
height=0.5\textwidth,
xlabel={Mesh},
ylabel={L2 relative error},
ymode=log,
xmode=log,
xticklabels={50x30, 100x60, 200x120, 400x240},
xtick={1500,6000,24000, 96000},
cycle list name=MyCyclelist1,
legend style={at={(0,0)},anchor=south west},nodes={scale=0.6, transform shape}]
\addplot table[x=mesh, y=L2]{errorBCdry.txt};
\addlegendentry{Method B };
\addplot table[x=mesh, y=L2]{errorBCdryV2.txt};
\addlegendentry{Method A };
\addplot[red,dotted,domain=15e+2:96e+3,line width= 1.2pt] {0.460214823/x^0.34553081};
\addlegendentry{order $0.7$};
\addplot[blue,dotted,domain=15e+2:96e+3,line width= 1.2pt] {0.92503662/x^0.553083133};
\addlegendentry{order $1.11$};
\end{axis}
\end{tikzpicture}
\caption{$L^2(Q_T)$ relative error in saturation for the drainage case using Brooks and Corey model.}\label{img:convBCdry}
\end{figure}

\begin{table}[htp!]
\centering
\begin{tabular}{p{1.6cm}p{1.4cm}p{1.4cm}p{1.2cm}}
\hline
 & $\sharp$ total  & $\sharp$ avg  & $\sharp$ max  \\
\hline
Method A & $2038$ & $3$ & $29$ \\
Method B & $1927$ & $3$ & $29$ \\
\hline\\
\end{tabular}
\caption{Newton's iterations for the mesh $200 \times 120$ for the drainage case using Brooks and Corey model.}
\label{table:newtonBCdry}
\end{table}

    Let us now evaluate the saturation absolute error between results obtained with Method A and Method B. In Figure \ref{img:errDryBC} we plot the absolute-error distribution over the domain at three different times: when the cells line in $\Omega_1$ above the interface between $\Omega_1$ and $\Omega_3$ starts drying,  when the cells line in $\Omega_2$ below the interface between $\Omega_3$ and $\Omega_2$ starts drying and at final time. \newline 
    \begin{minipage}{0.03\textwidth}
    \vspace{-1.2cm}
        \includegraphics[height=3cm]{./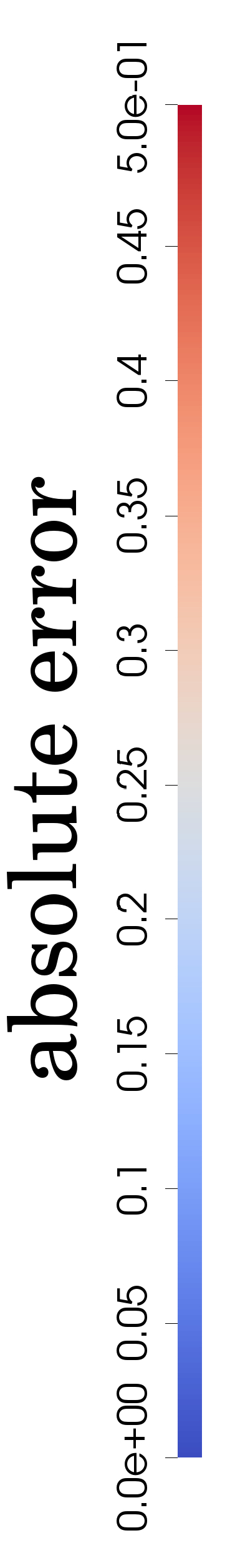}
    \end{minipage}
    \begin{minipage}{0.9\textwidth}
    \begin{center}
        \includegraphics[width=4.4cm]{./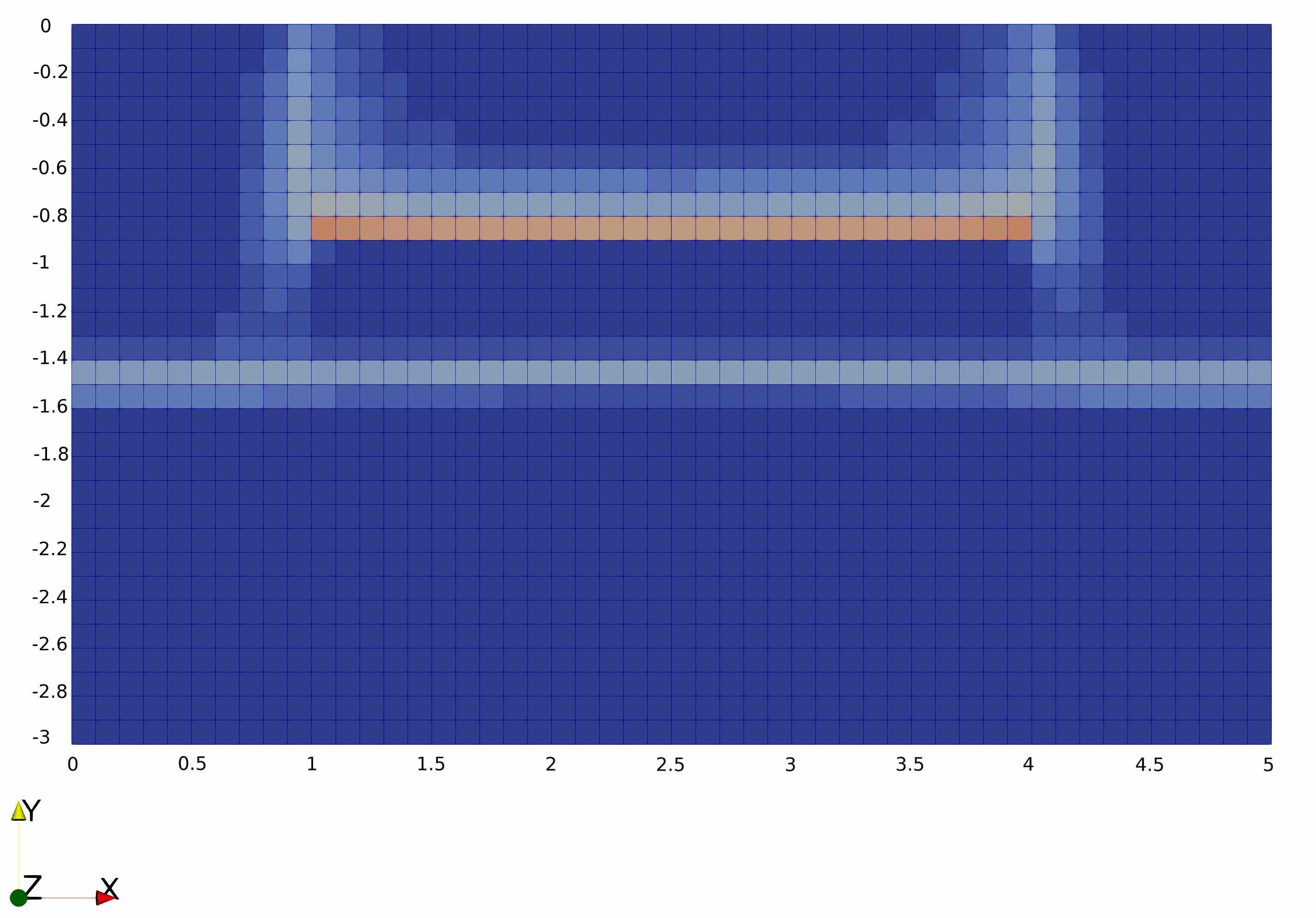}
        \includegraphics[width=4.4cm]{./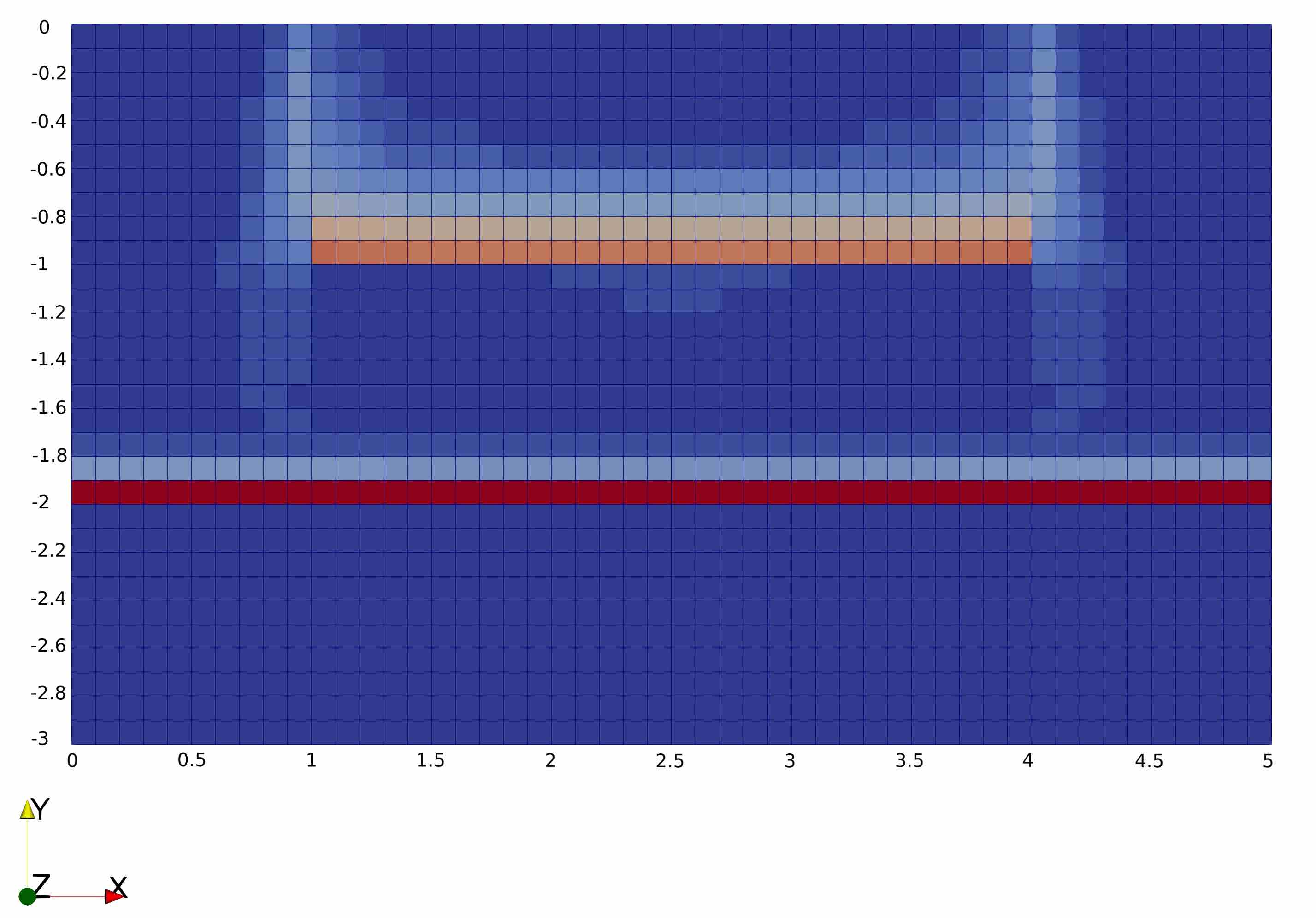}
        \includegraphics[width=4.4cm]{./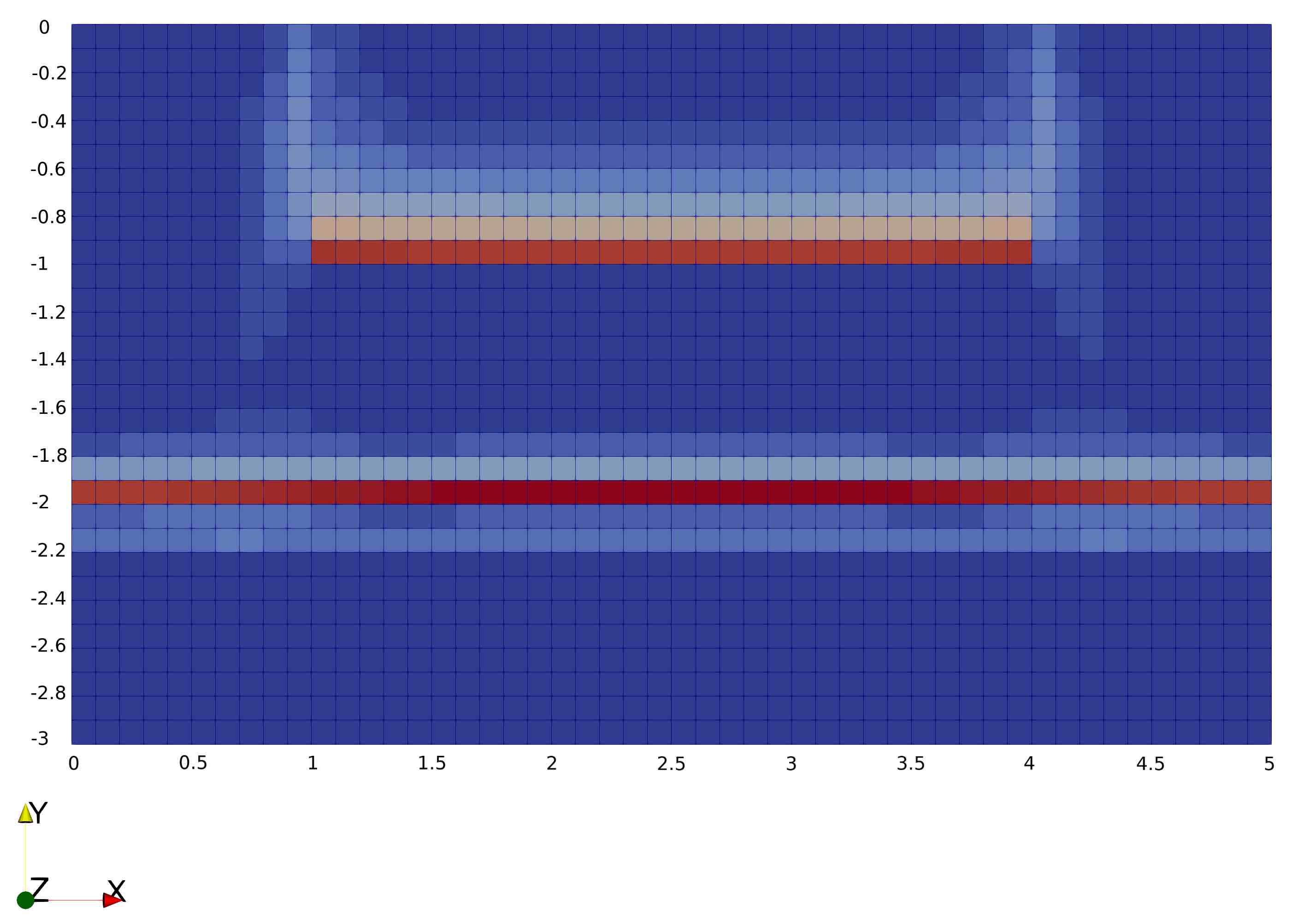}
        \captionof{figure}{Saturation absolute error between Method A and Method B for the drainage case with Brooks and Corey model at $t\in\{23\cdot 10^4s, 53.2\cdot 10^4s, 105\cdot 10^4s\}$.}
        \label{img:errDryBC}
    \end{center}
    \end{minipage}

\subsubsection{Brooks-Corey model: filling case}
For the filling case with the Brooks-Corey model, the convergence error is given in Figure \ref{img:convBCfill}. As for the previous case, Method A enables to recover a linear convergence rate. Except for the first two meshes where the error obtained with Method A is slightly larger, for all other meshes, this error is smaller than the one obtained with method B.
The total, average and maximal number of Newton iterations are given in Table \ref{table:newtonBCfill}. The algorithm behaves here in the same way as before.

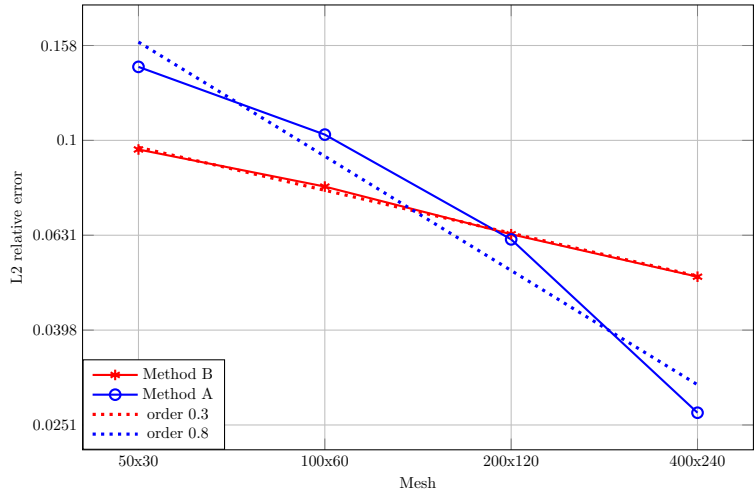
\begin{figure}[htp]
\centering
\begin{tikzpicture}
\begin{axis}[grid=major,
width=0.7\textwidth,
height=0.5\textwidth,
xlabel={Mesh},
ylabel={L2 relative error},
ymode=log,
xmode=log,
xticklabels={50x30, 100x60, 200x120, 400x240},
xtick={1500,6000,24000, 96000},
cycle list name=MyCyclelist1,
legend style={at={(0,0)},anchor=south west},nodes={scale=0.6, transform shape}]
\addplot table[x=mesh, y=L2]{errorBCfill.txt};
\addlegendentry{Method B };
\addplot table[x=mesh, y=L2]{errorBCfillv2.txt};
\addlegendentry{Method A };
\addplot[red,dotted,domain=15e+2:96e+3,line width= 1.2pt] {0.29025352/x^0.150333304};
\addlegendentry{order $0.3$};
\addplot[blue,dotted,domain=15e+2:96e+3,line width= 1.2pt] {3.0039635/x^0.399990833};
\addlegendentry{order $0.8$};
\end{axis}
\end{tikzpicture}
\caption{$L^2(Q_T)$ relative error in saturation for the filling case using Brooks and Corey model.}\label{img:convBCfill}
\end{figure}
\begin{table}[htp!]
\centering
\begin{tabular}{p{1.6cm}p{1.4cm}p{1.4cm}p{1.2cm}}
\hline
 & $\sharp$ total  & $\sharp$ avg  & $\sharp$ max  \\
\hline
Method A & $788$ & $9$ & $32$ \\
Method B & $659$ & $7$ & $31$ \\
\hline\\
\end{tabular}
\caption{Newton's iterations for the mesh $200 \times 120$ for the filling case using Brooks and Corey model.}
\label{table:newtonBCfill}
\end{table}

    Let us now evaluate the saturation absolute error between results obtained with Method A and Method B. In Figure \ref{img:errFillBC} we plot the absolute-error distribution over the domain at three different times: when water crosses the {interface} between $\Omega_1$ and $\Omega_3$, when cells around this interface are almost saturated and at final time. \newline
       \begin{minipage}{0.03\textwidth}
    \vspace{-1.2cm}
        \includegraphics[height = 3.5cm]{./img/satAbsErrorFillBC/leg.jpg}
    \end{minipage}
    \begin{minipage}{0.9\textwidth}
    \begin{center}
        \includegraphics[width=4.4cm]{./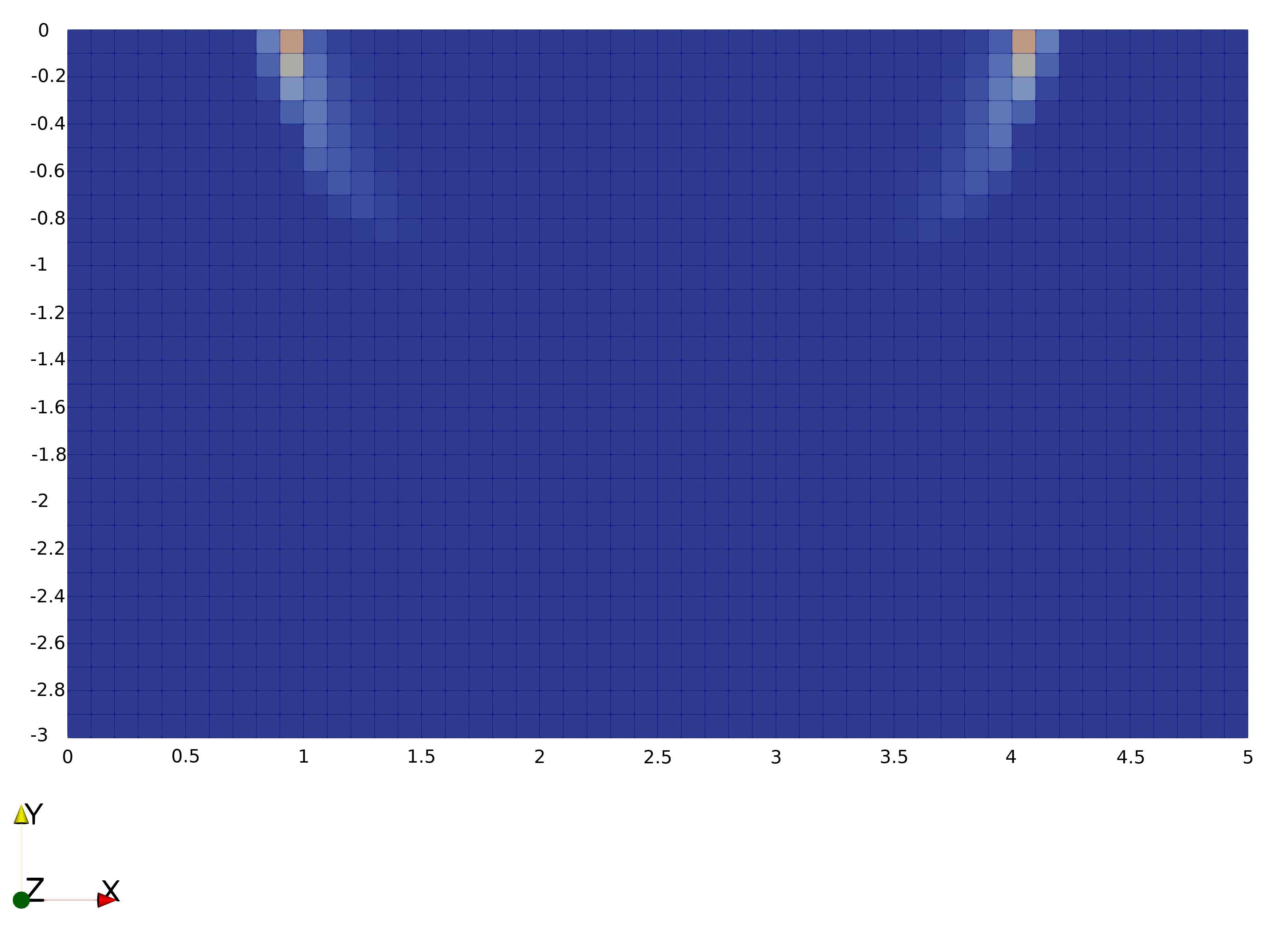}
        \includegraphics[width=4.4cm]{./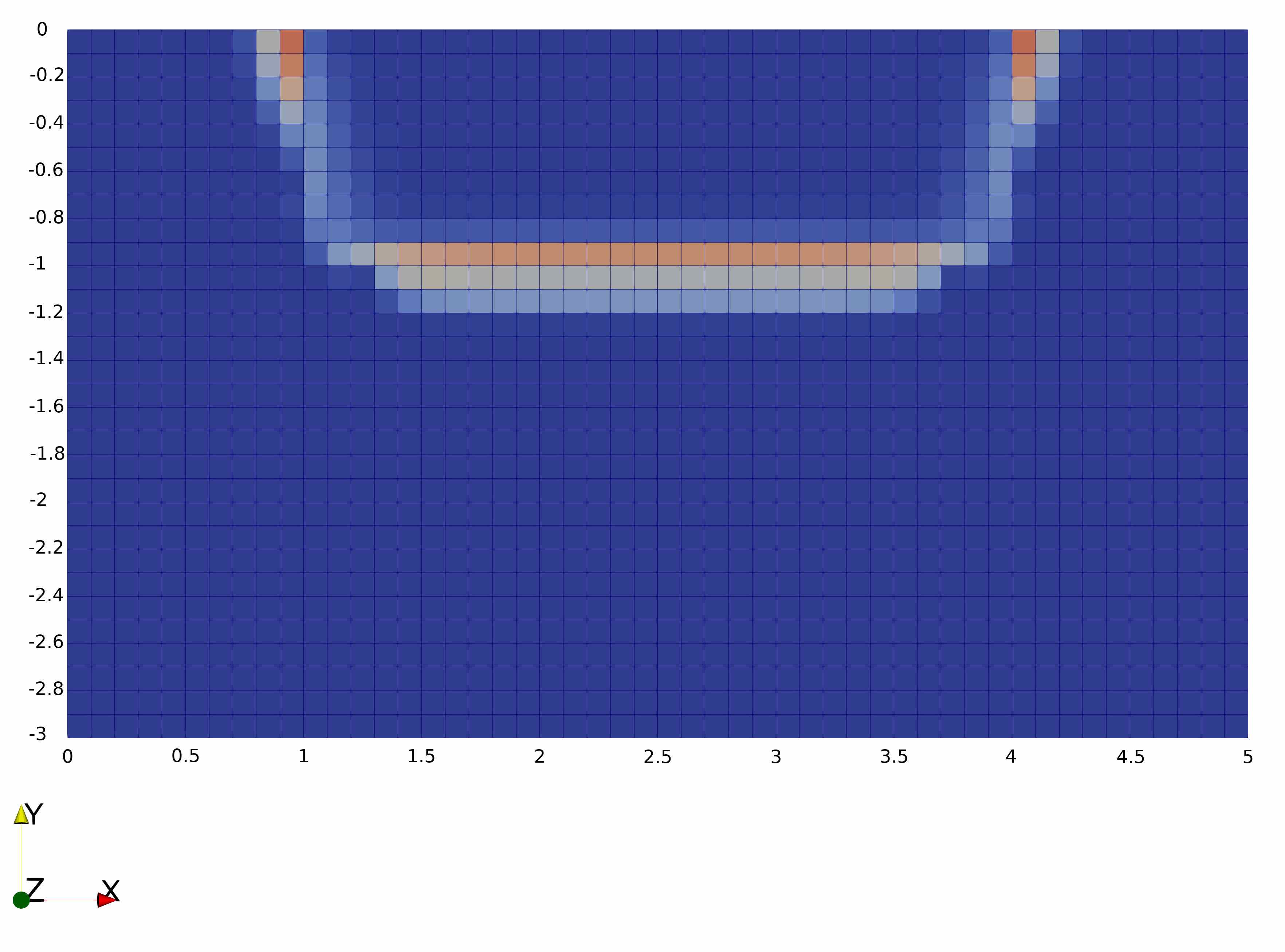}
        \includegraphics[width=4.4cm]{./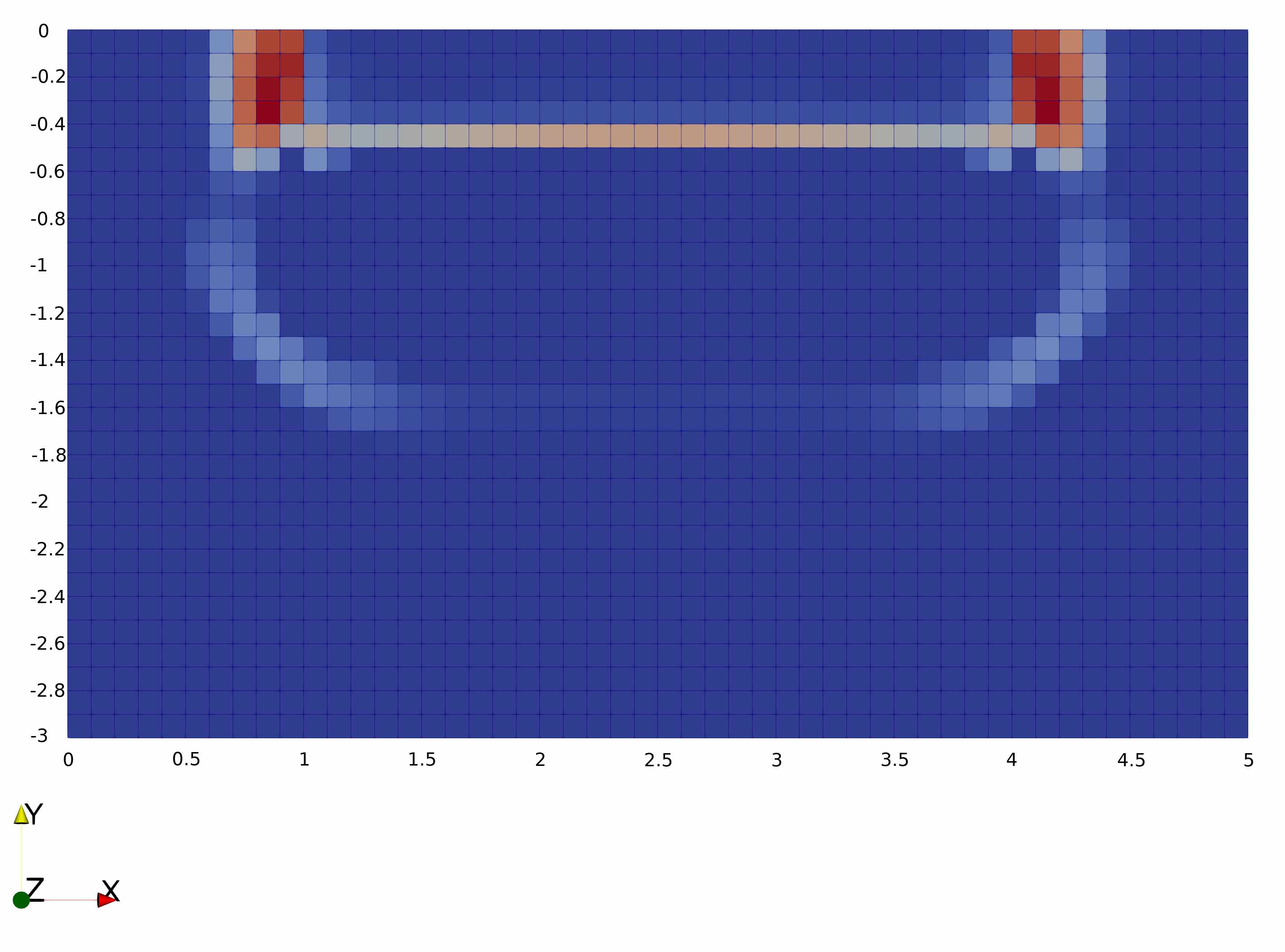}
        \captionof{figure}{Saturation absolute error between Method A and Method B for the filling case with Brooks and Corey model at $t\in\{20\cdot 10^3s, 30\cdot 10^3s, 86\cdot 10^3s\}$.}
        \label{img:errFillBC}
        \end{center}
    \end{minipage}

\subsubsection{Van Genuchten-Mualem model: filling case}
For the filling case with the Van Genuchten model, the convergence error is given in Figure \ref{img:convVGfill}. Both methods exhibit a linear rate of convergence.
On the other hand, the error is slightly larger with method A than with method B.
The total, average and maximal number of Newton iterations are given in Table \ref{table:newtonVGfill}.

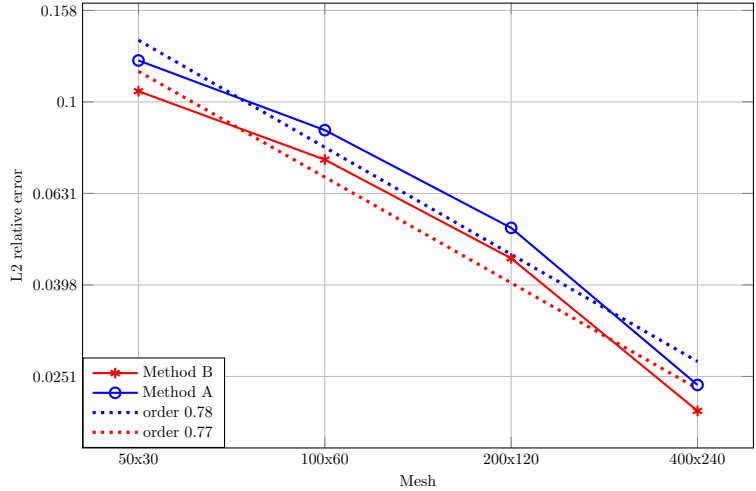
\begin{figure}[htp]
\centering
\begin{tikzpicture}
\begin{axis}[grid=major,
width=0.7\textwidth,
height=0.5\textwidth,
xlabel={Mesh},
ylabel={L2 relative error},
ymode=log,
xmode=log,
xticklabels={50x30, 100x60, 200x120, 400x240},
xtick={1500,6000,24000, 96000},
cycle list name=MyCyclelist1,
legend style={at={(0,0)},anchor=south west},nodes={scale=0.6, transform shape}]
\addplot table[x=mesh, y=L2]{errorVGfill.txt};
\addlegendentry{Method B };
\addplot table[x=mesh, y=L2]{errorVGfillv2.txt};
\addlegendentry{Method A };
\addplot[blue,dotted,domain=15e+2:96e+3,line width= 1.2pt] {2.3431041/(x)^0.388801424};
\addlegendentry{order $0.78$}; 
\addplot[red,dotted,domain=15e+2:96e+3,line width= 1.2pt] {1.9330889/(x)^0.383939001};
\addlegendentry{order $0.77$}; 
\end{axis}
\end{tikzpicture}
\caption{$L^2(Q_T)$ relative error in saturation for the filling case using the Van Genuchten model.}\label{img:convVGfill}
\end{figure}

\begin{table}[htp!]
\centering
\begin{tabular}{p{1.6cm}p{1.4cm}p{1.4cm}p{1.2cm}}
\hline
 & $\sharp$ total  & $\sharp$ avg  & $\sharp$ max  \\
\hline
Method A & $959$ & $5$ & $15$ \\
Method B & $782$ & $4$ & $15$ \\
\hline\\
\end{tabular}
\caption{Newton's iterations for the mesh $200 \times 120$ for the filling case using the Van Genuchten model.}
\label{table:newtonVGfill}
\end{table}

{Figure~\ref{img:errFillVG} shows the localization of the difference for the numerical solutions provided by the two methods A and B. Unsurprisingly, the difference is located in the neighborhood of the interfaces. Moreover, as suggested by Figures~\ref{img:convBCfill} and~\ref{img:convVGfill}, the influence of the introduction of additional interface unknowns (method A) has a lower impact for van Genuchten-Mualem nonlinearities than for Brook-Corey nonlinearities.}

   \begin{minipage}{0.03\textwidth}
    \vspace{-1.2cm}
        \includegraphics[height=3cm]{./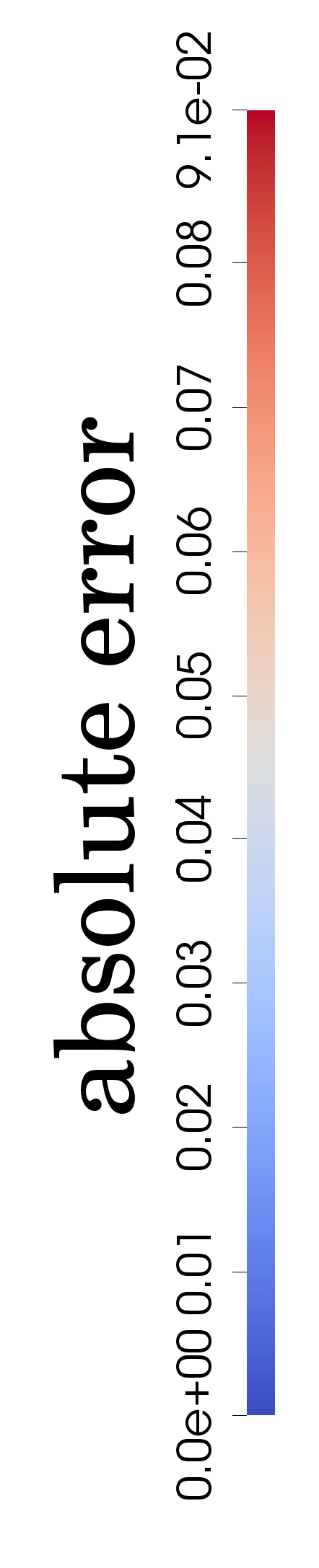}
    \end{minipage}
    \begin{minipage}{0.9\textwidth}
    \begin{center}
        \includegraphics[width=4.4cm]{./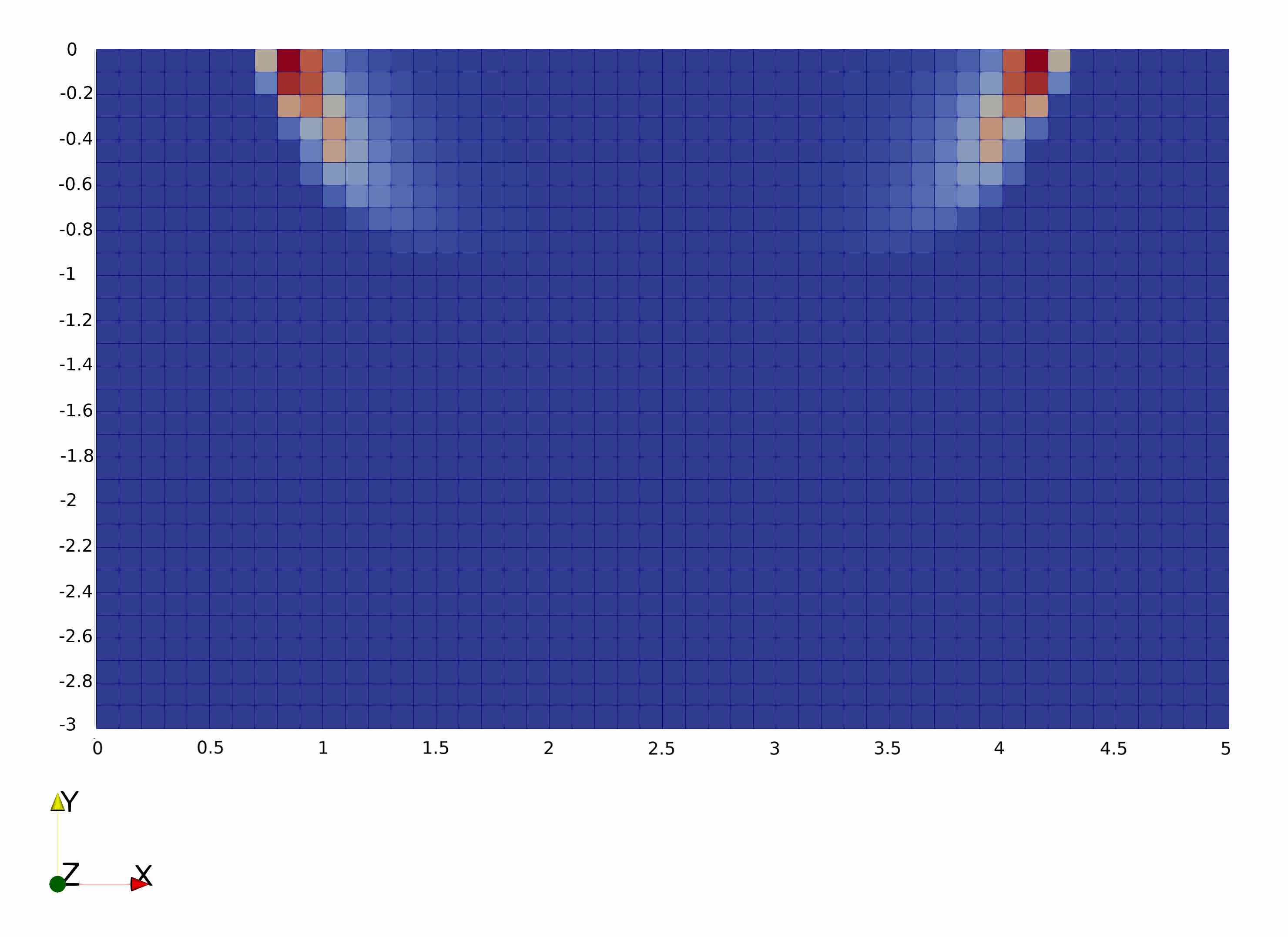}
        \includegraphics[width=4.4cm]{./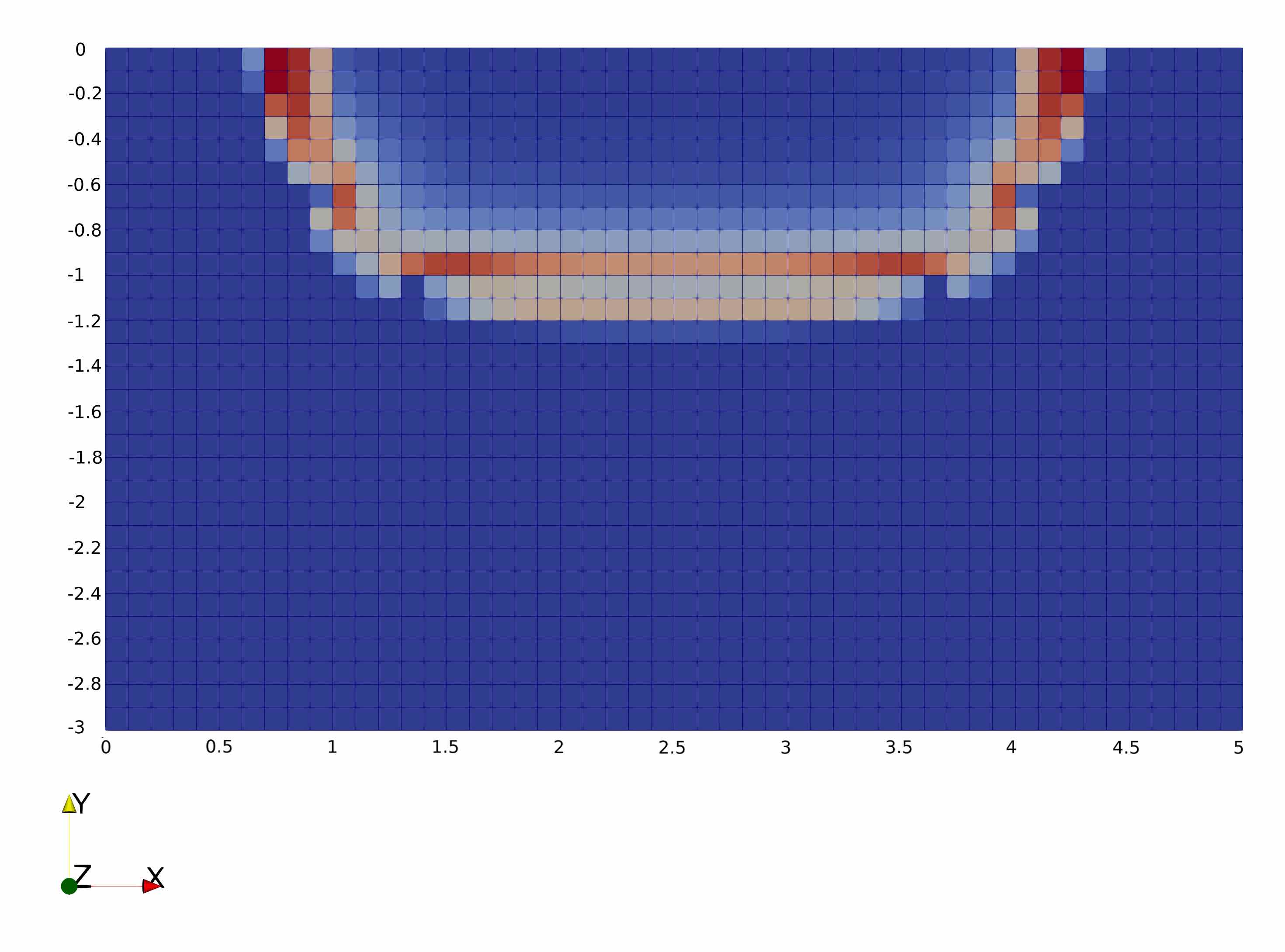}
        \includegraphics[width=4.4cm]{./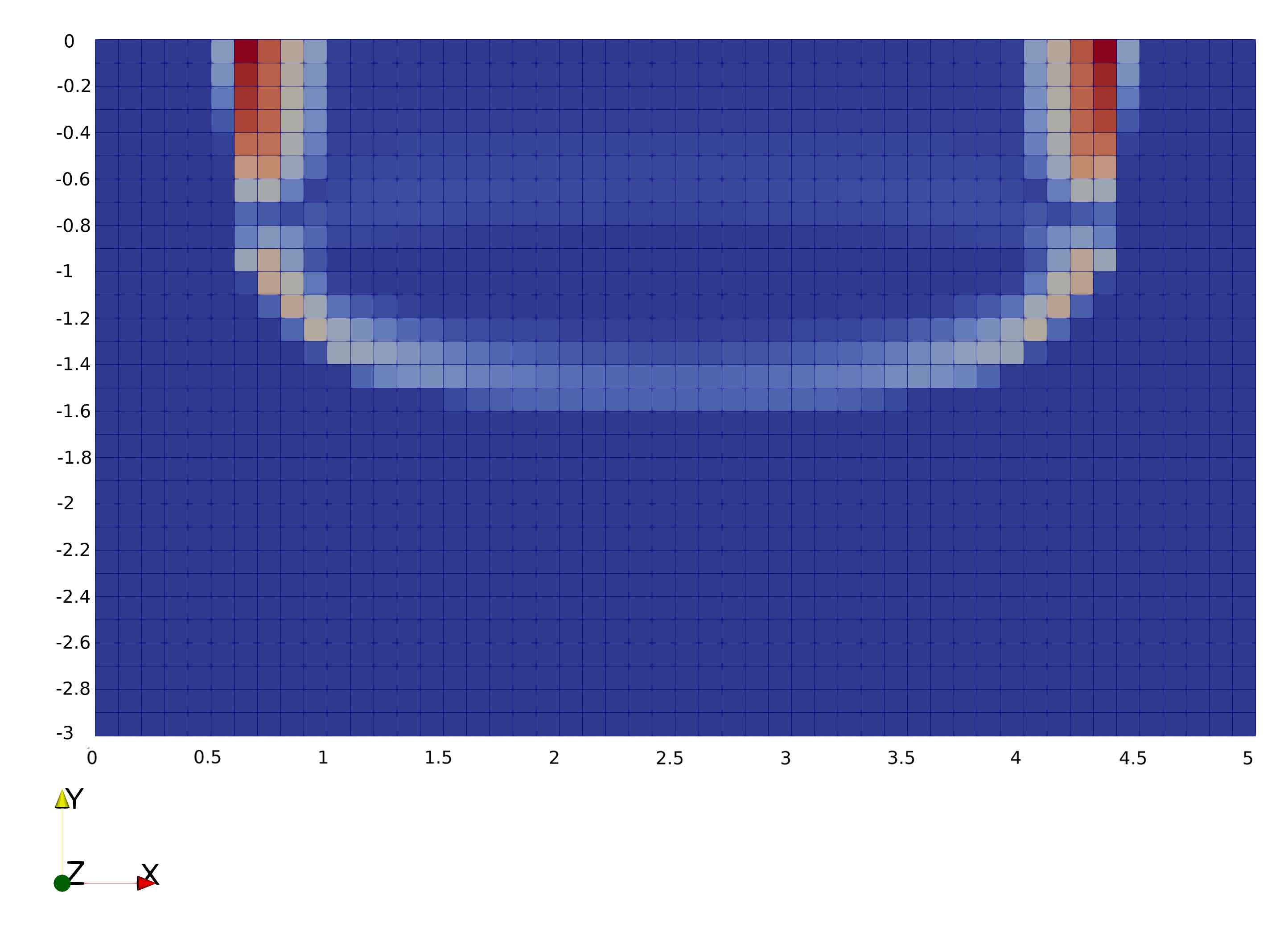}
        \captionof{figure}{Saturation absolute error between Method A and Method B for the filling case with the Van Genuchten model at $t\in\{27.5\cdot 10^3s, 45\cdot 10^3s, 86\cdot 10^3s\}$.}
        \label{img:errFillVG}
        \end{center}
    \end{minipage}

\subsubsection{Van Genuchten-Mualem model: drainage case}
For the drainage case with the Van Genuchten model, the convergence error is given in Figure \ref{img:convVGdry}. Both methods exhibit a linear rate of convergence.
Moreover, the error is slightly larger with method B than with method A.
The total, average and maximal number of Newton iterations are given in Table \ref{table:newtonVGdry}.

\begin{figure}[htp]
\centering
\begin{tikzpicture}
\begin{axis}[grid=major,
width=0.7\textwidth,
height=0.5\textwidth,
xlabel={Mesh},
ylabel={L2 relative error},
ymode=log,
xmode=log,
xticklabels={50x30, 100x60, 200x120, 400x240},
xtick={1500,6000,24000, 96000},
cycle list name=MyCyclelist1,
legend style={at={(0,0)},anchor=south west},nodes={scale=0.6, transform shape}]
\addplot table[x=mesh, y=L2]{errorVGdry.txt};
\addlegendentry{Method B };
\addplot table[x=mesh, y=L2]{errorVGdryv2.txt};
\addlegendentry{Method A };
\addplot[blue,dotted,domain=15e+2:96e+3,line width= 1.2pt] {0.74683451/(x)^0.637271218};
\addlegendentry{order $1.27$}; 
\addplot[red,dotted,domain=15e+2:96e+3,line width= 1.2pt] {0.72299086/(x)^0.528197755};
\addlegendentry{order $1.05$}; 
\end{axis}
\end{tikzpicture}
\caption{$L^2(Q_T)$ relative error in saturation for the drainage case using the Van Genuchten model.}\label{img:convVGdry}
\end{figure}
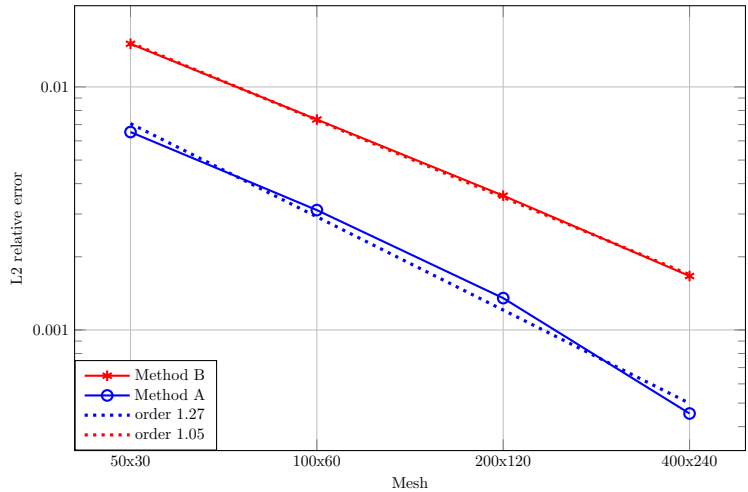

\begin{table}[htp!]
\centering
\begin{tabular}{p{1.6cm}p{1.4cm}p{1.4cm}p{1.2cm}}
\hline
 & $\sharp$ total  & $\sharp$ avg  & $\sharp$ max  \\
\hline
Method A & $3523$ & $2$ & $20$ \\
Method B & $2845$ & $2$ & $29$ \\
\hline\\
\end{tabular}
\caption{Newton's iterations for the mesh $200 \times 120$ for the filling case using the Van Genuchten model.}
\label{table:newtonVGdry}
\end{table}

    Let us now evaluate the saturation absolute error between results obtained with Method A and Method B. In Figure \ref{img:errDryVG} we plot the absolute-error distribution over the domain at three different times: when the cells line in $\Omega_1$ above the interface between $\Omega_1$ and $\Omega_3$ starts drying,  when the cells line in $\Omega_2$ below the interface between $\Omega_3$ and $\Omega_2$ starts drying and at final time. \newline
    \begin{figure}
    \centering
        \includegraphics[width=4.4cm]{./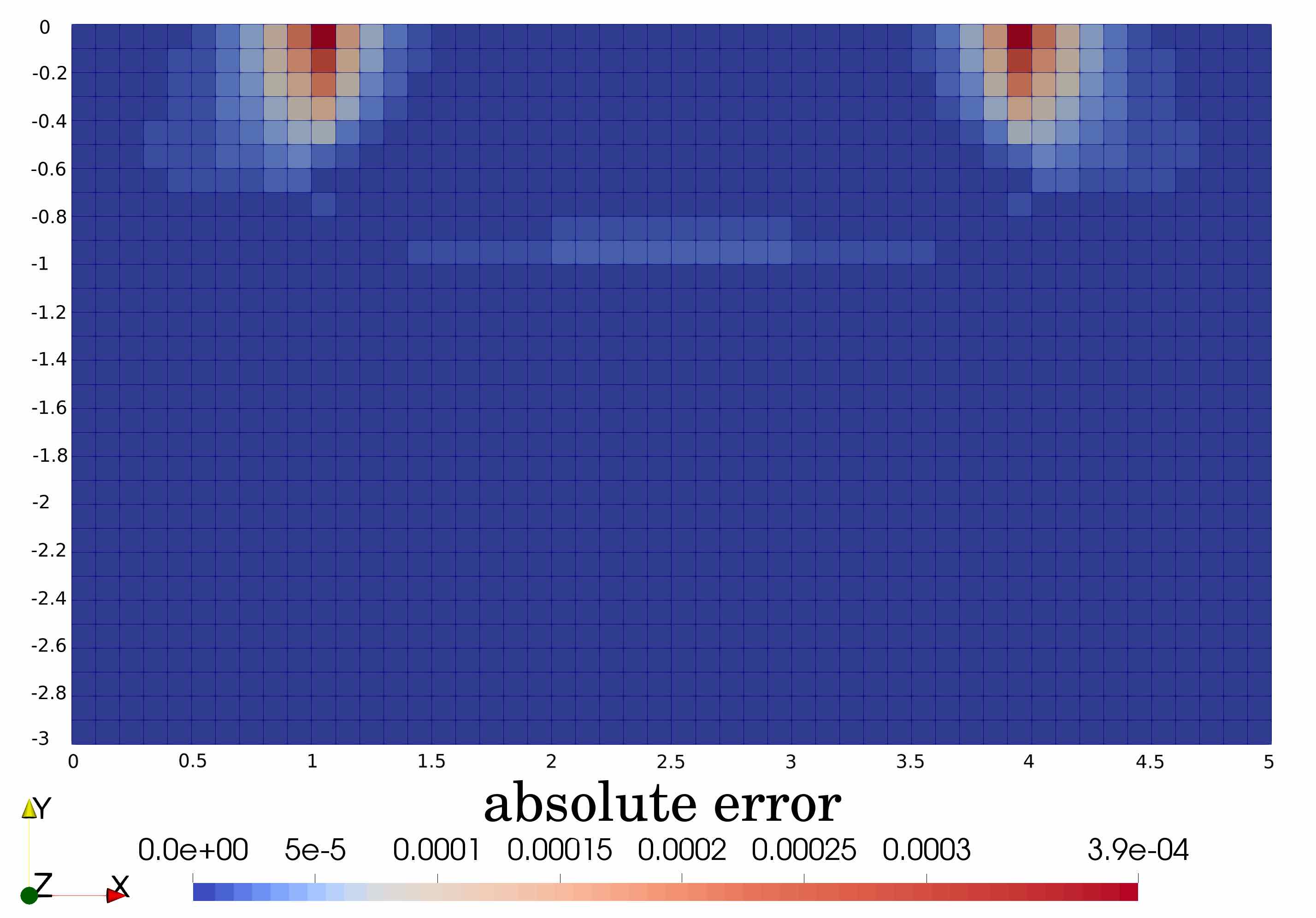}
        \includegraphics[width=4.4cm]{./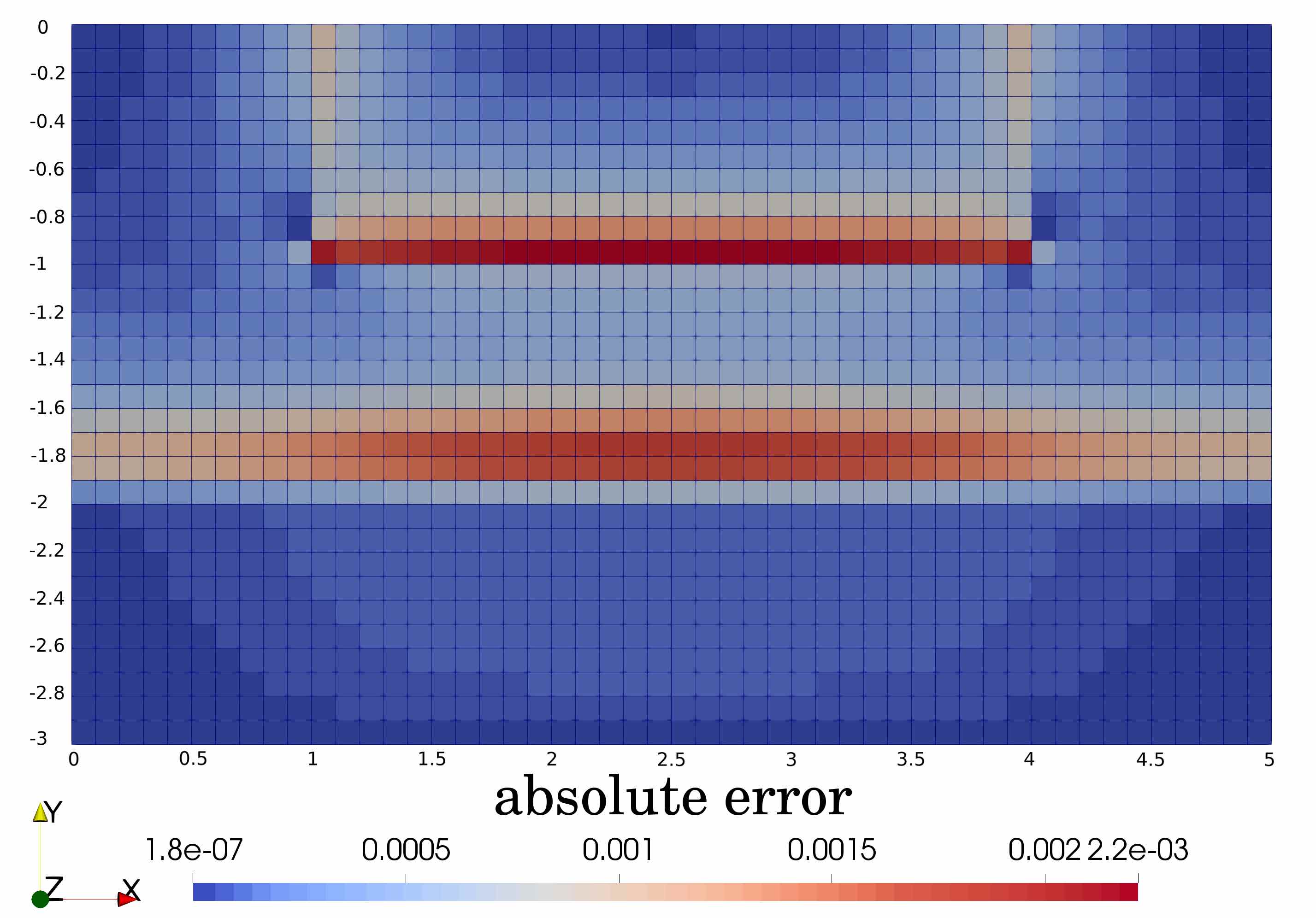}
        \includegraphics[width=4.4cm]{./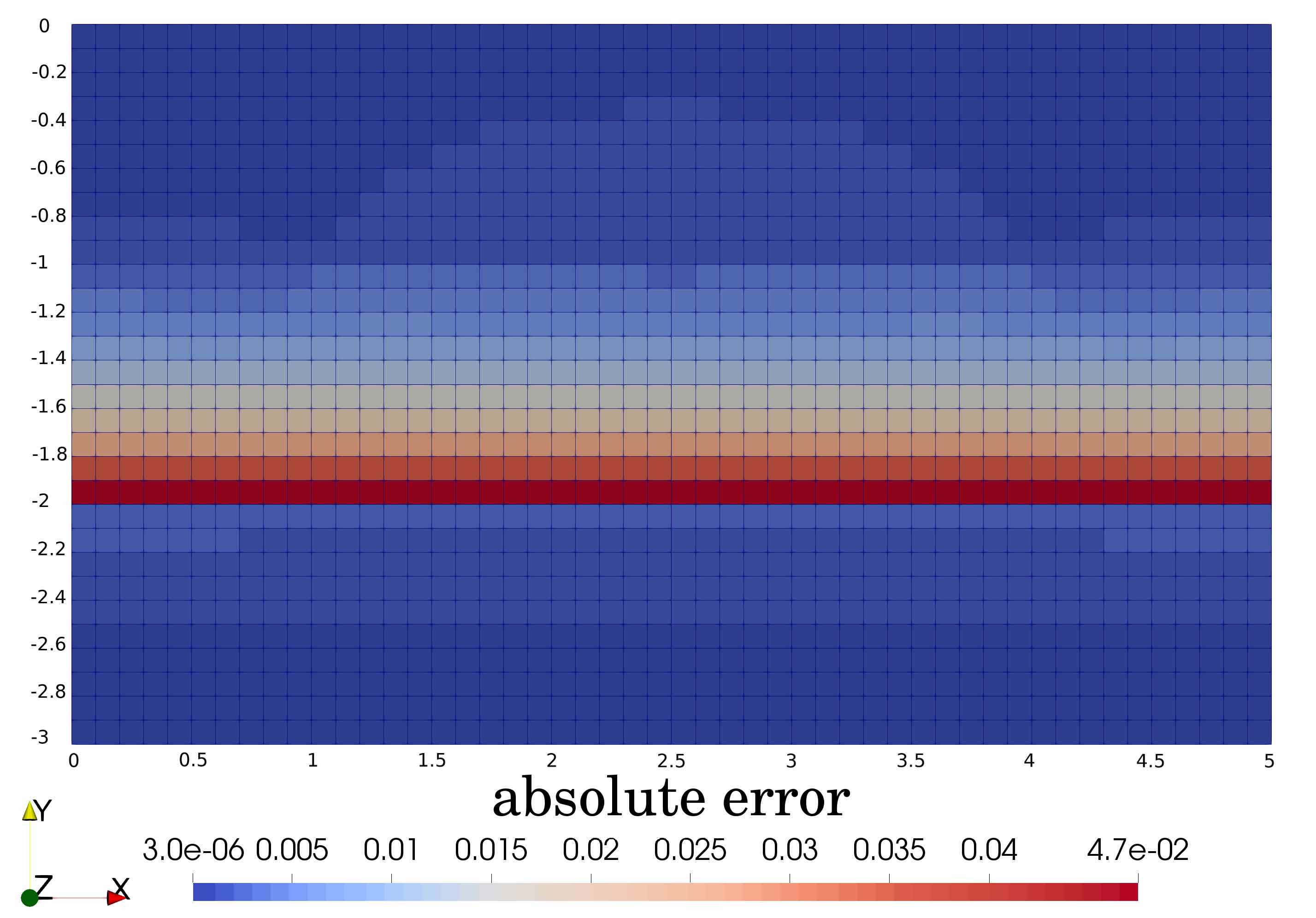}
        \captionof{figure}{Saturation absolute error between Method A and Method B for the drainage case with the van Genuchten-Mualem model at $t\in\{1.12\cdot 10^4s, 10.56\cdot 10^4s, 105\cdot 10^4s\}$.}
        \label{img:errDryVG}
    \end{figure}

\subsubsection{{Influence of the parameter $\delta$}}

Let us now analyze how the thickness of the thin cells employed in Method A affects the accuracy of the solution obtained with this method. We consider the filling and drainage cases along with the Brooks and Corey model and evaluate the relative {$L^2(Q_T)$} error between the solution obtained on the $200\times120$ cells mesh using $\delta\in\{10^{-2}m,10^{-4}m,10^{-6}m\}$ with respect to the reference solution obtained on the $800\times480$ cells mesh using $\delta_{ref}=10^{-6}m$. {As shown in Figure \ref{img:convVirtualCellBC},  the value of $\delta$ does not have a significant influence on the overall error as soon as $\delta$ is small enough. We also observe a moderate influence on the robustness of the non-linear solver for the values considered here.}

\begin{figure}[htp]
\centering
\begin{tikzpicture}
\begin{axis}[grid=major,
width=0.7\textwidth,
height=0.3\textwidth,
xlabel={Thickness of thin cells, Method A},
ylabel={L2 relative error},
ymode=log,
xmode=log,
xticklabels={1e-6, 1e-4, 1e-2},
xtick={1e-6, 1e-4, 1e-2},
cycle list name=MyCyclelist1,
legend style={at={(1,0)},anchor=south east},nodes={scale=0.6, transform shape}]
\addplot table[x=delta, y=L2]{compErrorBCfill.txt};
\addlegendentry{Filling case};
\addplot table[x=delta, y=L2]{compErrBCdry.txt};
\addlegendentry{drainage case};
\end{axis}
\end{tikzpicture}
\caption{$L^2(Q_T)$ relative error in saturation as a function of the thickness $\delta$ of the thin cells with Method A using the $200\times120$ cells mesh.}\label{img:convVirtualCellBC}
\end{figure}
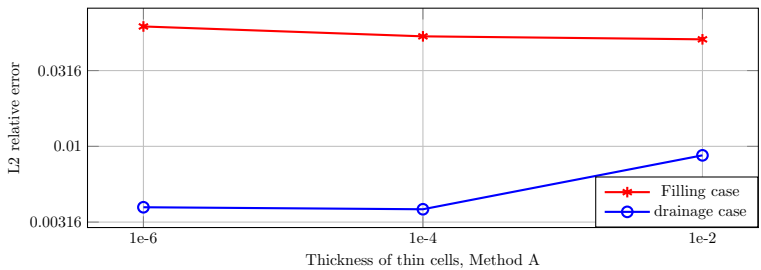

\section{Conclusions and perspectives}

{This article aimed at proving that standard upstream mobility finite volume schemes for variable saturated porous media flows still converge in highly heterogeneous contexts without any specific treatment of the rock type discontinuities. The scheme is indeed shown to satisfy some energy stability which provides enough a priori estimates to carry out its numerical analysis. First, the existence of a unique solution to the nonlinear system stemming from the scheme is established thanks to a topological degree argument and from the monotonicity of the scheme. Besides, a rigorous mathematical convergence proof is conducted, based on compactness arguments. No error estimate can then be deduced from our analysis.}

{Because of the choice of a backward Euler in time discretization and from the upwind choice of the mobilities, a first order in time and space accuracy is expected in the case of homogeneous computational domains. We show in numerical experiments that without any particular treatment of the interfaces at rock discontinuities, this first order accuracy can be lost, especially in the case of Brooks-Corey nonlinearities. This motivates the introduction of a specific treatment of the interfaces. The approach we propose here is based on the introduction of additional unknowns located in fictitious small additional cells on both sides of each interface. Even though the rigorous convergence proof of this approach is not provided here in the multidimensional setting -- such a proof can for instance be done by writing the scheme with the specific treatment of the interface (method A) as a perturbation of the scheme without any particular treatment of the interface (method B)--, the numerical experiments show that it allows to recover the first order accuracy without having major impacts on the implementation and on the behavior of the numerical solver.}

For future researches, we suggest to test the so-called method A on a two-phase flow test {and to compare it to the approaches presented in~\cite{BDMQ_HAL}}. Moreover, in a forthcoming work, we propose two other methods to really impose the pressure continuity condition at interfaces. A comparison between all methods will be shown.

{\small

}

\end{document}